\Crefname{paragraph}{Section}{Sections}
\newcommand{\ensemblenombre}[1]{\mathbb{#1}}
\newcommand{\N}{\ensemblenombre{N}}
\newcommand{\R}{} 
\renewcommand{\R}{\ensemblenombre{R}}
\newcommand{\norme}[1]{\left\lVert#1\right\rVert}
\renewcommand{\=}{\mathop{=} \limits}
\theoremstyle{plain} 
\newtheorem{prop}{Proposition}[section] 
\newtheorem{theo}[prop]{Theorem}
\newtheorem{lem}[prop]{Lemma}
\newtheorem{cor}[prop]{Corollary}
\theoremstyle{definition}
\newtheorem{defi}[prop]{Definition}
\newtheorem{rmk}[prop]{Remark}
\newtheorem{app}[prop]{Application}
\newtheorem{claim}[prop]{Fact}
\let\original@addcontentsline\addcontentsline
\newcommand{\dummy@addcontentsline}[3]{}
\newcommand{\DeactivateToc}{\let\addcontentsline\dummy@addcontentsline}
\newcommand{\ActivateToc}{\let\addcontentsline\original@addcontentsline}
\begin{document}

\title{Global null-controllability and nonnegative-controllability of slightly superlinear heat equations}
\author{Kévin Le Balc'h}
\address{Kévin Le Balc'h, Univ Rennes, ENS Rennes, CNRS, IRMAR - UMR 6625, F-35000 Rennes, France}
\email{kevin.lebalch@ens-rennes.fr}
\maketitle
\begin{abstract}
We consider the semilinear heat equation posed on a smooth bounded domain $\Omega$ of $\R^{N}$ with Dirichlet or Neumann boundary conditions. The control input is a source term localized in some arbitrary nonempty open subset $\omega$ of $\Omega$. The goal of this paper is to prove the uniform large time global null-controllability for semilinearities $f(s) = \pm |s| \log^{\alpha}(2+|s|)$ where $\alpha \in [3/2,2)$ which is the case left open by Enrique Fernandez-Cara and Enrique Zuazua in 2000. It is worth mentioning that the free solution (without control) can blow-up. First, we establish the small-time global \textit{nonnegative-controllability} (respectively \textit{nonpositive-controllability}) of the system, i.e., one can steer any initial data to a nonnegative (respectively nonpositive) state in arbitrary time. In particular, one can act locally thanks to the control term in order to prevent the blow-up from happening. The proof relies on precise observability estimates for the linear heat equation with a bounded potential $a(t,x)$. More precisely, we show that observability holds with a sharp constant of the order $\exp\left(C \norme{a}_{\infty}^{1/2}\right)$ for \textit{nonnegative} initial data. This inequality comes from a new $L^1$ Carleman estimate. A Kakutani-Leray-Schauder's fixed point argument enables to go back to the semilinear heat equation. Secondly, the uniform large time null-controllability result comes from three ingredients: the global nonnegative-controllability, a comparison principle between the free solution and the solution to the underlying ordinary differential equation which provides the convergence of the free solution toward $0$ in $L^{\infty}(\Omega)$-norm, and the local null-controllability of the semilinear heat equation.
\end{abstract}
\small
\tableofcontents
\normalsize
\section{Introduction}

Let $T >0$, $N \in \N^{*}$, $\Omega$ be a bounded, connected, open subset of $\R^{N}$ of class $C^2$ and $n$ be the outer unit normal vector to $\partial \Omega$. We consider the semilinear heat equation with Neumann boundary conditions:
\begin{equation}
\label{heatSL}
\left\{
\begin{array}{l l}
\partial_t y-  \Delta y + f(y) =  h 1_{\omega} &\mathrm{in}\ (0,T)\times\Omega,\\
\frac{\partial y}{\partial n}= 0&\mathrm{on}\ (0,T)\times\partial\Omega,\\
y(0,.)=y_0& \mathrm{in}\ \Omega,
\end{array}
\right.
\end{equation}
where $f \in C^{1}(\R;\R)$.
\begin{rmk}
All our results stay valid for Dirichlet boundary conditions (see \Cref{SubDir}).
\end{rmk}
\indent In \eqref{heatSL}, $y=y(t,.) : \Omega \rightarrow \R$ is the \textit{state} to be controlled and $h=h(t,.) : \Omega \rightarrow \R$ is the \textit{control input} supported in $\omega$, a nonempty open subset of $\Omega$.\\
\indent We assume that $f$ satisfies
\begin{equation}
\label{fnUl}
f(0) = 0.
\end{equation}
In this case, $y = 0$ solves \eqref{heatSL} with $y_0 = 0$ and $h=0$.\\
\indent In the following, we will also assume that $f$ satisfies the restrictive growth condition
\begin{equation}
\label{finfty}
\exists \alpha > 0,\  \frac{f(s)}{|s| \log^{\alpha}(1+ |s|)} \rightarrow 0\ \text{as}\ |s| \rightarrow + \infty.
\end{equation}
Under the hypothesis \eqref{finfty}, blow-up may occur if $h=0$ in \eqref{heatSL}. Take for example $f(s) = -|s| \log^{\alpha}(1+|s|)$ with $\alpha >1$. The mathematical theory of blow-up for 
\begin{equation}
\label{heatSLBlow}
\left\{
\begin{array}{l l}
\partial_t y-  \Delta y = |y| \log^{\alpha}(1+|y|) &\mathrm{in}\ (0,T)\times\Omega,\\
y= 0&\mathrm{on}\ (0,T)\times\partial\Omega,\\
y(0,.)=y_0& \mathrm{in}\ \Omega,
\end{array}
\right.
\end{equation}
was established in \cite{GavaReg} and \cite{GavaHam}. It was shown that blow-up
\begin{itemize}
\item occurs \textit{globally} in the whole domain $\Omega$ if $\alpha < 2$, 
\item is of \textit{pointwise nature} if $\alpha >2$,
\item is ‘\textit{regional}’, i.e., it occurs in an open subset of $\Omega$ if $\alpha=2$.
\end{itemize}
See \cite[Section 2 and Section 5]{GaVaSurv} for a survey on this problem.\\
\indent The goal of this paper is to analyze the \textit{null-controllability} properties of \eqref{heatSL}.\\
\indent Let us define $Q_T := (0,T)\times \Omega$. We recall two classical definitions of null-controllability.
\begin{defi}
Let $T>0$. The system \eqref{heatSL} is
\begin{itemize}
\item  \textit{globally null-controllable} in time $T$ if for every $y_0 \in L^{\infty}(\Omega)$, there exists $h \in L^{\infty}(Q_T)$ such that the solution $y$ of \eqref{heatSL} satisfies $y(T,.)=0$.
\item \textit{locally null-controllable} in time $T$ if there exists $\delta_T>0$ such that for every $y_0 \in L^{\infty}(\Omega)$ verifying $\norme{y_0}_{L^{\infty}(\Omega)} \leq \delta_T$, there exists $h \in L^{\infty}(Q_T)$ such that the solution $y$ of \eqref{heatSL} satisfies $y(T,.)=0$.
\end{itemize}
\end{defi}
We have the following well-known local null-controllability result.
\begin{theo}
\label{ResLocal}
For every $T >0$, \eqref{heatSL} is locally null-controllable in time $T$.
\end{theo}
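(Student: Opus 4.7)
The plan is to reduce the nonlinear problem to the linear heat equation with a bounded, space-time dependent potential via a Schauder-type fixed-point argument. Since $f\in C^1(\R;\R)$ and $f(0)=0$, we may write $f(s)=\tilde g(s)\,s$ with $\tilde g(s):=\int_0^1 f'(\sigma s)\,d\sigma\in C(\R;\R)$, so that \eqref{heatSL} reads
\begin{equation*}
\partial_t y-\Delta y+\tilde g(y)\,y = h\,1_\omega,\qquad \tfrac{\partial y}{\partial n}\big|_{\partial\Omega}=0,\qquad y(0,\cdot)=y_0.
\end{equation*}
The strategy is to freeze the coefficient $\tilde g(y)$ along an auxiliary function $z$, solve the resulting linear null-controllability problem, and close the loop by a fixed-point theorem.

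First I would fix $R>0$ and set $M_R:=\sup_{|s|\le R}|\tilde g(s)|$. For every $z$ in the closed ball $B_R:=\{z\in L^\infty(Q_T):\|z\|_{L^\infty(Q_T)}\le R\}$, the potential $a_z:=\tilde g(z)\in L^\infty(Q_T)$ satisfies $\|a_z\|_{L^\infty(Q_T)}\le M_R$. The classical null-controllability result for the linear heat equation with a bounded potential (obtained through a Carleman observability estimate and the Hilbert Uniqueness Method) provides a control $h_z$ such that the solution $y_z$ of the linearized equation with right-hand side $h_z1_\omega$ satisfies $y_z(T,\cdot)=0$, together with the quantitative bound
\begin{equation*}
\|h_z\|_{L^\infty(Q_T)}+\|y_z\|_{L^\infty(Q_T)}\le C(T,M_R)\,\|y_0\|_{L^\infty(\Omega)}.
\end{equation*}
This defines a map $\Lambda\colon z\mapsto y_z$ from $B_R$ into $L^\infty(Q_T)$. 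If $\|y_0\|_{L^\infty(\Omega)}\le\delta_T:=R/C(T,M_R)$, then $\Lambda(B_R)\subset B_R$. Parabolic regularity shows that $\Lambda(B_R)$ is bounded in a suitable Hölder space, hence relatively compact in $L^\infty(Q_T)$, and $\Lambda$ is continuous thanks to standard stability estimates for linear parabolic equations. Schauder's fixed-point theorem then yields $y^*\in B_R$ with $\Lambda(y^*)=y^*$, whose associated control $h^*:=h_{y^*}$ solves \eqref{heatSL} with $y^*(T,\cdot)=0$.

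The main technical obstacle is producing the linearized control with an $L^\infty$ bound depending linearly on $\|y_0\|_{L^\infty(\Omega)}$, since the fixed-point iteration must close in $L^\infty(Q_T)$ to keep $\tilde g(z)$ uniformly bounded. The $L^2$-valued HUM control is first obtained from the Fursikov-Imanuvilov observability inequality, whose weight vanishes at $t=T$, and is then upgraded to $L^\infty$ by splitting the time interval and exploiting the smoothing effect of the parabolic semigroup on $[T/2,T]$, at the cost of a constant $C(T,M_R)$ that grows explicitly in $M_R$. Once this quantitative linear step is in place, the continuity, compactness and self-mapping properties required for Schauder's theorem are routine, and the argument produces the desired threshold $\delta_T$.
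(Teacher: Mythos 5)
Your argument is essentially the proof the paper invokes: \Cref{ResLocal} is stated there without a written proof, as a direct consequence of the Fursikov--Imanuvilov null-controllability of the linear heat equation with a bounded potential together with the \emph{small $L^{\infty}$ perturbations method}, which is precisely the freeze-the-potential ($f(s)=\tilde g(s)s$, $a_z=\tilde g(z)$), quantitative linear $L^{\infty}$-control, self-mapping-ball, Schauder fixed-point scheme you describe. The one step to treat with care is the claimed continuity of the single-valued selection $z\mapsto y_z$ (null controls are not unique); this is routinely repaired either by always selecting the HUM-minimal control or by passing to the set-valued Kakutani--Leray--Schauder theorem, exactly as the paper does in \Cref{Fixedpointsection}.
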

The proof of \Cref{ResLocal} is a consequence of the (global) null-controllability of the linear heat equation with a bounded potential (due to Andrei Fursikov and Oleg Imanuvilov, see \cite{FI} or \cite[Theorem 1.5]{FCG}) and the \textit{small $L^{\infty}$ perturbations method} (see \cite[Lemma 6]{AT} and \cite{AKBD}, \cite{B}, \cite{LB}, \cite{LCMML}, \cite{WZ} for other results in this direction).\\

\indent The following global null-controllability (positive) result has been proved independently by Enrique Fernandez-Cara, Enrique Zuazua (see \cite[Theorem 1.2]{FCZ}) and Viorel Barbu under a sign condition (see \cite[Theorem 2]{B2} or \cite[Theorem 3.6]{Barbu-Book}) for Dirichlet boundary conditions. It has been extended to semilinearities which can depend on the gradient of the state and to Robin boundary conditions (then to Neumann boundary conditions) by Enrique Fernandez-Cara, Manuel Gonzalez-Burgos, Sergio Guerrero and Jean-Pierre Puel in \cite{FCGBGP2} (see also \cite{DoFCGBZ} for the Dirichlet case).
\begin{theo}\label{posanswer}\cite[Theorem 1]{FCGBGP2}\\
We assume that \eqref{finfty} holds for $\alpha \leq 3/2$. Then, for every $T>0$, \eqref{heatSL} is globally null-controllable in time $T$.
\end{theo}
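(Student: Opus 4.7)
The plan is the classical linearization-plus-fixed-point strategy going back to \cite{FCZ}: one linearizes \eqref{heatSL} around an arbitrary reference state, applies the Fursikov--Imanuvilov null-controllability result for the linear heat equation with bounded potential, and closes the loop by a Kakutani--Leray--Schauder argument.

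Define $g : \R \to \R$ by $g(s) := f(s)/s$ for $s \neq 0$ and $g(0) := f'(0)$. Hypothesis \eqref{finfty} is equivalent to the statement that, for every $\varepsilon > 0$, there exists $C_\varepsilon > 0$ with
\[
|g(s)| \leq C_\varepsilon + \varepsilon \log^\alpha(1+|s|), \qquad s \in \R.
\]
For $z \in L^\infty(Q_T)$, set $a_z(t,x) := g(z(t,x))$, so that $\norme{a_z}_{L^\infty(Q_T)} \leq C_\varepsilon + \varepsilon \log^\alpha(1+\norme{z}_{L^\infty(Q_T)})$.

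The key linear ingredient is a sharp null-controllability estimate: for any $a \in L^\infty(Q_T)$ and $y_0 \in L^\infty(\Omega)$, the system $\partial_t y - \Delta y + a y = h 1_\omega$ (with Neumann boundary and initial datum $y_0$) is null-controllable in time $T$, with a control $h \in L^\infty(Q_T)$ satisfying
\[
\norme{h}_{L^\infty(Q_T)} + \norme{y}_{L^\infty(Q_T)} \leq \exp\!\bigl( C\,\Phi(T)\,(1 + \norme{a}_{L^\infty(Q_T)}^{2/3}) \bigr)\, \norme{y_0}_{L^\infty(\Omega)},
\]
where $\Phi(T)$ is polynomial in $1+1/T$. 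The sharp $2/3$-exponent on $\norme{a}_\infty$ is the heart of the Fursikov--Imanuvilov method: it arises from the optimal choice of the large Carleman parameter, which must be of order $\norme{a}_\infty^{2/3}$ in order to absorb the zero-order term of the adjoint equation.

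Define the set-valued map $\Lambda : L^\infty(Q_T) \rightrightarrows L^\infty(Q_T)$ by assigning to $z$ the set of trajectories $y$ produced above with $a = a_z$. Combining the two displayed estimates yields
\[
\norme{y}_{L^\infty(Q_T)} \leq \exp\!\bigl( C_T + C_T\, \varepsilon^{2/3} \log^{2\alpha/3}(1+\norme{z}_{L^\infty(Q_T)}) \bigr)\, \norme{y_0}_{L^\infty(\Omega)}.
\]
This is where $\alpha \leq 3/2$ enters decisively: then $2\alpha/3 \leq 1$, so the right-hand side is bounded by $C_T'\, (1+\norme{z}_\infty)^{C_T \varepsilon^{2/3}} \norme{y_0}_\infty$. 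Choosing $\varepsilon > 0$ small enough that $C_T \varepsilon^{2/3} < 1$, the map $\Lambda$ sends a sufficiently large closed ball $B_R \subset L^\infty(Q_T)$ into itself. Parabolic regularity makes $\Lambda(B_R)$ relatively compact in $L^2(Q_T)$; its values are nonempty closed convex, and the graph is closed, so the Kakutani--Leray--Schauder fixed-point theorem produces $y \in \Lambda(y)$, which is a solution of \eqref{heatSL} driven to zero at time $T$ by some $h \in L^\infty(Q_T)$. The main obstacle is the sharp observability estimate with exponent $2/3$; the threshold $\alpha \leq 3/2$ is precisely the largest for which the composition $\exp(C\norme{a}_\infty^{2/3}) \circ \log^\alpha$ remains polynomial in its argument, which is exactly what is needed to close the fixed-point iteration.
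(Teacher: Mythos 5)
First, a remark on scope: the paper does not prove this statement itself; it quotes it from \cite{FCGBGP2}. Your overall architecture (linearization around $z$, sharp linear control cost with the $2/3$ exponent, Kakutani--Leray--Schauder) is indeed the strategy used in the literature and mirrored in this paper's proof of \Cref{TheoNeg}. However, there is a genuine gap in the linear estimate you invoke. The null-controllability cost for $\partial_t y-\Delta y+ay=h1_{\omega}$ is \emph{not} of the form $\exp\left(C\,\Phi(T)\left(1+\norme{a}_{L^{\infty}(Q_T)}^{2/3}\right)\right)$ with $\Phi$ depending only on $T$: the correct form, recalled in \eqref{coutcontrolereg2/3}, is
\begin{equation*}
\exp\left(C\left(1+\frac{1}{T}+T\norme{a}_{L^{\infty}(Q_T)}+\norme{a}_{L^{\infty}(Q_T)}^{2/3}\right)\right),
\end{equation*}
and the term $T\norme{a}_{L^{\infty}(Q_T)}$ cannot be absorbed into $1/T+\norme{a}_{L^{\infty}(Q_T)}^{2/3}$ for fixed $T$ (let $\norme{a}_{\infty}\rightarrow+\infty$). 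With $\norme{a_z}_{\infty}\leq C_{\varepsilon}+\varepsilon\log^{\alpha}(1+\norme{z}_{\infty})$ and $1<\alpha\leq 3/2$, the factor $\exp\left(CT\varepsilon\log^{\alpha}(1+\norme{z}_{\infty})\right)$ is superpolynomial in $\norme{z}_{\infty}$, so your self-mapping estimate for $\Lambda$ on a large ball fails. As written, your argument only covers $\alpha\leq 1$ (Fernandez-Cara's earlier result).

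The missing idea is the short-control-time trick of \cite{FCZ}, which this paper uses explicitly in \Cref{Fixedpointsection} (with exponent $1/2$ instead of $2/3$): control only on $(0,T_z^{*})$ with $T_z^{*}:=\min\left(T,\norme{a_z}_{\infty}^{-1/3}\right)$, so that $T_z^{*}\norme{a_z}_{\infty}\leq\norme{a_z}_{\infty}^{2/3}$ and $1/T_z^{*}\leq 1/T+\norme{a_z}_{\infty}^{1/3}$, which turns the whole cost into $\exp\left(C_T\left(1+\norme{a_z}_{\infty}^{2/3}\right)\right)$; then extend $h$ by $0$ on $(T_z^{*},T)$, where the linearized solution remains at $0$ by uniqueness (and the nonlinear trajectory, once the fixed point is found, remains at $0$ because $f(0)=0$). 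With this modification your exponent computation $2\alpha/3\leq 1$ closes the fixed point exactly as you describe. A secondary point: you need compactness of $\Lambda(B_R)$ in $L^{\infty}(Q_T)$, the space on which $\Lambda$ acts, not merely in $L^2(Q_T)$; this follows from maximal parabolic $L^p$ regularity and the embedding $X_p\hookrightarrow C^{\beta/2,\beta}(\overline{Q_T})$, as in point (2) of the paper's fixed-point argument.
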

\begin{rmk}
Historically, the first global null-controllability (positive) result for \eqref{heatSL} with $f$ satisfying \eqref{finfty} was proved by Enrique Fernandez-Cara in \cite{FeCa} for $\alpha \leq 1$ and for Dirichlet boundary conditions.
\end{rmk}
The following global null-controllability (negative) result has been proved by Enrique Fernandez-Cara, Enrique Zuazua (see \cite{FCZ}).
\begin{theo}\label{neganswer}\cite[Theorem 1.1]{FCZ}\\
We set $f(s) := \int_{0}^{|s|} \log^{p}(1+\sigma) d\sigma$ with $p > 2$ and we assume that $\Omega \setminus \overline{\omega} \neq \emptyset$. Then, for every $T>0$, there exists an initial datum $y_0 \in L^{\infty}(\Omega)$ such that for every $h \in L^{\infty}(Q_T)$, the maximal solution $y$ of \eqref{heatSL} blows-up in time $T^{*} < T$.
\end{theo}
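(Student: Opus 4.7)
\emph{Plan of proof.}
The strategy is to exploit the pointwise blow-up regime $p>2$ of \cite{GavaReg,GavaHam}: the nonlinearity is so strong that at isolated interior points the reaction overwhelms diffusion, so no bounded control supported in $\omega$ can rescue the solution from blowing up at a point of $\Omega\setminus\overline{\omega}$ sufficiently far from $\omega$. The plan is to concentrate a very negative initial datum $y_0$ at such a point and reduce the PDE to an autonomous ODE blowing up in time strictly less than $T$, by means of a Kaplan-type test against a Dirichlet eigenfunction.

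Fix $x_0\in\Omega\setminus\overline{\omega}$ with $d:=\mathrm{dist}(x_0,\omega)>0$, and a ball $B_0:=B(x_0,r_0)\Subset\Omega\setminus\overline{\omega}$. Let $\varphi_1\in C^\infty(\overline{B_0})$ be the first Dirichlet eigenfunction of $-\Delta$ on $B_0$ with eigenvalue $\lambda_1$, normalized so that $\int_{B_0}\varphi_1=1$. Choose $y_0\in L^\infty(\Omega)$ nonpositive, supported in $B_0$, with $M:=-\int_{B_0} y_0\,\varphi_1 >0$ large, to be fixed later. For an arbitrary control $h\in L^\infty(Q_T)$, let $y$ be the associated solution of \eqref{heatSL}, and set $W(t):=-\int_{B_0}y(t)\,\varphi_1\,dx$. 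Testing \eqref{heatSL} against $-\varphi_1$ (which kills the control term, since $\mathrm{supp}\,\varphi_1\subset B_0\subset\Omega\setminus\overline{\omega}$) and using Green's formula together with $\varphi_1|_{\partial B_0}=0$ gives
\begin{equation*}
W'(t)=-\lambda_1\,W(t)+\int_{\partial B_0} y(t,\sigma)\,\partial_n\varphi_1(\sigma)\,dS+\int_{B_0} f(y)\,\varphi_1\,dx.
\end{equation*}
Since $f$ is even, nonnegative and convex, Jensen's inequality for the probability measure $\varphi_1\,dx$ yields $\int_{B_0}f(y)\,\varphi_1\geq f(-W)=f(W)$, whence
\begin{equation*}
W'(t)\;\geq\;-\lambda_1\,W(t)+f(W(t))-C_{\varphi_1}\,\norme{y(t)}_{L^\infty(\partial B_0)},
\end{equation*}
with $C_{\varphi_1}:=\norme{\partial_n\varphi_1}_{L^1(\partial B_0)}<\infty$.

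The main difficulty is the last term, since the trace of $y$ on $\partial B_0$ depends a priori on $h$. I would handle it by combining two facts. First, on $\Omega\setminus\overline{\omega}$ the function $y$ is \emph{subcaloric} (because $-f(y)\leq 0$), so the parabolic maximum principle gives $\sup_{\partial B_0\times[0,t]}y\leq\max\bigl(\sup y_0,\,\sup_{\partial\omega\times[0,t]}y\bigr)$, reducing the problem to bounding the trace of $y$ on $\partial\omega$. Second, Gaussian heat-kernel estimates in $\Omega$ show that the influence of the control at $\partial B_0$ is damped by $\exp(-d^2/4t)$, so that on the vanishingly short blow-up window $T^*(M):=\int_M^\infty f(u)^{-1}du$ (which tends to $0$ as $M\to\infty$ since $p>1$) the quantity $\norme{y(t)}_{L^\infty(\partial B_0)}$ stays bounded uniformly in $h$. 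The hypothesis $p>2$ is used precisely here: it ensures, via the pointwise-blow-up dichotomy of \cite{GavaReg,GavaHam}, that this separation of time scales is genuine, so the boundary contribution can be absorbed into $\tfrac12 f(W)$. Once this is achieved, the ODI reduces to $W'(t)\geq\tfrac12 f(W(t))$; the convergence of $\int^\infty f(u)^{-1}du$ for $p>1$ then forces $W(t)\to+\infty$ at some time $T^{**}<T$ provided $M$ has been chosen large enough (depending only on $T$, $d$ and $\varphi_1$), contradicting the existence of a bounded solution on $[0,T]$ and establishing blow-up at some $T^*<T$.
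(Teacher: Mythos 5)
The paper does not prove this statement itself: it is quoted from \cite[Theorem 1.1]{FCZ}, and the paper only records (in the remark following it) that the proof rests on a \emph{localized estimate in $\Omega\setminus\overline{\omega}$ which is uniform in the control}. Your Kaplan-type skeleton (eigenfunction functional $W$, Jensen via convexity of $f$, ODE comparison using $\int^{\infty}f(u)^{-1}du<\infty$) is the right general shape, but the step where you dispose of the boundary term $\int_{\partial B_0}y\,\partial_n\varphi_1$ contains a genuine gap, and it is exactly the step that carries the whole difficulty. The control $h$ ranges over \emph{all} of $L^{\infty}(Q_T)$ with no bound on its norm, so the Gaussian factor $e^{-d^2/4t}$ buys you nothing: for any fixed window length, the controller may take $\norme{h}_{L^{\infty}}$ as large as it pleases and make $y$ arbitrarily large and positive on $\partial B_0$ (for the \emph{linear} heat equation this is certainly possible, by approximate controllability). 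Your reduction via the subcaloricity of $y$ on $\Omega\setminus\overline{\omega}$ only moves the problem to $\partial\omega$, where the controller acts directly. The only mechanism that can stop the control's influence from reaching $B_0$ is the nonlinearity itself ($f$ is even, so $f(y)$ also damps large \emph{positive} excursions of $y$ as they propagate out of $\omega$), and quantifying that damping is precisely the content of the localized estimate of \cite[Section 2]{FCZ}; your appeal to the blow-up classification of \cite{GavaReg,GavaHam}, which concerns the free equation with the opposite sign, does not substitute for it.

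A concrete symptom of the gap: nowhere in your argument is $p>2$ used in a load-bearing way — the ODE step only needs $p>1$ — yet the conclusion is false for $p<3/2$, since then \eqref{finfty} holds with some $\alpha\le 3/2$ and \Cref{posanswer} gives global null-controllability in every time $T$. So any proof along your lines must fail before reaching the ODE. The way \cite{FCZ} circumvents this is to replace the Dirichlet eigenfunction of $B_0$ by a weight $\rho$ \emph{compactly supported} in $\Omega\setminus\overline{\omega}$, so that no boundary term appears at all; the price is an interior term $\int y\,\Delta\rho$, which is absorbed into $\tfrac12\int f(y)\rho$ plus a constant \emph{independent of $h$} by Young's inequality for the convex conjugate of $f$ — and it is in checking that $\int\rho\,f^{*}\bigl(C|\Delta\rho|/\rho\bigr)<\infty$ for a suitable choice of $\rho$ that the hypothesis $p>2$ (the exponent $2$ reflecting the second-order Laplacian) genuinely enters. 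If you rework your proposal with a compactly supported weight and the convex-duality absorption, the argument closes; as written, it does not.
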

\begin{rmk}
Such a function $f$ does satisfy \eqref{finfty} for any $\alpha >p$ because $|f(s)| \sim |s| \log^{p}(1+|s|)$ as $|s| \rightarrow + \infty$. Then, \Cref{neganswer} shows that \eqref{heatSL} can fail to be null-controllable for every $T >0$ under the hypothesis \eqref{finfty} with $\alpha > 2$. \Cref{neganswer} comes from a localized estimate in $\Omega \setminus \overline{\omega}$ that shows that the control cannot compensate the blow-up phenomena occurring in $\Omega \setminus \overline{\omega}$ (see \cite[Section 2]{FCZ}).
\end{rmk}
\indent When the nonlinear term $f$ is dissipative, i.e., $s f (s) \geq 0$ for every $s \in \R$, then blow-up cannot occur. Furthermore, such a nonlinearity produces energy decay for the uncontrolled equation, therefore naively one may be led to believe that it can help in steering the solution to zero in arbitrary short time. The results of Sebastian Anita and Daniel Tataru show that this is false, more precisely that for ‘strongly’ superlinear $f$ one needs a sufficiently large time in order to bring the solution to zero. An intuitive explanation for this is that the nonlinearity is also damping the effect of the control as it expands from the controlled region into the uncontrolled region (see \cite{AT}).
\begin{theo}\label{AnTat}\cite[Theorem 3]{AT}\\
We set $f(s) := s\log^{p}(1+|s|)$ with $p>2$ and we assume that $\Omega \setminus \overline{\omega} \neq \emptyset$. Then, there exist $x_0 \in \Omega \setminus \overline{\omega}$, $T_0 \in (0,1)$ such that for every $T \in (0,T_0)$, $h\in L^{\infty}(Q_T)$, there exists $y_0 \in L^{\infty}(\Omega)$ such that the solution $y$ to \eqref{heatSL} satisfies $y(T,x_0) < 0$.
\end{theo}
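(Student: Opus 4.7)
The plan is to exhibit, for each $T < T_0$ and each $h \in L^{\infty}(Q_T)$, an initial datum $y_0 \leq 0$ in $L^{\infty}(\Omega)$ such that the strong nonlinear dissipation produces a barrier preventing the signal injected by $h$ in $\omega$ from neutralizing the negativity of $y_0$ at the point $x_0$ within time $T$. Fix $x_0 \in \Omega \setminus \overline{\omega}$, set $d := \mathrm{dist}(x_0, \omega) > 0$, and, given $T$ and $K := \norme{h}_{L^{\infty}(Q_T)}$, take $y_0 \equiv -M$ for a large constant $M = M(T, K)$ to be optimized. Since the initial datum is constant and the boundary condition is Neumann, the unperturbed solution $\hat{y}$ (obtained from \eqref{heatSL} with $h \equiv 0$ and the same $y_0$) is spatially constant, $\hat{y}(t, x) = -u(t)$, where $u$ solves the scalar ODE
\[ u'(t) = -u(t) \log^{p}(1 + u(t)), \qquad u(0) = M, \]
so $\hat{y}(T, x_0) = -u(T) < 0$, and $u(T)$ can be bounded below in terms of $M$ and $T$ by elementary ODE analysis.

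Setting $z := y - \hat{y}$, I reduce the problem to a linear parabolic equation with a nonnegative potential: $z$ satisfies
\[ \partial_t z - \Delta z + a(t, x)\, z = h\, 1_{\omega}, \qquad z(0, \cdot) = 0, \]
with homogeneous Neumann boundary conditions, where $a(t, x) := \int_{0}^{1} f'(\theta y + (1 - \theta) \hat{y})\, d\theta$. Since $f'(s) \sim \log^{p}(1 + |s|)$ for $|s|$ large, one has $a(t, x) \geq c \log^{p}(1 + u(t))$ for $M$ large. The key analytic step is then to combine the Feynman-Kac representation of $z(T, x_0)$ with the Gaussian bound on the reflected heat kernel, producing a factor $e^{-d^{2}/(4\tau)}$ for Brownian paths of length $\tau$ connecting $x_0$ to $\omega$, together with the nonlinear damping factor $e^{-\int_{s}^{T} a}$. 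A saddle-point optimization in the time-scale $\tau$ should then yield, up to polynomial corrections,
\[ |z(T, x_0)| \leq C K \exp\!\left( -c\, d\, \log^{p/2}(1 + u(T)) \right). \]

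Finally, I would choose $M$ (and hence $u(T)$) large enough that the right-hand side above is strictly less than $u(T)$. Setting $L := \log(1 + u(T))$, the required inequality reads schematically $K e^{-c d L^{p/2}} < e^{L} - 1$, which is satisfied for $L$ large as soon as $p/2 > 1$, i.e., $p > 2$. This gives $y(T, x_0) = -u(T) + z(T, x_0) < 0$, as desired.

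The main obstacle is the sharp damping estimate on $z(T, x_0)$: one must justify the lower bound on $a$ along the Brownian paths from $x_0$ to $\omega$, and carry out the saddle-point balance between Gaussian propagation and nonlinear damping tightly enough that $|z(T, x_0)|$ is dominated by $u(T)$ as $M \to \infty$, uniformly in $K$. This is precisely where the exponent $p > 2$ is used.
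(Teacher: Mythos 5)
First, a remark on what you are being compared against: the paper does not prove this statement at all. It is quoted verbatim from Anita--Tataru \cite[Theorem 3]{AT}, and the only hint given is that the proof rests on pointwise upper bounds on $y$ in $\Omega\setminus\overline{\omega}$ that are \emph{independent of the control} $h$. Your attempt must therefore stand on its own, and it contains a genuine gap.

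The gap is in the final step. You need $|z(T,x_0)|<u(T)$, but your estimate of the left-hand side is linear in $K=\norme{h}_{L^{\infty}(Q_T)}$, namely $|z(T,x_0)|\leq CK\exp\left(-c\,d\,L^{p/2}\right)$ with $L=\log(1+u(T))$, while the right-hand side \emph{saturates} as $M\to+\infty$. Indeed, setting $G(v):=\int_{v}^{+\infty}\frac{d\sigma}{\sigma\log^{p}(1+\sigma)}$ (finite for $p>1$), the ODE gives $u(T)=G^{-1}(G(M)+T)\leq G^{-1}(T)=:u_{\infty}(T)<+\infty$ for every $M$; this is exactly the universal-decay phenomenon that the paper itself exploits in Step 2 of the proof of \Cref{CorGlobal} (the bound $v(t)\leq F^{-1}(t-T_1)$, independent of the initial datum). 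Hence $L\leq L_{\infty}:=\log(1+u_{\infty}(T))$ no matter how large $M$ is, and your inequality $Ke^{-cdL^{p/2}}<e^{L}-1$ fails as soon as $K>(e^{L_{\infty}}-1)e^{cdL_{\infty}^{p/2}}$: no choice of $y_0$ can beat a sufficiently large control. (Your stated use of $p>2$ is also off: for fixed $K$ the inequality $Ke^{-cdL^{p/2}}<e^{L}-1$ holds for $L$ large for \emph{every} $p>0$, so the exponent is not doing the work you attribute to it.) The missing idea is a bound on the influence of the control at $x_0$ that is \emph{uniform in} $\norme{h}_{L^{\infty}}$, not linear in it. This cannot come from the linearized equation for $z$ with the potential only bounded below by $c\log^{p}(1+u(t))$, a quantity that ignores how large $y$ actually becomes near $\partial\omega$; one must use the full nonlinearity there. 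This is what Anita and Tataru do: they build an explicit supersolution of the \emph{nonlinear} equation on $(0,T)\times(\Omega\setminus\overline{\omega})$ which equals $+\infty$ on $(0,T)\times\partial\omega$ yet is finite at $(T,x_0)$, so that the comparison principle yields an $h$-independent pointwise bound; the existence of such a barrier is precisely where $p>2$ enters (it is the dissipative counterpart of the localized estimates of \cite[Section 2]{FCZ}). Your Feynman--Kac computation, however tightly the saddle point is optimized, cannot produce such an $h$-independent estimate.
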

\begin{rmk}
In particular, for such a $f$ as in \Cref{AnTat}, \eqref{heatSL} is not globally null-controllable in small time $T$. \Cref{AnTat} is due to pointwise upper bounds on the solution $y$ of \eqref{heatSL} which are independent of the control $h$ (see \cite[Section 3]{AT}).
\end{rmk}
\section{Main results}
\subsection{Small-time global nonnegative-controllability}
We introduce a new concept of controllability.
\begin{defi}
\label{DefiNegPos}
Let $T>0$. The system \eqref{heatSL} is \textit{globally nonnegative-controllable} (respectively \textit{globally nonpositive-controllable}) in time $T$ if for every $y_0 \in L^{\infty}(\Omega)$, there exists $h \in L^{\infty}(Q_T)$ such that the solution $y$ of \eqref{heatSL} satisfies
\begin{equation}
\label{yTNeg}
y(T,.)\geq 0 \qquad (\text{respectively}\ y(T,.) \leq 0).
\end{equation}
\end{defi}
The first main result of this paper is a small-time global nonnegative-controllability result for \eqref{heatSL}.
\begin{theo}
\label{TheoNeg}
We assume that \eqref{finfty} holds for $\alpha \leq 2$ and $f(s) \geq 0$ for $s \geq 0$ (respectively $f(s) \leq 0$ for $s \leq 0$). Then, for every $T>0$, \eqref{heatSL} is globally nonnegative-controllable (respectively globally nonpositive-controllable) in time $T$.
\end{theo}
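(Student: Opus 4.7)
The strategy I would follow is a linearization plus fixed-point argument in the spirit of Fursikov-Imanuvilov and Fernandez-Cara-Zuazua, the decisive new ingredient being a sharp observability inequality for the linear heat equation with bounded potential, restricted to adjoint solutions of a prescribed sign. Given $z \in L^{\infty}(Q_{T})$, set $g_{z}(t,x) := f(z)/z$ (extended by $f'(0)$ on $\{z=0\}$) so that $f(z) = g_{z} z$, and consider the linear control problem
\[
\partial_{t} y - \Delta y + g_{z} y = h 1_{\omega}, \qquad y(0,\cdot) = y_{0}.
\]
By \eqref{finfty} one has $\norme{g_{z}}_{L^{\infty}} \leq C \log^{\alpha}(2 + \norme{z}_{L^{\infty}})$. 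The first task is to construct a control $h_{z}$ steering this linear system to a nonnegative final state; through a penalized minimization of $\tfrac12 \iint_{Q_T}|h|^{2} + \tfrac1{2\varepsilon}\int_{\Omega} (y(T))_{-}^{2}$ followed by the limit $\varepsilon \to 0$, this reduces by duality to an observability inequality
\[
\norme{q(0,\cdot)}_{L^{2}(\Omega)}^{2} \leq K(T, \norme{g_{z}}_{L^{\infty}}) \int_{0}^{T}\!\!\int_{\omega} q^{2},
\]
for the adjoint $-\partial_{t}q - \Delta q + g_{z} q = 0$ restricted to \emph{nonpositive} final data $q(T,\cdot)$, the nonpositivity coming from the Euler--Lagrange condition $q^{\varepsilon}(T)=-\varepsilon^{-1}(y^{\varepsilon}(T))_{-}$.

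The technical core is then to prove that, for such sign-constrained solutions, one can take $K(T,M)=\exp\bigl(C(1 + 1/T + TM + M^{1/2})\bigr)$, improving the classical exponent $M^{2/3}$ underlying \Cref{posanswer}. By the maximum principle one may replace $q$ by $\tilde q := -q \geq 0$ and exploit that positive solutions of the adjoint equation satisfy $\norme{\tilde q(t,\cdot)}_{L^{1}} \leq e^{(T-t) M} \norme{\tilde q(T,\cdot)}_{L^{1}}$ with no $M^{2/3}$ blow-up of the $L^{2}$ energy. I would therefore derive a new $L^{1}$-weighted Carleman inequality for positive solutions, using the Fursikov--Imanuvilov weight $\varphi(t,x) = \theta(t)\bigl(\mu e^{\lambda \eta(x)} - C_{\lambda}\bigr)$: since no sign-indefinite gradient terms need to be absorbed on the left-hand side, the rescaling $s \sim M^{1/2}$ becomes available, instead of $s \sim M^{2/3}$ as in the classical $L^{2}$ estimate, producing the sharper exponent. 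Converting this $L^{1}$ Carleman estimate back into the $L^{2}$ observability statement would rely on parabolic regularization on a subinterval of $(0,T/2)$ together with the $L^{1}$-$L^{\infty}$ smoothing of the heat semigroup.

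Finally I would close the argument by setting $\Lambda(z) := y_{z}$, where $y_{z}$ is the controlled trajectory associated with $h_{z}$. The bound $\norme{h_{z}}_{L^{\infty}} \leq \exp\bigl(C \log^{\alpha/2}(2+\norme{z}_{L^{\infty}})\bigr) \norme{y_{0}}_{L^{\infty}}$ grows strictly sub-polynomially in $\norme{z}_{L^{\infty}}$ because $\alpha/2 < 1$, so $\norme{y_{z}}_{L^{\infty}}$ admits an a priori bound independent of $z$ on a large enough ball; compactness and continuity of $\Lambda$ follow from parabolic smoothing and the continuity of $z \mapsto g_{z}$, so that the Kakutani--Leray--Schauder theorem produces a fixed point $y$ which solves \eqref{heatSL} and satisfies $y(T,\cdot)\geq 0$. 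The sign assumption $f(s)\geq 0$ for $s\geq 0$ is used to keep $g_{y}$ compatible with the nonnegativity penalization at the fixed point, so that the duality argument closes coherently. The main obstacle, and the central novelty, is precisely the improvement from $M^{2/3}$ to $M^{1/2}$: the standard $L^{2}$ Carleman approach is known to be sharp at $2/3$, and exploiting the sign restriction via an $L^{1}$ Carleman estimate is the decisive step that allows reaching $\alpha \leq 2$; the borderline case $\alpha = 2$ requires an additional logarithmic refinement to still close the a priori bound on $\Lambda$.
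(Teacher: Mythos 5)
Your overall architecture is the same as the paper's: linearize with $g_z=f(z)/z$, prove a sign-restricted observability inequality with constant $\exp\bigl(C(1+1/T+T\norme{a}_{\infty}+\norme{a}_{\infty}^{1/2})\bigr)$ via a new $L^1$ Carleman estimate for nonnegative adjoint solutions (where the absence of sign-indefinite terms lets $s\sim \norme{a}_{\infty}^{1/2}$ suffice), upgrade to $L^2$ observability by parabolic smoothing, and close with a Kakutani--Leray--Schauder fixed point. That is indeed the paper's route, including its central novelty.

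However, there is a genuine gap in your fixed-point step. You write the observability constant correctly as $K(T,M)=\exp\bigl(C(1+1/T+TM+M^{1/2})\bigr)$ with $M=\norme{g_z}_{\infty}$, but then assert $\norme{h_z}_{\infty}\leq \exp\bigl(C\log^{\alpha/2}(2+\norme{z}_{\infty})\bigr)\norme{y_0}$, silently dropping the term $TM$. For a \emph{fixed} control horizon $T$ this term gives a cost of order $\exp\bigl(CT\log^{\alpha}(2+\norme{z}_{\infty})\bigr)$, which for $\alpha>1$ (a fortiori $\alpha=2$) grows super-polynomially in $\norme{z}_{\infty}$, and the a priori bound on the fixed-point set does \emph{not} close. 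The paper's resolution is to control only on the short interval $(0,T_z^{*})$ with $T_z^{*}:=\min\bigl(T,\norme{g(z)}_{\infty}^{-1/2}\bigr)$ as in \eqref{defT*}, so that both $1/T_z^{*}$ and $T_z^{*}\norme{g(z)}_{\infty}$ are $O\bigl(\norme{g(z)}_{\infty}^{1/2}\bigr)$ and the cost becomes $\exp\bigl(C(1+\norme{g(z)}_{\infty}^{1/2})\bigr)$ as in \eqref{estcontFixPoint}; combined with \eqref{estig} (which also handles your borderline case $\alpha=2$, since \eqref{finfty} is a little-$o$ condition) this yields $(2+\norme{y}_{\infty})^{C\varepsilon}$ and the estimate closes. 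This device forces a second ingredient you are missing: after switching the control off at $T_z^{*}$ one must show the free solution remains nonnegative and uniformly bounded on $(T_z^{*},T)$, which is exactly where the hypotheses $f(0)=0$ and $f(s)\geq 0$ for $s\geq 0$ enter, via the comparison principle (\Cref{LemComp}, \Cref{SubSuper}): the solution is squeezed between $0$ and the free heat evolution of $y(T_z^{*},\cdot)$. Your stated use of the sign condition (``to keep $g_y$ compatible with the nonnegativity penalization'') is not its actual role and does not substitute for this step. A secondary, fixable point: your $L^2$-penalization of $h$ produces an $L^2$ control, whereas the definition of nonnegative-controllability requires $h\in L^{\infty}(Q_T)$; the paper's $L^2$-$L^1$ observability yields a control that is constant in $(t,x)$, hence bounded, for free.
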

\begin{rmk}
\Cref{TheoNeg} is almost sharp because it does not hold for $\alpha >2$ according to \Cref{AnTat}. The case where $|f(s)| \sim |s| \log^{2}(1+|s|)$ as $|s| \rightarrow +\infty$ is open.
\end{rmk}
\begin{rmk}
\Cref{TheoNeg} does not treat the case $f(s) = -s \log^{p}(1+|s|)$ with $p<2$ because of the sign condition.
\end{rmk}
\subsection{Large time global null-controllability}
The second main result of this paper is the following one.
\begin{theo}
\label{CorGlobal}
We assume that \eqref{finfty} holds for $\alpha \leq 2$, $f(s) > 0$ for $s > 0$ or $f(s) < 0$ for $s < 0$ and $1/f \in L^{1}([1,+\infty))$. Then, there exists $T$ sufficiently large such that \eqref{heatSL} is globally null-controllable in time $T$.
\end{theo}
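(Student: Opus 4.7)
The plan is to follow the three-step decomposition outlined in the abstract: first use global nonnegative-controllability (\Cref{TheoNeg}) to enter the nonnegative cone, then let the natural dissipation $-f(y)$ drive the free solution close to $0$ in $L^{\infty}$ on a bounded time interval, and finally close the argument with local null-controllability (\Cref{ResLocal}). By the symmetry $(y,f,h) \mapsto (-y,\,-f(-\cdot),\,-h)$, I may assume without loss of generality that $f(s) > 0$ for $s > 0$, so that $f(s) \geq 0$ for $s \geq 0$ in view of $f(0)=0$.

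To assemble the three time increments, fix $T_3 > 0$ arbitrary and let $\delta := \delta_{T_3} > 0$ be the $L^{\infty}$ threshold from \Cref{ResLocal}. Set
\[
T_2 := \int_{\delta}^{+\infty} \frac{ds}{f(s)},
\]
which is finite by $1/f \in L^{1}([1,+\infty))$ together with continuity and strict positivity of $f$ on $[\delta,1]$. Pick any $T_1 > 0$ and put $T := T_1 + T_2 + T_3$. Given $y_0 \in L^{\infty}(\Omega)$, the control will be constructed by concatenation: on $[0,T_1]$ apply \Cref{TheoNeg} (applicable since $\alpha \leq 2$ and $f \geq 0$ on $\R_+$) to produce a control $h_1$ yielding $y_1 := y(T_1,\cdot) \geq 0$; on $[T_1,T_1+T_2]$ take $h \equiv 0$; on $[T_1+T_2,T]$ use \Cref{ResLocal}. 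The heart of the matter is the middle interval. Let $z$ solve the ODE $\dot z + f(z) = 0$, $z(0) = \norme{y_1}_{L^{\infty}(\Omega)}$. Linearizing $f(y) = a(t,x)\,y$ and applying the maximum principle keeps $y \geq 0$; comparing $y$ with the spatially constant $z$ via $w := z - y$, which satisfies $\partial_t w - \Delta w + b(t,x) w = 0$ with $b := (f(z)-f(y))/(z-y)$ and homogeneous Neumann data, gives $0 \leq y \leq z$. Separation of variables in the ODE yields $T_2 = \int_{z(T_2)}^{z(0)} ds/f(s)$, from which a quick contradiction shows $z(T_2) \leq \delta$ regardless of the value of $z(0) \in [0,+\infty)$. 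The hypothesis $1/f \in L^{1}([1,+\infty))$ is precisely what makes this decay time uniform in the (otherwise arbitrary) size of $y_1$, and hence makes a single fixed $T$ work for all initial data.

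The main obstacle I anticipate is making the ODE comparison fully rigorous on the entire interval $[T_1, T_1+T_2]$: one must simultaneously rule out finite-time blow-up of the free solution and produce the pointwise upper bound $y \leq z$. The natural remedy is a continuation argument — locally in time $y$ exists and is bounded, so $b$ is bounded and the maximum principle gives $y \leq z \leq \norme{y_1}_{L^{\infty}(\Omega)}$; this \emph{a priori} bound prevents blow-up and allows the argument to be iterated up to $T_1+T_2$. Once this comparison is secured, concatenating $h_1$, the zero control, and the local control $h_3$ produces an admissible $h \in L^{\infty}(Q_T)$ driving $y_0$ to zero at time $T$, completing the proof.
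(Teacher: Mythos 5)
Your proposal is correct and follows essentially the same three-step route as the paper: nonnegative-controllability via \Cref{TheoNeg}, free evolution compared with the ODE $\dot z = -f(z)$ (your choice $T_2 = \int_{\delta}^{+\infty} ds/f(s)$ is exactly $F(\delta)$ for the paper's function $F$, and your monotonicity argument for $z(T_2)\leq\delta$ matches the paper's bound $v(t)\leq F^{-1}(t-T_1)$), and finally \Cref{ResLocal}. The comparison step you flag as the main obstacle is handled in the paper by the sub/supersolution principle of \Cref{SubSuper}, which simultaneously gives existence on the whole interval and the bound $0\leq y\leq v$, so your continuation argument is a valid equivalent.
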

\begin{rmk}
\Cref{CorGlobal} proves that \Cref{neganswer} is almost sharp. Indeed, let us take $f(s) = \int_{0}^{|s|} \log^{p}(1+\sigma) d\sigma$ with $p < 2$, then by \Cref{CorGlobal}, there exists $T$ sufficiently large such that \eqref{heatSL} is globally null-controllable in time $T$. In particular, one can find a localized control which prevents the blow-up from happening. The case $f(s) = \int_{0}^{|s|} \log^{2}(1+\sigma) d\sigma$ is open.
\end{rmk}
\begin{rmk}
\Cref{CorGlobal} does not treat the case $f(s) = -s \log^{p}(1+|s|)$ with $p<2$ because of the sign condition.
\end{rmk}
\begin{rmk}
The small-time global null-controllability of \eqref{heatSL} remains open when \eqref{finfty} holds for $3/2 < \alpha \leq 2$.
\end{rmk}
\subsection{Proof strategy of the small-time global nonnegative-controllability}
We will only prove the global nonnegative-controllability result. The nonpositive-controllability result is an easy adaptation.\\
\indent The proof strategy of \Cref{TheoNeg} will follow Enrique Fernandez-Cara and Enrique Zuazua's proof of \Cref{posanswer} (see \cite{FCZ}).\\
\indent The starting point is to get some precise \textit{observability estimates} for the linear heat equation with a bounded potential $a(t,x)$ for \textit{nonnegative initial data}. More precisely, we show that observability holds with a sharp constant of the order $\exp\left(C \norme{a}_{\infty}^{1/2}\right)$ for nonnegative initial data (see \Cref{TheoObsL^2L^1} below). This is done thanks to a \textit{new Carleman estimate in $L^1$} (see \Cref{CarlL1} below). This leads to a nonnegative-controllability result in $L^{\infty}$ in the linear case with an estimate of the \textit{control cost} of the order $\exp\left(C \norme{a}_{\infty}^{1/2}\right)$ which is the key point of the proof (see \Cref{TheoControlInfty} below).\\
\indent We end the proof of \Cref{TheoNeg} by a Kakutani-Leray-Schauder's fixed-point strategy. The idea of taking short control times to avoid blow-up phenomena is the same as in \cite{FCZ} and references therein. More precisely, the construction of the control follows two steps. The first step consists in steering the solution $y$ of \eqref{heatSL} to $y(T^{*},.) \geq 0$ in time $T^{*} \leq T$ with an appropriate choice of the control. Then, the two conditions: $f(0)=0$ and the dissipativity of $f$ in $\R^{+}$ imply that the free solution $y$ of \eqref{heatSL} (with $h = 0$) defined in $(T^{*}, T)$ stays nonnegative and bounded by using a comparison principle (see \Cref{Fixedpointsection}).
\subsection{Proof strategy of the large time global null-controllability}
We will only treat the case where $f(s) > 0$ for $s >0$. The other case, i.e., $f(s) < 0$ for $s < 0$ is an easy adaptation.\\
\indent The proof strategy of \Cref{CorGlobal} is divided into three steps.\\
\indent First, for every initial data $y_0 \in L^{\infty}(\Omega)$, one can steer the solution $y$ of \eqref{heatSL} in time $T_1 := 1$ (for instance) to a nonnegative state  by using \Cref{TheoNeg}.\\
\indent Secondly, we let evolve the system without control and we remark that 
$$ \forall (t,x) \in [T_1,+\infty)\times \Omega,\ 0 \leq y(t,x) \leq G(t),$$
with $G$ independent of $\norme{y(T_1,.)}_{L^{\infty}(\Omega)}$ and $G(t) \rightarrow 0$ when $t \rightarrow + \infty$. This kind of argument has already been used by Jean-Michel Coron in the context of the Burgers equation (see \cite[Theorem 8]{Coron-Op}).\\
\indent Finally, by using the second step, for $T_2$ sufficiently large, $y(T_2,.)$ belongs to a small ball of $L^{\infty}(\Omega)$ centered at $0$, where the local null-controllability holds (see \Cref{ResLocal}). Then, one can steer $y(T_2,.)$ to $0$ with an appropriate choice of the control.

\section{Parabolic equations: Well-posedness and regularity}
The goal of this section is to state well-posedness results, dissipativity in time in $L^p$-norm, maximum principle and $L^p$-$L^q$ estimates for linear parabolic equations. We also give the definition of a solution to the semilinear heat equation \eqref{heatSL}. The references of these results only treat the case of Dirichlet boundary conditions but the proofs can be easily adapted to Neumann boundary conditions.

\subsection{Well-posedness}
We introduce the functional space 
\begin{equation}
\label{defiYspaceL2}
W_T := L^2(0,T;H^1(\Omega)) \cap H^1(0,T;(H^{1}(\Omega))'),
\end{equation}
which satisfies the following embedding (see \cite[Section 5.9.2, Theorem 3]{E})
\begin{equation}
\label{injclassique}
W_T \hookrightarrow C([0,T];L^2(\Omega)).
\end{equation}
\subsubsection{Linear parabolic equations}
\begin{defi}
Let $a \in L^{\infty}(Q_T)$, $F \in L^2(Q_T)$ and $y_0 \in L^{2}(\Omega)$. A function $y \in W_T$ is a solution to 
\begin{equation}
\left\{
\begin{array}{l l}
\partial_t y  - \Delta y + a(t,x) y =  F &\mathrm{in}\ (0,T)\times\Omega,\\
\frac{\partial y}{\partial n}= 0&\mathrm{on}\ (0,T)\times\partial\Omega,\\
y(0,.)=y_0& \mathrm{in}\ \Omega,
\end{array}
\right.
\label{eqlinaBis}
\end{equation}
if for every  $w \in L^2(0,T;H^1(\Omega))$,
\begin{equation}
\int_0^T (\partial_t y ,w)_{((H^{1}(\Omega))',H^1(\Omega))} + \int_{Q_T}  \nabla y .  \nabla w + \int_{Q_T} a y w = \int_{Q_T} F w,
\label{formvar}
\end{equation}
and
\begin{equation}
y(0,.) = y_0 \ \mathrm{in}\ L^2(\Omega).
\label{condinitl2}
\end{equation}
\end{defi}
The following well-posedness result in $L^2$ holds for linear parabolic equations.
\begin{prop}\label{wpl2}
Let $a \in L^{\infty}(Q_T)$, $F \in L^2(Q_T)$ and $y_0 \in L^{2}(\Omega)$. The Cauchy problem \eqref{eqlinaBis} admits a unique weak solution $y \in W_T $.
Moreover, there \\exists $ C = C(\Omega)>0$ such that
\begin{equation}
 \norme{y}_{W_T} \leq C \exp\left(CT \norme{a}_{L^{\infty}(Q_T)}\right) \left(\norme{y_0}_{L^{2}(\Omega)}+\norme{F}_{L^{2}(Q_T)}\right).
 \label{estl2faible}
\end{equation}
\end{prop}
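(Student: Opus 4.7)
The plan is to use the standard Faedo--Galerkin method together with an energy estimate and Gronwall's lemma. Let $(e_k)_{k \geq 1}$ be an $L^2(\Omega)$-orthonormal basis consisting of eigenfunctions of the Neumann Laplacian on $\Omega$, so in particular each $e_k \in H^1(\Omega)$. Denote by $V_m = \mathrm{span}(e_1,\dots,e_m)$ and define $y_m(t,\cdot) = \sum_{k=1}^m c_k^m(t) e_k$ as the solution of the finite-dimensional ODE system obtained by testing \eqref{formvar} against each $e_k$, with initial data the $L^2$-projection of $y_0$ onto $V_m$. The ODE system has bounded, measurable-in-time coefficients (because $a \in L^{\infty}(Q_T)$) and hence a unique absolutely continuous solution on $[0,T]$.

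Next I would derive a uniform energy estimate. Testing the Galerkin system against $y_m$ itself and using $\partial y_m/\partial n = 0$ yields
\begin{equation*}
\frac{1}{2}\frac{d}{dt}\norme{y_m(t)}_{L^2(\Omega)}^2 + \norme{\nabla y_m(t)}_{L^2(\Omega)}^2 = -\int_\Omega a(t,x)\, y_m^2 + \int_\Omega F(t,\cdot)\, y_m,
\end{equation*}
so that after Cauchy--Schwarz and the absorption $\int F y_m \leq \frac{1}{2}\norme{F}^2 + \frac{1}{2}\norme{y_m}^2$ one has
\begin{equation*}
\frac{d}{dt}\norme{y_m(t)}_{L^2(\Omega)}^2 + 2\norme{\nabla y_m(t)}_{L^2(\Omega)}^2 \leq (1+2\norme{a}_{L^{\infty}(Q_T)})\norme{y_m(t)}_{L^2(\Omega)}^2 + \norme{F(t,\cdot)}_{L^2(\Omega)}^2.
\end{equation*}
Gronwall's lemma applied to $\norme{y_m(t)}_{L^2}^2$ produces the bound $\norme{y_m}_{L^{\infty}(0,T;L^2(\Omega))}^2 \leq C e^{CT\norme{a}_{L^{\infty}(Q_T)}}(\norme{y_0}_{L^2}^2 + \norme{F}_{L^2(Q_T)}^2)$; integrating the differential inequality in time then yields the same bound for $\norme{\nabla y_m}_{L^2(Q_T)}^2$, and hence for $\norme{y_m}_{L^2(0,T;H^1(\Omega))}^2$.

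For the time-derivative component of the $W_T$-norm, I would use the variational formulation: for any $w \in H^1(\Omega)$ with $\norme{w}_{H^1}\leq 1$, $\langle \partial_t y_m, w\rangle = -\int_\Omega \nabla y_m \cdot \nabla w - \int_\Omega a y_m w + \int_\Omega F w$, which gives $\norme{\partial_t y_m(t)}_{(H^1)'} \leq \norme{\nabla y_m(t)}_{L^2} + \norme{a}_{L^{\infty}}\norme{y_m(t)}_{L^2} + \norme{F(t,\cdot)}_{L^2}$. Squaring and integrating in time combines with the previous estimate to yield the desired bound for $\norme{y_m}_{W_T}$, uniformly in $m$. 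Standard compactness and weak-limit arguments then extract a subsequence converging in $W_T$ to a limit $y$ which satisfies \eqref{formvar}; the continuous embedding \eqref{injclassique} ensures that \eqref{condinitl2} makes sense and is inherited from $y_m(0,\cdot) \to y_0$. Passing to the limit in the energy bound gives the quantitative estimate \eqref{estl2faible}. Finally, uniqueness follows from linearity: the difference of two solutions satisfies the same equation with $y_0 = 0$ and $F = 0$, and the energy estimate forces it to vanish.

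I do not expect any real obstacle: this is a textbook argument (essentially \cite[Section 7.1]{E} adapted to Neumann boundary conditions), and the only mildly delicate point is tracking the dependence on $\norme{a}_{L^{\infty}(Q_T)}$ explicitly through Gronwall to obtain the precise exponential prefactor $\exp(CT\norme{a}_{L^{\infty}(Q_T)})$, which follows directly from placing $\norme{a}_{L^{\infty}}$ in the Gronwall constant as done above.
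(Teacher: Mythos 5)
Your proposal is correct and follows exactly the route the paper indicates for this result: the paper does not write out a proof but simply states that it rests on Galerkin approximations, energy estimates and Gronwall's argument, citing \cite[Section 7.1.2]{E}, which is precisely what you carry out (with the right attention to how $\norme{a}_{L^{\infty}(Q_T)}$ enters the Gronwall constant). No further comparison is needed.
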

The proof of \Cref{wpl2} is based on \textit{Galerkin approximations, energy estimates and Gronwall's argument} (see \cite[Section 7.1.2]{E}).\\
\indent We also have the following classical $L^{\infty}$-estimate for \eqref{eqlinaBis}.
\begin{prop}\label{wplinfty}
Let $a \in L^{\infty}(Q_T)$, $F \in L^{\infty}(Q_T)$ and $y_0 \in L^{\infty}(\Omega)$. Then the solution $y$ of \eqref{eqlinaBis} belongs to $L^{\infty}(Q_T)$ and there exists $ C = C(\Omega)>0$ such that
\begin{equation}
\norme{y}_{L^{\infty}(Q_T)} \leq C \exp\left(CT \norme{a}_{L^{\infty}(Q_T)}\right) \left(\norme{y_0}_{L^{\infty}(\Omega)}+\norme{F}_{L^{\infty}(Q_T)}\right).
\label{estlinftyfaible}
\end{equation}
\end{prop}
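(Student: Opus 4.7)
The natural approach is the parabolic weak maximum principle, preceded by an exponential shift that converts the potential into a nonnegative one. Set $A := \norme{a}_{L^\infty(Q_T)}$ and $B := \norme{F}_{L^\infty(Q_T)}$. Introducing $z(t,x) := e^{-At}\,y(t,x)$, a direct computation shows that $z \in W_T$ (by \Cref{wpl2}, using $y_0 \in L^\infty(\Omega) \subset L^2(\Omega)$ and $F \in L^2(Q_T)$) is the weak solution of
\begin{equation*}
\partial_t z - \Delta z + (a+A)\,z \;=\; e^{-At} F \text{ in } Q_T, \qquad \partial z/\partial n = 0, \qquad z(0,\cdot) = y_0,
\end{equation*}
with the crucial feature that the shifted potential $a+A$ is nonnegative a.e.

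I then compare $z$ with the spatially-constant candidate $m(t) := \norme{y_0}_{L^\infty(\Omega)} + tB$. Since $m$ does not depend on $x$, it automatically satisfies the homogeneous Neumann condition and
\begin{equation*}
\partial_t m - \Delta m + (a+A)\,m \;=\; B + (a+A)\,m \;\geq\; B \;\geq\; |e^{-At}F|,
\end{equation*}
so $m$ is a supersolution and $-m$ a subsolution, both dominating $\pm y_0$ initially. To conclude $|z| \leq m$ a.e.\ on $Q_T$ I would apply Stampacchia's truncation: test the equation satisfied by $z-m$ against the admissible function $(z-m)^+ \in L^2(0,T;H^1(\Omega))$. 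The Neumann condition kills the boundary term, the diffusion contribution $\norme{\nabla (z-m)^+}_{L^2(\Omega)}^2$ is nonnegative, the potential term rewrites as $\int_\Omega (a+A)\,\bigl((z-m)^+\bigr)^2 \geq 0$ thanks to $a+A \geq 0$, and the source term $\int_\Omega (e^{-At}F - B)(z-m)^+$ is nonpositive. The $W_T$ chain rule then yields $\tfrac{d}{dt}\norme{(z-m)^+}_{L^2(\Omega)}^2 \leq 0$; combined with $(z-m)^+|_{t=0} = 0$ this forces $z \leq m$. Applying the same argument to $-z-m$ gives $-z \leq m$.

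Unwinding the shift, for a.e.\ $(t,x) \in Q_T$,
\begin{equation*}
|y(t,x)| \;=\; e^{At}|z(t,x)| \;\leq\; e^{AT}\bigl(\norme{y_0}_{L^\infty(\Omega)} + T\,\norme{F}_{L^\infty(Q_T)}\bigr),
\end{equation*}
and the polynomial factor in $T$ is absorbed into the exponential to recover the stated form $C\exp\bigl(CT\norme{a}_{L^\infty(Q_T)}\bigr)\bigl(\norme{y_0}_{L^\infty(\Omega)} + \norme{F}_{L^\infty(Q_T)}\bigr)$.

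The main technical point is the rigorous Stampacchia step at the level of weak $W_T$ solutions: one must justify the chain rule $\tfrac{1}{2}\tfrac{d}{dt}\norme{v^+}_{L^2(\Omega)}^2 = \langle \partial_t v, v^+ \rangle_{(H^1(\Omega))', H^1(\Omega)}$ for $v = z - m \in W_T$. This is a standard density argument (cf.\ \cite[Section 5.9, Theorem 3]{E}), but it is the one step of the proof that is not a formal computation and must be executed with care; everything else is immediate once this tool is available.
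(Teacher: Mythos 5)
Your argument is exactly the method the paper points to: the paper gives no proof of \Cref{wplinfty}, only the remark that it follows from Stampacchia's method as in \cite[Chapter 3, Paragraph 7, Theorem 7.1]{LSU}, and your exponential shift followed by testing against $(z-m)^{+}$ is precisely that truncation method. The structural steps are all sound: the shifted potential $a+\norme{a}_{L^{\infty}(Q_T)}$ is nonnegative, the constant-in-$x$ barrier $m(t)=\norme{y_0}_{L^{\infty}(\Omega)}+t\norme{F}_{L^{\infty}(Q_T)}$ is an admissible supersolution for the Neumann problem, and the chain rule for $\tfrac{d}{dt}\norme{v^{+}}_{L^2(\Omega)}^2$ with $v\in W_T$ is indeed the one step requiring the standard density lemma you identify.

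The one genuine flaw is the last line. You correctly obtain $\norme{y}_{L^{\infty}(Q_T)}\leq e^{AT}\left(\norme{y_0}_{L^{\infty}(\Omega)}+T\norme{F}_{L^{\infty}(Q_T)}\right)$ with $A=\norme{a}_{L^{\infty}(Q_T)}$, but the factor $T$ cannot be ``absorbed into the exponential'': that would require $T\leq C(\Omega)\,e^{(C-1)TA}$ uniformly in $T$ and $A$, which fails as $A\to 0$. In fact no absorption is possible: for $a=0$, $y_0=0$, $F\equiv 1$ the solution is $y(t,x)=t$, so $\norme{y}_{L^{\infty}(Q_T)}=T$ while the right-hand side of \eqref{estlinftyfaible} is the constant $C(\Omega)$. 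So the estimate as literally stated (with $C=C(\Omega)$ only) is itself slightly too strong; the honest conclusion of your proof is \eqref{estlinftyfaible} with $C(\Omega)$ replaced by $C(\Omega)(1+T)$, equivalently $C=C(\Omega,T)$. This is all the paper ever uses (in \Cref{LemComp} the source term vanishes, and elsewhere $T$ is fixed), so your proof stands once you state the constant correctly instead of claiming the spurious absorption.
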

The proof of \Cref{wplinfty} is based on \textit{Stampacchia's method} (see the proof of \cite[Chapter 3, Paragraph 7, Theorem 7.1]{LSU}).\\
\indent Let us also mention the dissipativity in time of the $L^p$-norm of the heat equation with a bounded potential.
\begin{prop}
\label{DissipProp}
Let $a \in L^{\infty}(Q_T)$, $y_0 \in L^{2}(\Omega)$ and $t_1 < t_2 \in [0,T]$. Then, there exists $C = C(\Omega)>0$ such that the solution $y \in W_T$ of \eqref{eqlinaBis} with $F=0$, satisfies for every $p \in [1,2]$,
\begin{equation}
\label{DissipEq}
\norme{y(t_2,.)}_{L^p(\Omega)} \leq C \exp\left(CT \norme{a}_{L^{\infty}(Q_T)}\right) \norme{y(t_1,.)}_{L^p(\Omega)}.
\end{equation}
\end{prop}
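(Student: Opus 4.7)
The plan is to obtain the dissipativity estimate via an energy-type computation on a smooth convex approximation of $|y|^p$, followed by Gronwall's lemma. For $p=2$ this is the classical energy estimate: testing \eqref{eqlinaBis} with $F=0$ against $y$ itself gives $\frac{1}{2}\frac{d}{dt}\norme{y}_{L^2(\Omega)}^2 + \norme{\nabla y}_{L^2(\Omega)}^2 + \int_\Omega a y^2 = 0$, whence \eqref{DissipEq} follows from Gronwall after dropping the nonnegative gradient term. My goal for general $p \in [1,2)$ is to mimic this computation while preserving the favorable sign of the diffusion.

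To this end I would introduce $\phi_\epsilon(s) := (s^2+\epsilon^2)^{p/2}$ for small $\epsilon > 0$. A direct computation gives
\begin{equation*}
\phi_\epsilon''(s) = p\,(s^2+\epsilon^2)^{p/2-2}\bigl[\epsilon^2 + (p-1) s^2\bigr] \geq 0 \quad \text{for } p \in [1,2],
\end{equation*}
so $\phi_\epsilon$ is a $C^2$, convex approximation of $|s|^p$ that decreases monotonically to $|s|^p$ as $\epsilon \to 0$. Using $y \in W_T$ and the chain rule, together with the Neumann boundary condition to discard the boundary contribution from integration by parts, I would derive in the distributional sense on $(0,T)$
\begin{equation*}
\frac{d}{dt}\int_\Omega \phi_\epsilon(y)\,dx = -\int_\Omega \phi_\epsilon''(y)\,|\nabla y|^2\,dx - \int_\Omega a(t,x)\,y\,\phi_\epsilon'(y)\,dx.
\end{equation*}
The convexity of $\phi_\epsilon$ lets me drop the first term, and the pointwise bound $s\phi_\epsilon'(s) = p s^2 (s^2+\epsilon^2)^{p/2-1} \leq p \phi_\epsilon(s)$ yields
\begin{equation*}
\frac{d}{dt}\int_\Omega \phi_\epsilon(y)\,dx \leq p \norme{a}_{L^\infty(Q_T)} \int_\Omega \phi_\epsilon(y)\,dx.
\end{equation*}

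Gronwall on $[t_1,t_2]$ then gives $\int_\Omega \phi_\epsilon(y(t_2,\cdot))\,dx \leq \exp\bigl(pT \norme{a}_{L^\infty(Q_T)}\bigr) \int_\Omega \phi_\epsilon(y(t_1,\cdot))\,dx$. Since $y(t_i) \in L^2(\Omega) \hookrightarrow L^p(\Omega)$ (the domain $\Omega$ being bounded) and $\phi_\epsilon(y) \downarrow |y|^p$ pointwise, monotone convergence as $\epsilon \to 0$ produces \eqref{DissipEq} with a constant $C$ absorbing the harmless factor $p \leq 2$. The main technical point to handle carefully is justifying the chain-rule identity at the regularity level $y \in W_T$ when $p \in (1,2)$, where $\phi_\epsilon'$ is only locally Lipschitz and the standard Lions-Magenes formula does not apply off-the-shelf: I would either work first with the smooth approximate solutions produced by the Galerkin scheme underlying \Cref{wpl2} and pass to the limit by continuous dependence, or truncate $\phi_\epsilon$ at height $M$ and let $M \to \infty$ after $\epsilon \to 0$. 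Either way, the limiting estimate is the one stated.
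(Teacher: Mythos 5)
Your argument is correct and is essentially the proof the paper has in mind: the paper's one-line sketch says to test the variational formulation \eqref{formvar} with a cut-off of $w=|y|^{p-2}y$ and apply Gronwall, and your test function $\phi_\epsilon'(y)=p\,y\,(y^2+\epsilon^2)^{p/2-1}$ is precisely such a regularization (note $\phi_\epsilon'$ is in fact globally Lipschitz with constant $p\,\epsilon^{p-2}$, so the chain-rule justification you flag is even less delicate than you suggest). The limit $\epsilon\to 0$ by monotone convergence then gives \eqref{DissipEq} exactly as you describe.
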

The proof of \Cref{DissipProp} is based on the application of the variational formulation \eqref{formvar} with a cut-off of $w = |y|^{p-2}y$ and a Gronwall's argument.
\subsubsection{Nonlinear parabolic equations}
We give the definition of a solution of \eqref{heatSL}.
\begin{defi}\label{defpropsolNL}
Let $y_0 \in L^{\infty}(\Omega)$, $h \in L^{\infty}(Q_T)$. A function $y \in W_T \cap L^{\infty}(Q_T)$ is the solution of \eqref{heatSL} if for every $w  \in L^2(0,T;H^1(\Omega))$,
\begin{align}
&\label{formvarNL1}  \int_0^T (\partial_t y ,w)_{((H^{1}(\Omega))',H^1(\Omega))} + \int_{Q_T}  \nabla y .  \nabla w  + \int_{Q_T} a y w =  \int_{Q_T} (f(y)+h1_{\omega}) w,
\end{align}
and 
\begin{equation}
\label{initNL}
y(0,.) = y_0\ \mathrm{in}\ L^{\infty}(\Omega).
\end{equation}
\end{defi}
The uniqueness of a solution to \eqref{heatSL} is an easy consequence of the fact that $f$ is locally Lipschitz because $f \in C^{1}(\R;\R)$.
\subsection{Maximum principle}
We state the maximum principle for the heat equation.
\begin{prop}
\label{propMaxPr}
Let $a \in L^{\infty}(Q_T)$, $F \leq G \in L^{2}(Q_T)$ and $y_0 \leq z_0 \in L^2(\Omega)$. Let $y$ and $z$ be the solutions to 
\small
\begin{equation}
\left\{
\begin{array}{ll}
\partial_t y  - \Delta y + a(t,x) y =  F,&\\
\frac{\partial y}{\partial n}=  0,&\\
y(0,.)=y_0,&
\end{array}
\right.
\left\{
\begin{array}{l l}
\partial_t z  - \Delta z + a(t,x) z =  G &\mathrm{in}\ (0,T)\times\Omega,\\
\frac{\partial z}{\partial n} = 0&\mathrm{on}\ (0,T)\times\partial\Omega,\\
z(0,.)=z_0& \mathrm{in}\ \Omega.
\end{array}
\right.
\label{CompyzProp}
\end{equation}
\normalsize
Then, we have the comparison principle
\begin{equation}
\label{comparaison}
\forall t \in [0,T],\ \text{a.e.}\ x \in \Omega,\ y(t,x) \leq z(t,x).
\end{equation}
\end{prop}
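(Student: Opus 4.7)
The plan is to reduce the comparison to a one-sided estimate on the difference. Setting $w := y - z \in W_T$, by linearity $w$ solves (in the variational sense of \eqref{formvar}) the Cauchy problem with right-hand side $F - G \leq 0$, zero Neumann boundary data, and initial datum $y_0 - z_0 \leq 0$. The goal is then to show $w^+ := \max(w,0) \equiv 0$ in $Q_T$.

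First I would justify that $w^+$ is an admissible test function in \eqref{formvar}. Since $w \in L^2(0,T;H^1(\Omega))$, the standard chain-rule for truncations gives $w^+ \in L^2(0,T;H^1(\Omega))$ with
\[
\nabla w^+ = \mathbf{1}_{\{w>0\}}\, \nabla w \quad \text{a.e. in } Q_T,
\]
and $w\, w^+ = (w^+)^2$ pointwise. Moreover, by the well-known chain rule for functions in $W_T$ (see e.g.\ \cite[Section 5.9.2]{E}), the map $t \mapsto \tfrac{1}{2}\|w^+(t)\|_{L^2(\Omega)}^2$ is absolutely continuous with
\[
\int_0^t (\partial_s w, w^+)_{((H^1)',H^1)}\, ds \;=\; \tfrac{1}{2}\|w^+(t)\|_{L^2(\Omega)}^2 - \tfrac{1}{2}\|w^+(0)\|_{L^2(\Omega)}^2,
\]
and the last term vanishes since $w(0) = y_0 - z_0 \leq 0$.

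Plugging $w^+ \mathbf{1}_{(0,t)}$ into the variational formulation for $w$ and using $F - G \leq 0$ together with $w^+ \geq 0$ yields
\[
\tfrac{1}{2}\|w^+(t)\|_{L^2(\Omega)}^2 + \int_0^t \!\!\int_\Omega |\nabla w^+|^2 \;\leq\; -\int_0^t\!\!\int_\Omega a\,(w^+)^2 \;\leq\; \|a\|_{L^\infty(Q_T)} \int_0^t \|w^+(s)\|_{L^2(\Omega)}^2\, ds,
\]
for every $t \in [0,T]$. Dropping the nonnegative gradient term and applying Gronwall's lemma to the scalar function $\varphi(t) := \|w^+(t)\|_{L^2(\Omega)}^2$ forces $\varphi \equiv 0$ on $[0,T]$. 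Hence $w^+(t,\cdot) = 0$ a.e.\ in $\Omega$ for every $t \in [0,T]$, which is precisely \eqref{comparaison}.

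The only delicate point is the justification of the chain rule for $w^+$ in the class $W_T$; once this standard parabolic fact is invoked, the proof reduces to a direct energy estimate closed by Gronwall. Neumann boundary conditions enter only implicitly (no boundary term appears in the integration by parts since $w^+ \in L^2(0,T;H^1(\Omega))$ is a legal test function), so the argument is insensitive to whether we work with Neumann or Dirichlet data.
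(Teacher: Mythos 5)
Your proof is correct, but it takes a genuinely different route from the paper's. The paper disposes of \Cref{propMaxPr} in one line: it invokes the comparison principle for \emph{classical} solutions of \eqref{CompyzProp} (citing \cite[Theorem 8.1.6]{WYW}) and then transfers the conclusion to weak solutions by regularizing the data and passing to the limit, using the stability estimate \eqref{estl2faible}. You instead argue directly at the level of $W_T$: set $w=y-z$, test the variational formulation \eqref{formvar} against the truncation $w^{+}$, and close an energy inequality with Gronwall. The computation is sound: $\nabla w\cdot\nabla w^{+}=|\nabla w^{+}|^{2}$ and $w\,w^{+}=(w^{+})^{2}$ almost everywhere, the source and initial terms have the favourable sign, and since $w^{+}\in C([0,T];L^{2}(\Omega))$ by \eqref{injclassique}, the vanishing of $\norme{w^{+}(t,.)}_{L^{2}(\Omega)}$ holds for \emph{every} $t$, exactly as \eqref{comparaison} demands. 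The one point you rightly single out --- the identity $\int_{0}^{t}(\partial_{s}w,w^{+})\,ds=\tfrac12\norme{w^{+}(t,.)}_{L^{2}(\Omega)}^{2}-\tfrac12\norme{w^{+}(0,.)}_{L^{2}(\Omega)}^{2}$ for $w\in W_T$ --- is standard for the Gelfand triple $H^{1}(\Omega)\hookrightarrow L^{2}(\Omega)\hookrightarrow (H^{1}(\Omega))'$ (apply the chain rule to the convex function $s\mapsto\tfrac12(s^{+})^{2}$, whose derivative is Lipschitz, or approximate $w$ by functions smooth in time); it plays in your argument the role that the regularization step plays in the paper's. Your version buys self-containedness: it needs no classical regularity theory and no approximation of $a$, $F$, $G$, $y_0$, $z_0$. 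The paper's version is shorter on the page at the cost of outsourcing the maximum principle to a classical-solutions reference and of a density argument that is stated rather than carried out.
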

The proof of \Cref{propMaxPr} is based on the comparison principle for smooth solutions of \eqref{CompyzProp} (see \cite[Theorem 8.1.6]{WYW}) and a regularization argument.\\
\indent We state a comparison principle for the semilinear heat equation \eqref{heatSL} without control $h$.
\begin{prop}
\label{SubSuper}
Let $y_0 \in L^{\infty}(\Omega)$, $h=0$. We assume that there exist a subsolution $\underline{y}$ and a supersolution $\overline{y}$ in $L^{\infty}(Q_T)$ of \eqref{heatSL}, i.e., $\underline{y}$ (respectively $\overline{y}$) satisfies \eqref{formvarNL1}, \eqref{initNL} replacing the equality $=$ by the inequality $\leq$ (respectively by the inequality $\geq$). Moreover, we suppose that $\underline{y}$ and $\overline{y}$ are ordered in the following sense
$$\forall t \in [0,T],\ \text{a.e.}\ x \in \Omega,\ \underline{y}(t,x) \leq \overline{y}(t,x).$$
Then, there exists a (unique) solution $y$ of \eqref{heatSL}. Moreover, $y$ satisfies the comparison principle
\begin{equation}
\label{comparaisonNL}
\forall t \in [0,T],\ \text{a.e.}\ x \in \Omega,\ \underline{y}(t,x) \leq y(t,x) \leq \overline{y}(t,x).
\end{equation}
\end{prop}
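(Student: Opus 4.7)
The plan is a standard truncation-plus-comparison argument. Let $M := \max(\norme{\underline{y}}_{L^\infty(Q_T)}, \norme{\overline{y}}_{L^\infty(Q_T)})$ and pick a truncated nonlinearity $\tilde f \in C^{0,1}(\R;\R)$ that coincides with $f$ on $[-M, M]$ and is globally bounded and Lipschitz on $\R$ (for example, extending $f$ by constants outside $[-M,M]$ and regularizing slightly). Note that $\tilde f(0) = f(0) = 0$. I then replace the nonlinearity in \eqref{heatSL} by $\tilde f$ and look for a solution of the truncated problem with Neumann boundary conditions and initial datum $y_0$.

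For existence in the truncated equation, I would set up a Picard iteration: let $y^{n+1} \in W_T$ solve the linear Cauchy problem $\partial_t y^{n+1} - \Delta y^{n+1} = -\tilde f(y^n)$ with $\partial y^{n+1}/\partial n = 0$ and $y^{n+1}(0,\cdot) = y_0$, starting from $y^0 \equiv y_0$. \Cref{wpl2} and \Cref{wplinfty} provide existence in $W_T \cap L^\infty(Q_T)$ together with uniform $L^\infty$ bounds (since $\tilde f$ is bounded). The Lipschitz property of $\tilde f$ and an energy estimate applied to $y^{n+1} - y^n$ yield a contraction on a short interval $[0, T_0]$; iterating provides a (necessarily unique) solution $y \in W_T \cap L^\infty(Q_T)$ on all of $[0, T]$.

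The heart of the proof is to show that this solution $y$ satisfies $\underline{y} \leq y \leq \overline{y}$. Setting $w := y - \overline{y}$, I would write
$$\tilde f(y) - f(\overline y) = \tilde f(y) - \tilde f(\overline y) = a(t,x)\, w, \qquad a(t,x) := \int_0^1 \tilde f'(\sigma y + (1 - \sigma) \overline y)\,d\sigma \in L^\infty(Q_T),$$
which is legitimate because $\overline y$ takes values in $[-M, M]$ where $\tilde f$ and $f$ agree. Subtracting the variational inequality defining $\overline y$ from the equation for $y$ gives
$$\partial_t w - \Delta w + a\, w \leq 0, \qquad \frac{\partial w}{\partial n} = 0, \qquad w(0, \cdot) \leq 0,$$
and \Cref{propMaxPr} (with $G = 0$, $z \equiv 0$, $z_0 = 0$, and potential $a$) forces $w \leq 0$, i.e., $y \leq \overline{y}$. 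The symmetric argument with $\underline{y} - y$ yields $\underline{y} \leq y$. Hence $|y(t,x)| \leq M$, so $\tilde f(y) = f(y)$ in $Q_T$ and $y$ is a solution of the original equation \eqref{heatSL}; uniqueness follows from the local Lipschitz character of $f$, as noted after \Cref{defpropsolNL}.

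The main obstacle I anticipate is the rigorous handling of this comparison step, because \Cref{propMaxPr} is stated as a comparison between two equality-solutions, whereas $\overline y$ only satisfies a variational inequality (for nonnegative test functions). Concretely one must re-do the proof of \Cref{propMaxPr} by testing the inequation for $w$ against $w_+ := \max(w, 0) \in L^2(0,T; H^1(\Omega))$; the chain rule $\int_0^T \psc{\partial_t w}{w_+}\,dt = \tfrac{1}{2}\norme{w_+(T)}_{L^2(\Omega)}^2 - \tfrac{1}{2}\norme{w_+(0)}_{L^2(\Omega)}^2$ combined with $|\int_{Q_T} a\, w\, w_+| \leq \norme{a}_{L^\infty(Q_T)} \int_{Q_T} w_+^2$ and a Gronwall argument then yields $w_+ \equiv 0$. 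This is routine but hinges on the sub/supersolution inequalities being interpreted with nonnegative test functions, which is the conventional meaning in this weak framework.
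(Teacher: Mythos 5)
Your argument is correct, but it is worth noting that the paper does not actually prove \Cref{SubSuper}: it simply cites \cite[Corollary 12.1.1]{WYW}, whose standard proof is the monotone (Sattinger) iteration scheme. There one picks $K>0$ so large that $s\mapsto Ks-f(s)$ is nondecreasing on $[-M,M]$, defines $y^{n+1}$ by solving the linear problem $\partial_t y^{n+1}-\Delta y^{n+1}+Ky^{n+1}=Ky^{n}-f(y^{n})$ starting from $y^{0}=\underline{y}$ (or $\overline{y}$), and uses the linear maximum principle to show the iterates are monotone and trapped between $\underline{y}$ and $\overline{y}$, so that they converge to a solution satisfying \eqref{comparaisonNL} by construction. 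Your route — truncate $f$ outside $[-M,M]$, get a unique solution of the truncated problem by a Banach contraction, and only then establish $\underline{y}\le y\le\overline{y}$ a posteriori by testing the differential inequality for $w=y-\overline{y}$ against $w_{+}$ — is a legitimate alternative and arguably cleaner here, since $f\in C^{1}$ makes the truncation globally Lipschitz for free and uniqueness comes out of the fixed point rather than as a separate step. You correctly identify the only genuinely delicate point, namely that \Cref{propMaxPr} compares two equality-solutions and must be re-proved for the sub/supersolution inequalities tested against nonnegative $w\in L^{2}(0,T;H^{1}(\Omega))$; your sketch of that step (chain rule for $w_{+}$, absorption of the potential term, Gronwall) is the right one, and the mean-value potential $a$ can be taken as the bounded difference quotient $(\tilde f(y)-\tilde f(\overline{y}))/(y-\overline{y})$ if one prefers not to regularize the truncation to $C^{1}$. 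The monotone iteration buys slightly more generality (it works for merely continuous, one-sided Lipschitz $f$ and produces extremal solutions), but for the hypotheses of this paper the two approaches are equivalent in strength.
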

For the proof of \Cref{SubSuper}, see \cite[Corollary 12.1.1]{WYW}.
\subsection{$L^p$-$L^q$ estimates}
We have the well-known regularizing effect of the heat semigroup.
\begin{prop}\label{L^pL^q}\cite[Proposition 3.5.7]{CaHa}\\
Let $1 \leq q \leq p \leq + \infty$, $y_0 \in L^2(\Omega)$ and $y$ be the solution to \eqref{eqlinaBis} with $(a,F)=(0,0)$. Then, there exists $C=C(\Omega,p,q)>0$ such that for every $t_1 < t_2 \in (0,T)$, we have
\begin{equation}
\label{EstiL^pL^q}
\norme{y(t_2,.)}_{L^p(\Omega)} \leq C(t_2-t_1)^{-\frac{N}{2}\left(\frac{1}{q}-\frac{1}{p}\right)}\norme{y(t_1,.)}_{L^q(\Omega)} 
\end{equation}
\end{prop}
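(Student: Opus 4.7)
The plan is to establish ultracontractivity of the Neumann heat semigroup $S(t) := e^{t\Delta_N}$ on $\Omega$ and then to interpolate. By uniqueness in Proposition~\ref{wpl2} applied on the interval $(t_1, t_2)$, one has $y(t_2,\cdot) = S(t_2 - t_1)\,y(t_1,\cdot)$, so it suffices to prove, for every $t \in (0,T)$ and every $1 \leq q \leq p \leq \infty$, the operator bound $\|S(t)\|_{L^q(\Omega) \to L^p(\Omega)} \leq C t^{-\frac{N}{2}(1/q - 1/p)}$. A first easy ingredient is $L^p$-contractivity: the maximum principle (Proposition~\ref{propMaxPr}) applied to $\pm u_0$ yields $\|S(t)u_0\|_{L^\infty} \leq \|u_0\|_{L^\infty}$, self-adjointness and duality give $\|S(t)\|_{L^1 \to L^1} \leq 1$, and Riesz--Thorin then yields $\|S(t)\|_{L^p \to L^p} \leq 1$ for every $p \in [1,\infty]$.

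The core step is the ultracontractivity estimate $\|S(t)\|_{L^1 \to L^\infty} \leq C t^{-N/2}$ for $t \in (0,T)$. I would derive it from a Nash-type inequality on the bounded $C^2$ domain,
$$\|u\|_{L^2(\Omega)}^{2+4/N} \leq C_\Omega \bigl(\|\nabla u\|_{L^2(\Omega)}^2 + \|u\|_{L^2(\Omega)}^2\bigr)\,\|u\|_{L^1(\Omega)}^{4/N}, \qquad u \in H^1(\Omega),$$
where the remainder $\|u\|_{L^2}^2$ is required to compensate the constant mode in the kernel of the Neumann Laplacian. Combining it with the energy identity $\tfrac{d}{dt}\|S(t)u_0\|_{L^2}^2 = -2\|\nabla S(t)u_0\|_{L^2}^2$ and the $L^1$-contractivity produces a Bernoulli-type differential inequality of the form $\varphi'(t) + c\,\varphi(t)^{1+2/N} \leq 2\varphi(t)$ for $\varphi(t) := \|S(t)u_0\|_{L^2}^2 / \|u_0\|_{L^1}^2$, which integrates on $(0,T)$ to $\varphi(t) \leq C\, t^{-N/2}$. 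Hence $\|S(t)\|_{L^1 \to L^2} \leq C t^{-N/4}$; by self-adjointness $\|S(t)\|_{L^2 \to L^\infty} \leq C t^{-N/4}$, and composing on the split $[0, t/2] \cup [t/2, t]$ gives the $L^1 \to L^\infty$ bound with the rate $t^{-N/2}$. A final application of Riesz--Thorin interpolation between $\|S(t)\|_{L^q \to L^q} \leq 1$ and $\|S(t)\|_{L^1 \to L^\infty} \leq C t^{-N/2}$ then yields \eqref{EstiL^pL^q}.

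The only technical subtlety, and the main obstacle in adapting the standard Dirichlet proof of \cite{CaHa} to the Neumann setting, is that the clean $\R^N$ Nash inequality fails on a bounded Neumann domain because $\ker(-\Delta_N) = \R$. This is precisely why the remainder $\|u\|_{L^2}^2$ must be retained; it introduces the benign linear term in the ODE for $\varphi$, contributing only a bounded multiplicative constant on the time interval $(0,T)$ without affecting the blow-up rate as $t \to 0^+$.
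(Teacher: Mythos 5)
The paper does not actually prove this proposition: it is quoted verbatim from Cazenave--Haraux \cite[Proposition 3.5.7]{CaHa}, with the blanket remark at the start of Section 3 that the cited references treat Dirichlet conditions and ``the proofs can be easily adapted'' to Neumann. Your argument is therefore a genuine addition rather than a reproduction, and it is essentially correct. The standard Dirichlet proof rests on Gaussian domination, $0 \leq e^{t\Delta_D}u_0 \leq (G_t * \widetilde{u_0})\restrict_{\Omega}$ for $u_0 \geq 0$, which fails for Neumann conditions; your route via the Nash inequality with the $\norme{u}_{L^2}^2$ remainder (forced by $\ker(-\Delta_N)=\R$), the Bernoulli differential inequality for $\varphi$, self-adjointness, and the $[0,t/2]\cup[t/2,t]$ splitting is the correct and self-contained way to obtain ultracontractivity in this setting. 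Your observation that the linear term $2\varphi$ only costs a factor $e^{2T}$ on $(0,T)$ is right, and it correctly exposes that the constant in \eqref{EstiL^pL^q} must in fact depend on $T$ (or on an upper bound for $t_2-t_1$) in the Neumann case, since $S(t)\mathbf{1}=\mathbf{1}$ rules out any bound $\norme{S(t)}_{L^1\to L^\infty}\leq Ct^{-N/2}$ with a time-uniform constant; the statement's $C=C(\Omega,p,q)$ silently absorbs this.

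One small bookkeeping slip in the last step: Riesz--Thorin between the two endpoints $\norme{S(t)}_{L^q\to L^q}\leq 1$ and $\norme{S(t)}_{L^1\to L^\infty}\leq Ct^{-N/2}$ does \emph{not} produce the $L^q\to L^p$ bound, because the interpolated source exponent $p_\theta$ given by $1/p_\theta=(1-\theta)/q+\theta$ equals $q$ only when $\theta=0$ or $q=1$. Either choose the diagonal endpoint at the auxiliary exponent $r=(1-1/q+1/p)\,p$ so that the interpolated pair is exactly $(q,p)$, or interpolate in two steps: first $L^1\to L^\infty$ against $L^\infty\to L^\infty$ to get $\norme{S(t)}_{L^q\to L^\infty}\leq Ct^{-N/(2q)}$, then against $\norme{S(t)}_{L^q\to L^q}\leq 1$ to reach $L^q\to L^p$ with exponent $-\frac{N}{2}\left(\frac{1}{q}-\frac{1}{p}\right)$. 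This is a two-line fix and does not affect the substance of the proof.
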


\section{Global nonnegative-controllability of the linear heat equation with a bounded potential}
\label{GlobalPosSection}
\subsection{Statement of the result}
Let $a \in L^{\infty}(Q_T)$. We consider the heat equation with a bounded potential
\begin{equation}
\left\{
\begin{array}{l l}
\partial_t y - \Delta y + a(t,x) y = h 1_{\omega}  &\mathrm{in}\ (0,T)\times\Omega,\\
\frac{\partial y}{\partial n}= 0&\mathrm{on}\ (0,T)\times\partial\Omega,\\
y(0,.)=y_0& \mathrm{in}\ \Omega,
\end{array}
\right.
\label{eqlin}
\end{equation}
and the following adjoint equation
\begin{equation}
\left\{
\begin{array}{l l}
-\partial_t q - \Delta q + a(t,x) q = 0  &\mathrm{in}\ (0,T)\times\Omega,\\
\frac{\partial q}{\partial n}= 0 &\mathrm{on}\ (0,T)\times\partial\Omega,\\
q(T,.)=q_T& \mathrm{in}\ \Omega.
\end{array}
\right.
\label{eqlinaAdj}
\end{equation}
\indent The goal of this section is to prove the following theorem.
\begin{theo}
\label{TheoControlInfty}
For every $T>0$, \eqref{eqlin} is globally nonnegative-controllable in time $T$. More precisely, for every $T>0$, there exists $C = C(\Omega,\omega,T,a) > 0$, with 
\begin{align}
\label{coutcontrolereg}
&C(\Omega,\omega,T,a) = \exp\left(C(\Omega,\omega)\left(1+\frac{1}{T}+T\norme{a}_{L^{\infty}(Q_T)} + \norme{a}_{L^{\infty}(Q_T)}^{1/2}\right)\right)
\end{align} 
such that for every $y_0 \in L^2(\Omega)$, there exists $h \in L^{\infty}(Q_T)$ such that
\begin{equation}
\label{EstiControlinfty}
\norme{h}_{L^{\infty}(Q_T)} \leq C(\Omega,\omega,T,a) \norme{y_0}_{L^2(\Omega)},
\end{equation}
and
\begin{equation}
\label{yTNeqLin}
y(T,.) \geq 0.
\end{equation}
\end{theo}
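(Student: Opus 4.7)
The plan is to deduce \Cref{TheoControlInfty} as a short duality consequence of the $L^{2}$--$L^{1}$ observability inequality for the adjoint equation \eqref{eqlinaAdj} restricted to nonnegative terminal data, announced in the introduction and to be established in the next section via a new $L^{1}$ Carleman estimate. Precisely, I take for granted that for every $q_T \in L^{2}(\Omega)$ with $q_T \geq 0$, the associated solution $q$ of \eqref{eqlinaAdj} satisfies
\[
\norme{q(0,\cdot)}_{L^{2}(\Omega)} \leq C(\Omega,\omega,T,a)\,\norme{q}_{L^{1}((0,T)\times\omega)},
\]
with $C(\Omega,\omega,T,a)$ exactly as in \eqref{coutcontrolereg}.

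Granted this observability, the key observation is that no HUM-type minimization is needed: because the sign restriction $q_T \geq 0$ forces the test functions to be nonnegative, a \emph{constant} control already suffices. I set
\[
M := C(\Omega,\omega,T,a)\,\norme{y_0}_{L^{2}(\Omega)}, \qquad h := M\,\mathbf{1}_{\omega},
\]
so that $h \in L^{\infty}(Q_T)$ with $\norme{h}_{L^{\infty}(Q_T)} = M$, and \eqref{EstiControlinfty} is immediate.

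To verify $y(T,\cdot) \geq 0$, I test against an arbitrary $q_T \in L^{2}(\Omega)$ with $q_T \geq 0$. Multiplying \eqref{eqlin} by the corresponding adjoint solution $q$, integrating over $Q_T$, and using the Neumann condition on both $y$ and $q$ to kill the boundary terms in Green's identity, one obtains the standard duality identity
\[
\int_{\Omega} y(T,x)\,q_T(x)\,dx \;=\; \int_{\Omega} y_0(x)\,q(0,x)\,dx \;+\; \int_{0}^{T}\!\!\int_{\omega} h\,q\,dx\,dt.
\]
Since \eqref{eqlinaAdj} is sourceless with nonnegative terminal data, the comparison principle \Cref{propMaxPr}, applied after time-reversal $\tau = T - t$, gives $q \geq 0$ on $Q_T$, so the last integral equals $M\,\norme{q}_{L^{1}((0,T)\times\omega)}$. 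On the other hand, Cauchy--Schwarz together with the observability inequality yields
\[
\int_{\Omega} y_0\,q(0,\cdot)\,dx \;\geq\; -\norme{y_0}_{L^{2}(\Omega)}\,\norme{q(0,\cdot)}_{L^{2}(\Omega)} \;\geq\; -C(\Omega,\omega,T,a)\,\norme{y_0}_{L^{2}(\Omega)}\,\norme{q}_{L^{1}((0,T)\times\omega)}.
\]
Summing these two bounds and using the definition of $M$,
\[
\int_{\Omega} y(T,\cdot)\,q_T \;\geq\; \bigl(M - C(\Omega,\omega,T,a)\,\norme{y_0}_{L^{2}(\Omega)}\bigr)\,\norme{q}_{L^{1}((0,T)\times\omega)} \;=\; 0.
\]
Testing against indicators $q_T = \mathbf{1}_{E}$ for arbitrary measurable $E \subset \Omega$ — in particular for $E = \{x \in \Omega : y(T,x) < 0\}$ — then forces $|E|=0$, hence $y(T,\cdot)\geq 0$ almost everywhere in $\Omega$.

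All the genuine difficulty therefore lies upstream, in producing the observability estimate with the sharp exponent $\norme{a}_{L^{\infty}(Q_T)}^{1/2}$; once that ingredient is granted, the passage from observability to the claimed nonnegative-controllability of \eqref{eqlin} is elementary and, thanks to the sign constraint on $q_T$, bypasses any HUM-type minimization.
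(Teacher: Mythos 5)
Your proof is correct, and it takes a genuinely shorter route than the paper. The paper follows a penalized HUM scheme adapted to the convex cone $L^2(\Omega;\R^{+})$: it minimizes the functional $J_{\varepsilon}$ of \eqref{Functional} over nonnegative terminal data, derives the one-sided variational inequality \eqref{inpos} from the Euler--Lagrange condition (the perturbations $q_{\varepsilon,T}+\lambda p_T$ with $\lambda>0$, $p_T\geq 0$ stay in the cone), bounds the controls $h_{\varepsilon}$ via the observability inequality \eqref{obsL^2L^1}, and passes to the limit $\varepsilon\to 0$ by weak-$*$ compactness; the endpoint is the same inequality $(y(T,.),p_T)_{L^2(\Omega)}\geq 0$ for all $p_T\geq 0$ that you reach. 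You instead exhibit the control outright --- the constant $M\,1_{\omega}$ with $M$ equal to the observability constant times $\norme{y_0}_{L^2(\Omega)}$ --- and check nonnegativity of $y(T,.)$ directly from the duality identity, the sign of $q$, and Cauchy--Schwarz, thereby bypassing the coercivity argument, the Aubin--Lions compactness step and the limiting procedure entirely. Every ingredient you invoke is available in the paper: the duality identity is \eqref{duality}, the nonnegativity of $q$ follows from \Cref{propMaxPr} after time reversal exactly as in the proof of \Cref{CarlL1}, and the bound $\norme{q(0,.)}_{L^2(\Omega)}\leq C\norme{q}_{L^1((0,T)\times\omega)}$ is \Cref{TheoObsL^2L^1} after taking square roots, the constant remaining of the form \eqref{coutcontrolereg}. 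What the HUM scheme buys is a control of (quasi-)minimal norm and a template that persists when constant controls would not suffice; what your argument buys is brevity and the observation that the control can be taken constant in time and space, a fact the paper only records as a remark after its proof.
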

\begin{rmk}
Actually, by looking carefully at the proof of \Cref{TheoControlInfty} (see \Cref{HumMethod} below), we can see that the control $h$ in \Cref{TheoControlInfty} can be chosen constant in the time and the space variables.
\end{rmk}
\begin{rmk}
It is well-known that \eqref{eqlin} is globally nonnegative-controllable in time $T$ because it is globally null-controllable in time $T$ (see \cite[Theorem 2]{FCGBGP}) but the most interesting point is the \textit{cost of nonnegative-controllability} given in \Cref{TheoControlInfty}. In particular, the exponent $1/2$ of the term $\norme{a}_{L^{\infty}(Q_T)}^{1/2}$ will be the key point to prove \Cref{TheoNeg} (see \Cref{Fixedpointsection}).
\end{rmk}

\subsection{A precise $L^2$-$L^1$ observability inequality for the linear heat equation with bounded potential and nonnegative initial data}
The proof of \Cref{TheoControlInfty} is a consequence of this kind of observability inequality.
\begin{theo}
\label{TheoObsL^2L^1}
For every $T >0$, there exists $C = C(\Omega,\omega,T,a) > 0$ of the form \eqref{coutcontrolereg} such that for every $q_T \in L^2(\Omega;\R^{+})$, the solution $q$ to \eqref{eqlinaAdj} satisfies 
\begin{equation}
\label{obsL^2L^1}
\norme{q(0,.)}_{L^2(\Omega)}^2 \leq C \left(\int_{0}^{T}\int_{\omega} q dx dt\right)^2.
\end{equation}
\end{theo}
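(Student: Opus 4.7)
The plan is to chain together four ingredients: the maximum principle, the $L^1$ Carleman estimate \Cref{CarlL1} (which is what provides the sharp $\exp(C\|a\|_{\infty}^{1/2})$ constant), the $L^1$-dissipativity \Cref{DissipProp}, and the $L^1$-to-$L^2$ heat-semigroup smoothing \Cref{L^pL^q}. First, I would apply \Cref{propMaxPr} to the forward-time function $\tilde q(s,x):=q(T-s,x)$, which satisfies a standard heat equation with bounded potential and nonnegative initial data $q_T\geq 0$; this forces $q\geq 0$ on all of $Q_T$. Consequently $\int_\omega q\,dx$ equals $\|q(t,\cdot)\|_{L^1(\omega)}$, and all integrands appearing below are nonnegative.

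Next, I would apply the $L^1$ Carleman estimate \Cref{CarlL1} to $q$. After bounding the space--time weights below on $[T/4,3T/4]\times\Omega$ and above on $(0,T)\times\omega$ by constants of the form $\exp(C(1+1/T+T\|a\|_{L^{\infty}(Q_T)}+\|a\|_{L^{\infty}(Q_T)}^{1/2}))$, this should yield
\begin{equation*}
\int_{T/4}^{3T/4}\int_\Omega q\,dx\,dt \;\leq\; C(\Omega,\omega,T,a) \int_0^T\int_\omega q\,dx\,dt.
\end{equation*}
To convert this into a pointwise-in-time bound, I use \Cref{DissipProp} with $p=1$ applied to $\tilde q$, which for any $T/2\leq t\leq 3T/4$ gives $\|q(T/2,\cdot)\|_{L^1(\Omega)}\leq C\exp(CT\|a\|_{L^{\infty}(Q_T)})\|q(t,\cdot)\|_{L^1(\Omega)}$. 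Averaging over $t\in[T/2,3T/4]$ and combining with the Carleman estimate produces
\begin{equation*}
\|q(T/2,\cdot)\|_{L^1(\Omega)}\;\leq\; C(\Omega,\omega,T,a)\int_0^T\int_\omega q\,dx\,dt.
\end{equation*}

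Finally, to upgrade from $L^1$ at $t=T/2$ to $L^2$ at $t=0$, I invoke the smoothing estimate \Cref{L^pL^q} applied to $\tilde q$ between times $s=T/2$ and $s=T$, so that $\tilde q(T,\cdot)=q(0,\cdot)$. Although \Cref{L^pL^q} is stated for zero potential, the general case follows by a standard Duhamel/gauge argument at the cost of an extra factor $\exp(CT\|a\|_{L^{\infty}(Q_T)})$. This gives
\begin{equation*}
\|q(0,\cdot)\|_{L^2(\Omega)}\leq C(T/2)^{-N/4}\exp(CT\|a\|_{L^{\infty}(Q_T)})\|q(T/2,\cdot)\|_{L^1(\Omega)}.
\end{equation*}
Chaining the three inequalities, squaring, and absorbing the polynomial prefactors in $T$ and $1/T$ into the exponential form \eqref{coutcontrolereg} yields \eqref{obsL^2L^1}.

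The main obstacle lies squarely in the $L^1$ Carleman inequality \Cref{CarlL1}: the classical $L^2$ Fursikov--Imanuvilov estimate only delivers a constant of order $\exp(C\|a\|_{L^{\infty}}^{2/3})$, and achieving the improved exponent $1/2$ for \emph{nonnegative} adjoint solutions is the central technical innovation and is precisely what drives the improvement from $\alpha\leq 3/2$ in \Cref{posanswer} to $\alpha\leq 2$ in \Cref{TheoNeg}. The remaining three steps above are, by contrast, essentially routine post-processing that use the maximum principle, dissipativity, and smoothing as black boxes.
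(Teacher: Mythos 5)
Your overall architecture coincides with the paper's: specialize the $L^1$ Carleman estimate \Cref{CarlL1} at $\lambda=1$, $s=s_1$, bound the weights from below on $(T/4,3T/4)\times\Omega$ and from above on $(0,T)\times\omega$, use the $L^1$-dissipativity (\Cref{DissipProp}) to pass to a pointwise-in-time $L^1$ bound at an interior time, and finally upgrade to the $L^2$ norm at $t=0$. The first three of these steps are carried out essentially as in the paper (which anchors at $t=T/4$ rather than $T/2$, an immaterial difference), and your identification of \Cref{CarlL1} as the sole source of the exponent $1/2$ is correct.

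The gap is in the last step. You assert that the $L^1$-to-$L^2$ smoothing of \Cref{L^pL^q} extends to the equation with potential ``by a standard Duhamel/gauge argument at the cost of an extra factor $\exp(CT\|a\|_{\infty})$.'' A one-shot Duhamel argument does not give this: writing $\widehat q(t)=S(t-t_0)\widehat q(t_0)+\int_{t_0}^{t}S(t-s)(-a\widehat q(s))\,ds$ and estimating the integral term in $L^2$ from $L^1$ data produces the kernel $(t-s)^{-N/4}$, which fails to be integrable near $s=t$ once $N\geq 4$. This is precisely why the paper's Step~2 is an iteration rather than a one-liner: it introduces the sequence of exponents $r_k$ of \eqref{defsequencep}, chosen so that each increment satisfies $\beta_k=\frac N2\left(\frac1{r_k}-\frac1{r_{k+1}}\right)\leq\frac12<1$, and climbs from $L^1$ to $L^{r_l}$ with $r_l\geq 2$ over the subinterval $(3T/4,T)$ in finitely many steps, using \Cref{DissipProp} to control $\|\widehat q(s)\|_{L^{r_k}}$ inside each Duhamel integral. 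Alternatively, your own first observation would rescue the step without iteration: since $q\geq 0$, the function $w(t):=e^{-\|a\|_{\infty}t}\,\widehat q(t)$ satisfies $\partial_t w-\Delta w\leq 0$, so by \Cref{propMaxPr} one has $0\leq w(t)\leq S(t-t_0)w(t_0)$, and \Cref{L^pL^q} applied to the majorant yields exactly your claimed bound with the factor $e^{T\|a\|_{\infty}}$. As written, however, the appeal to a ``standard'' argument is unjustified, and this is the one place where either the paper's bootstrap through the exponents $r_k$ or the positivity/comparison trick must actually be supplied.
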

An immediate corollary of \Cref{TheoObsL^2L^1} is this observability inequality $L^2$-$L^2$ that we state to discuss it below, but that will not be used in the
present article.
\begin{cor}
\label{TheoObsL^2L^2}
For every $T >0$, there exists $C = C(\Omega,\omega,T,a) > 0$ of the form \eqref{coutcontrolereg} such that for every $q_T \in L^2(\Omega;\R^{+})$ the solution $q$ to \eqref{eqlinaAdj} satisfies 
\begin{equation}
\label{obsL^2L^2}
\norme{q(0,.)}_{L^2(\Omega)}^2 \leq C \left(\int_{0}^{T}\int_{\omega} q^{2} dx dt\right).
\end{equation}
\end{cor}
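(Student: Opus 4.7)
The plan is to derive Corollary 3.3 directly from Theorem 3.2 by a single application of the Cauchy--Schwarz inequality on the spacetime cylinder $(0,T)\times\omega$. Given any nonnegative terminal datum $q_T\in L^2(\Omega;\R^{+})$, the maximum principle of \Cref{propMaxPr}, applied to the time--reversed problem $\tilde q(t,x):=q(T-t,x)$ (which solves a standard forward heat equation with bounded potential $\tilde a(t,x)=a(T-t,x)$ and initial datum $q_T\geq 0$), guarantees that the solution $q$ of \eqref{eqlinaAdj} is pointwise nonnegative on all of $Q_T$. In particular, $q\geq 0$ on the observation set $(0,T)\times\omega$, so the absolute value implicit in the $L^1$ term of \eqref{obsL^2L^1} is superfluous.

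Then I apply Cauchy--Schwarz in $L^2((0,T)\times\omega)$:
$$\left(\int_0^T\!\!\int_\omega q\,dx\,dt\right)^{2}=\left(\int_0^T\!\!\int_\omega 1\cdot q\,dx\,dt\right)^{2}\leq T\,|\omega|\int_0^T\!\!\int_\omega q^{2}\,dx\,dt,$$
and substitute into the $L^{2}$--$L^{1}$ observability inequality \eqref{obsL^2L^1} from \Cref{TheoObsL^2L^1}. This immediately produces
$$\norme{q(0,\cdot)}_{L^{2}(\Omega)}^{2}\leq C(\Omega,\omega,T,a)\cdot T\,|\omega|\int_0^T\!\!\int_\omega q^{2}\,dx\,dt,$$
which is precisely the desired estimate \eqref{obsL^2L^2} with new constant $C\,T\,|\omega|$.

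The only bookkeeping left is to verify that the prefactor $T\,|\omega|$ can be absorbed into an exponential of the form \eqref{coutcontrolereg}, after possibly enlarging the geometric constant $C(\Omega,\omega)$; this is routine since any polynomial factor in $T$ is dominated by an expression of that exponential shape. There is no substantial obstacle: the corollary is genuinely an immediate consequence of the $L^{2}$--$L^{1}$ observability estimate, and the whole argument fits in one display together with a trivial positivity remark.
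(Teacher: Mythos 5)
Your argument is correct and is exactly the route the paper intends: \Cref{TheoObsL^2L^2} is presented as an immediate consequence of \Cref{TheoObsL^2L^1}, the whole content being the Cauchy--Schwarz bound $\left(\int_0^T\int_\omega q\,dx\,dt\right)^2\leq T|\omega|\int_0^T\int_\omega q^2\,dx\,dt$ (which in fact needs no positivity of $q$ at all). The only overstatement is the claim that \emph{any} polynomial factor in $T$ is dominated by a constant of the literal form \eqref{coutcontrolereg} --- for $a=0$ that expression stays bounded as $T\to+\infty$ while the factor $T$ does not --- but this is a harmless bookkeeping point that the paper itself glosses over, and it disappears as soon as one fixes $T$ or enlarges the geometric constant for $T$ in a bounded range.
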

It is well-known that null-controllability in $L^2$ is equivalent to an observability inequality in $L^2$ for every $q_T \in L^2(\Omega;\R)$ (see \cite[Theorem 2.44]{C}). The \textit{main idea} behind \Cref{TheoObsL^2L^2} is the fact that nonnegative-controllability in $L^2$ is a consequence of an observability inequality in $L^2$ for every $q_T \in L^2(\Omega;\R^{+})$ (see \Cref{HumMethod}).
\begin{rmk}
It is interesting to mention that \eqref{obsL^2L^2} holds with $C$ of the form 
\begin{align}
\label{coutcontrolereg2/3}
&C(\Omega,\omega,T,a) = \exp\left(C(\Omega,\omega)\left(1+\frac{1}{T}+T\norme{a}_{L^{\infty}(Q_T)} + \norme{a}_{L^{\infty}(Q_T)}^{2/3}\right)\right)
\end{align} 
for every $q_T \in L^2(\Omega;\R)$ (see \cite[Theorem 2]{FCGBGP}). The exponent $2/3$ of the term $\norme{a}_{L^{\infty}(Q_T)}^{2/3}$ is the key point to prove \Cref{posanswer}. Note that the optimality of the exponent $2/3$ has been proved by Thomas Duyckaerts, Xu Zhang and Enrique Zuazua in the context of parabolic systems in even space dimensions $N \geq 2$ and with Dirichlet boundary conditions (see \cite[Theorem 1.1]{DZZ} and also \cite[Theorem 5.2]{Z-hand} for the main arguments of the proof). \Cref{TheoObsL^2L^2} shows that we can actually decrease the exponent $2/3$ to the exponent $1/2$ for nonnegative initial data. In some sense, we can make the connection between the recent preprint of Camille Laurent and Matthieu Léautaud who disprove the Miller's conjecture about the short-time observability constant of the heat equation in the general case and show that the conjecture holds true for nonnegative initial data by using Li-Yau estimates (see \cite{Lale} and \cite{LiYau}).
\end{rmk}
\begin{rmk}
In the context of the wave equation in one space dimension, the (optimal) constant of observability inequality for the linear wave equation with a bounded potential is actually $\exp\left(C\left(1+\norme{a}_{L^{\infty}(Q_T)}^{1/2}\right)\right)$ (see \cite[Theorem 4]{Zu-wave}) which leads to the exact controllability of the semilinear wave equation in large time for semilinearities satisfying \eqref{finfty} with $\alpha <2$ (see \cite[Theorem 1]{Zu-wave} and also \cite[Problem 5.5]{Blondel} for the presentation of the related open problem in the multidimensional case). Roughly speaking, as an ordinary differential argument would indicate, this constant of observability inequality is very natural because the wave operator is of order two in the time and the space variables. Then, by analogy and by taking into account that the heat operator is of order one in the time variable and of order two in the space variable, one could rather expect a constant of obervability inequality of the order $\exp\left(C\norme{a}_{L^{\infty}(Q_T)}\right)$ or $\exp\left(C\norme{a}_{L^{\infty}(Q_T)}^{1/2}\right)$ which seem to be more intuitive than the term $\exp\left(C\norme{a}_{L^{\infty}(Q_T)}^{2/3}\right)$.
\end{rmk}
\subsection{A new $L^1$ Carleman estimate}\label{SectionCarlL1}
The goal of this section is to establish a $L^1$ Carleman estimate for nonnegative initial data (see \Cref{CarlL1} below). First, we introduce some classical weight functions for proving Carleman inequalities.
\begin{lem}
\label{Lempoids}
Let $\omega_0 \subset \subset \omega$ be a nonempty open subset. Then there exists $\eta^0 \in C^2(\overline{\Omega})$ such that $\eta^0 >0$ in $\Omega$, $\eta^0 = 0$ in $\partial \Omega$, and $|\nabla \eta^0| >0 $ in $\overline{\Omega \setminus \omega_0}$.
\end{lem}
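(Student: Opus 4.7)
The plan is to construct $\eta^0$ in three stages: (i) build a preliminary candidate with the correct vanishing on $\partial\Omega$ and nonvanishing gradient in a collar of $\partial\Omega$; (ii) perturb it in the interior so that all its critical points become non-degenerate (hence finite by compactness); (iii) compose with a diffeomorphism of $\overline\Omega$ that is the identity near $\partial\Omega$ and pushes each of these critical points into $\omega_0$.

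For Step (i), I first fix a nonempty open $\omega_1$ with $\omega_1 \subset \subset \omega_0$. Because $\partial\Omega$ is $C^2$, the signed distance $d(\,\cdot\,,\partial\Omega)$ is $C^2$ on a tubular neighbourhood of $\partial\Omega$ in $\overline\Omega$; gluing it, via a partition of unity, with a smooth strictly positive function on the interior produces $\tilde\eta \in C^2(\overline\Omega)$ such that $\tilde\eta = 0$ on $\partial\Omega$, $\tilde\eta > 0$ in $\Omega$, and $\partial \tilde\eta / \partial n < 0$ on $\partial\Omega$. By continuity, $|\nabla \tilde\eta| > 0$ on the closure of some open collar $V$ of $\partial\Omega$; the compact set $K := \overline{\Omega \setminus V}$ is then contained in $\Omega$.

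For Step (ii), the critical points of $\tilde\eta$ (necessarily in $K$) need not be isolated. Invoking density of Morse functions in $C^2(K)$ (Sard's theorem together with Whitney approximation), for any $\varepsilon > 0$ I can find $\hat\eta \in C^2(\overline\Omega)$ with $\norme{\hat\eta - \tilde\eta}_{C^2(\overline\Omega)} < \varepsilon$, equal to $\tilde\eta$ outside a compact neighbourhood of $K$ in $\Omega$, and such that every interior critical point of $\hat\eta$ is non-degenerate. Choosing $\varepsilon$ small enough preserves both $\hat\eta > 0$ in $\Omega$ and $|\nabla \hat\eta| > 0$ on $\overline V$, so the set of critical points of $\hat\eta$ in $\overline\Omega$ is a finite subset $\{x_1,\ldots,x_k\}$ of $K \subset \Omega$.

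Finally, for Step (iii), I pick distinct points $y_1,\ldots,y_k \in \omega_1$ and disjoint simple $C^2$ paths in $\Omega$ from $x_i$ to $y_i$ (possible since $\Omega$ is open and connected), then integrate a compactly supported smooth vector field on $\Omega$ whose time-$1$ flow sends $x_i$ to $y_i$ (equivalently, invoke the isotopy extension theorem). This yields a $C^2$ diffeomorphism $\Phi : \overline\Omega \to \overline\Omega$ with $\Phi \equiv \mathrm{Id}$ near $\partial\Omega$ and $\Phi(x_i) = y_i$. I then set $\eta^0 := \hat\eta \circ \Phi^{-1}$: the boundary condition $\eta^0 = 0$ on $\partial\Omega$ holds because $\Phi$ is the identity there, positivity $\eta^0 > 0$ in $\Omega$ is preserved because $\Phi(\Omega) = \Omega$, and the critical set of $\eta^0$ equals $\{y_1, \ldots, y_k\} \subset \omega_1 \subset \subset \omega_0$, so $|\nabla \eta^0| > 0$ on the compact set $\overline{\Omega \setminus \omega_0}$ by continuity. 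The delicate point I expect to be the main obstacle is the coupling between Steps (ii) and (iii) with the boundary behaviour: any interior perturbation or diffeomorphism must be localised strictly inside $\Omega$, which is possible precisely because Step (i) already provides a collar $V$ where the gradient of the candidate does not vanish.
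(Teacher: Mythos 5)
The paper does not prove this lemma itself; it only cites \cite[Lemma 2.68]{C}, and your argument is essentially the standard proof given there: build a $C^2$ function vanishing on $\partial\Omega$, positive inside, with nonvanishing gradient in a boundary collar; perturb it on a compact interior set into a Morse function so the critical set is finite; and push those finitely many critical points into $\omega_0$ by a diffeomorphism of $\overline{\Omega}$ equal to the identity near $\partial\Omega$. Your construction is correct, including the key localisation point that both the Morse perturbation and the isotopy must be supported away from the collar where the gradient is already controlled.
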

A proof of this lemma can be found in \cite[Lemma 2.68]{C}.\\
\indent Let $\omega_0$ be a nonempty open set satisfying $\omega_0 \subset \subset \omega$ and let us set
\begin{equation}
\label{defalpha}
\alpha(t,x) := \frac{e^{2\lambda \norme{\eta^0}_{\infty}}-e^{\lambda\eta^{0}(x)}}{t(T-t)},
\end{equation}
\begin{equation}
\label{defxi}
\xi(t,x) := \frac{e^{\lambda\eta^{0}(x)}}{t(T-t)},
\end{equation}
for $(t,x) \in Q_T$, where $\eta^0$ is the function provided by \Cref{Lempoids} for this $\omega_0$ and $\lambda \geq 1$ is a parameter. \\
\indent We have the following new $L^1$ Carleman estimate.
\begin{theo}
\label{CarlL1}
There exist two constants $C:=C(\Omega,\omega)>0$ and $C_1 := C_1(\Omega,\omega)>0$, such that,
\begin{equation}
\forall \lambda \geq 1,\qquad \forall s \geq s_1(\lambda):=C(\Omega,\omega)e^{4\lambda \norme{\eta^0}_{\infty}}\left(T+T^{2}+T^2 \norme{a}_{L^{\infty}(Q_T)}^{1/2}\right),
\label{deflambd1s1}
\end{equation}
for every $q_T \in L^2(\Omega;\R^{+})$, the nonnegative solution $q$ of \eqref{eqlinaAdj} satisfies
\begin{align}
\label{carl}
\int_{Q_T} e^{-s\alpha}\xi^2 q dx dt\leq C_1\int_{(0,T)\times\omega} e^{-s\alpha} \xi^2 q  dxdt.
\end{align}
\end{theo}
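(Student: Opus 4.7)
The strategy is to test the adjoint equation directly against the pure Carleman weight $\phi := e^{-s\alpha}$ (with no extra $\xi$-factor, in contrast to the multiplier $(s\xi)^{3/2}e^{-s\alpha}\psi$ that underlies the classical $L^2$ Carleman), and to convert the resulting integral identity into a one-sided inequality by exploiting the nonnegativity $q \geq 0$. The latter follows from \Cref{propMaxPr} applied to the data $q_T \geq 0$ (with bounded $a$ and no source).

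Concretely, I multiply $-\partial_t q - \Delta q + a q = 0$ by $\phi = e^{-s\alpha}$ and integrate over $Q_T$. Since $\alpha \to +\infty$ at both $t=0^+$ and $t=T^-$, the function $\phi$ vanishes there and the time boundary terms disappear. The two spatial integrations by parts, using $\partial_n q = 0$, leave only a contribution $-\int_{(0,T)\times \partial \Omega} q\, \partial_n \phi$; since $\partial_n \phi = s\lambda \xi\, e^{-s\alpha}\, \partial_n \eta^0$ and $\partial_n \eta^0 \leq 0$ on $\partial\Omega$ (because $\eta^0 > 0$ in $\Omega$ vanishes on $\partial\Omega$ by \Cref{Lempoids}), this boundary term is nonpositive and, combined with $q \geq 0$, yields
\[
\int_{Q_T} q\, e^{-s\alpha}\bigl[-s\alpha_t - s^2 |\nabla \alpha|^2 + s\Delta\alpha + a\bigr]\,dx\,dt \;\geq\; 0.
\]
Using $\nabla \alpha = -\lambda \xi \nabla \eta^0$, the good term $s^2|\nabla\alpha|^2 = s^2\lambda^2 \xi^2 |\nabla\eta^0|^2$ is bounded below by $c\,s^2 \lambda^2\xi^2$ on $\Omega \setminus \omega_0$, while the explicit bounds $|\alpha_t| \leq CT\, e^{2\lambda\norme{\eta^0}_{\infty}} \xi^2$, $|\Delta\alpha| \leq C\lambda^2 \xi$ together with the trivial $\xi \geq 4/T^2$ give
\[
\bigl|-s\alpha_t + s\Delta \alpha + a\bigr| \;\leq\; C\bigl[sT e^{2\lambda\norme{\eta^0}_{\infty}} + s\lambda^2 T^2 + \norme{a}_{L^{\infty}(Q_T)} T^4\bigr]\,\xi^2.
\]

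The threshold $s \geq s_1(\lambda)$ from the statement is exactly what makes this bracket smaller than $\tfrac{c}{2}\,s^2 \lambda^2$. The critical absorption is $\norme{a}_{L^{\infty}(Q_T)} T^4 \leq \tfrac{c}{2}\,s^2 \lambda^2$, which forces $s \geq CT^2\,\norme{a}_{L^{\infty}(Q_T)}^{1/2}$ and is precisely the source of the improved exponent $1/2$. Splitting $\Omega = (\Omega\setminus \omega_0) \cup \omega_0$, dropping the nonnegative $\omega_0$ piece from the good term, and absorbing half of $\int_{Q_T \setminus \omega_0} qe^{-s\alpha}\xi^2$ back into the left-hand side then gives the announced inequality with $C_1 = 2$, after $\omega_0 \subset\subset \omega$ is used to replace $\omega_0$ by $\omega$ on the right. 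The main obstacle — and the conceptual novelty — lies in the choice of multiplier: with the classical $L^2$-Carleman weight the good term is of order $s^3\xi^3$ and the absorption of the potential costs $\norme{a}_{L^{\infty}(Q_T)}^{2/3}$, whereas the pure weight $\phi = e^{-s\alpha}$ produces a quadratic-in-$s$ good term and absorbs $a$ at the cheaper rate $\norme{a}_{L^{\infty}(Q_T)}^{1/2}$. This economy is available only thanks to the sign hypothesis $q_T \geq 0$: without it, the identity $\int q\,(\partial_t - \Delta + a)\phi = 0$ cannot be read as a one-sided bound on the bulk.
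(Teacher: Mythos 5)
Your proof is correct, and its core mechanism is exactly the paper's: integrate the first-order Carleman identity once (rather than squaring it), use the nonnegativity of $q$ to read the result as a one-sided bound, and absorb the zeroth-order potential term into the good term $s^{2}\lambda^{2}\xi^{2}|\nabla\eta^{0}|^{2}\,e^{-s\alpha}q$, which is quadratic in $s$ and therefore costs only $s\gtrsim T^{2}\norme{a}_{L^{\infty}(Q_T)}^{1/2}$ — testing the equation for $q$ against $e^{-s\alpha}$ is the same computation as the paper's integration of $M\psi=0$ with $\psi=e^{-s\alpha}q$, up to which factor the derivatives land on. Where you genuinely depart from the paper is in the boundary terms. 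The paper does not use their sign: it runs the whole argument a second time with the reflected weight $\widetilde{\alpha}$ built from $-\eta^{0}$ (Steps 3--5) and sums the two estimates so that the boundary integrals cancel identically, in the spirit of the Fursikov--Imanuvilov doubling device. You instead observe that the single surviving boundary integral $\int_{\Sigma_T} q\,\partial_n\phi$ has a favourable sign because $\partial\eta^{0}/\partial n\le 0$ on $\partial\Omega$ (since $\eta^{0}>0$ in $\Omega$ and $\eta^{0}=0$ on $\partial\Omega$ by \Cref{Lempoids}) and $q\ge 0$; this lets you discard it outright and shortens the proof by half. This is a legitimate and arguably cleaner route for the Neumann problem — indeed the combined boundary contribution in the paper's intermediate estimate \eqref{IntegrateFour} equals $-\int_{\Sigma_T}s\lambda\xi\,(\partial\eta^{0}/\partial n)\,\psi\ge 0$ and could likewise have been dropped — while the doubling trick has the advantage of not relying on the sign of the normal derivative. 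Two small points to tidy up: the boundary contribution actually produced by the two spatial integrations by parts is $+\int_{\Sigma_T}q\,\partial_n\phi$ (which is indeed nonpositive), not $-\int_{\Sigma_T}q\,\partial_n\phi$ as written, though your displayed inequality is nevertheless correct; and, as in the paper, you should first take $q_T\in C_c^{\infty}(\Omega;\R^{+})$ so that parabolic regularity justifies the integrations by parts and the finiteness of all integrals, and conclude by density.
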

\begin{proof} 
Unless otherwise specified, we denote by $C$ various positive constants varying from line to line which may depend on $\Omega$, $\omega$ but independent of the parameters $\lambda$ and $s$.\\
\indent We introduce other weights which are similar to $\alpha$ and $\xi$
\begin{equation}
\label{defalphat}
\widetilde{\alpha}(t,x) := \frac{e^{2\lambda \norme{\eta^0}_{\infty}}-e^{-\lambda\eta^{0}(x)}}{t(T-t)},
\end{equation}
\begin{equation}
\label{defxit}
\widetilde{\xi}(t,x) := \frac{e^{-\lambda\eta^{0}(x)}}{t(T-t)}.
\end{equation}
The following estimates
\begin{equation}
\label{UsefulEstimCarl}
\begin{array}{llll}
\displaystyle|\partial_i \alpha| = |-\partial_i \xi| &\leq C \lambda \xi,&|\partial_i \widetilde{\alpha}| =| -\partial_i \widetilde{\xi}| \displaystyle&\leq C \lambda \widetilde{\xi},\\
\displaystyle|\partial_t \alpha| &\leq  2 T  \xi^{2}e^{2\lambda \norme{\eta^0}_{\infty}},&\displaystyle |\partial_t \widetilde{\alpha}| &\leq  2 T  \widetilde{\xi}^{2}e^{4\lambda \norme{\eta^0}_{\infty}},\\
\displaystyle \xi (T/2)^{2} &\geq 1, & \widetilde{\xi} (T/2)^{2} &\geq e^{-\lambda \norme{\eta^0}_{\infty}},
\end{array}
\end{equation}
will be very useful for the proof.\\
\indent Let $q_T \in C_0^{\infty}(\Omega;\R^{+})$. The general case comes from an easy density argument by using the fact that $C_c^{\infty}(\Omega;\R^{+})$ is dense in $L^2(\Omega;\R^{+})$ for the $L^2(\Omega;\R)$ topology.\\
\indent The solution $q$ of \eqref{eqlinaAdj} is nonnegative by applying the maximum principle given in \Cref{propMaxPr} with $y=0$ and $z(t,x)=q(t-T,x)$.\\
\indent We define 
$$\psi := e^{-s\alpha} q\qquad \text{and}\qquad \widetilde{\psi} := e^{-s\widetilde{\alpha}} q.$$
\indent The proof is divided into five steps:
\begin{itemize}
\item \textbf{Step 1:} We integrate over $(0,T)\times \Omega$ an identity satisfied by $\psi$.
\item \textbf{Step 2:} We get an estimate which looks like to \eqref{carl} up to some boundary terms.
\item \textbf{Step 3:} We repeat the step 1 for $\widetilde{\psi}$.
\item \textbf{Step 4:} We repeat the step 2 for $\widetilde{\psi}$.
\item \textbf{Step 5:} We sum the estimates of the step 2 and the step 4 to get rid of the boundary terms.
\end{itemize}
\begin{rmk}
The ‘trick’ of the proof to get rid of the boundary terms is inspired by the proof of the usual $L^2$ Carleman estimate for Neumann boundary conditions due to Andrei Fursikov and Oleg Imanuvilov (see \cite[Chapter 1]{FI} and also \cite[Appendix]{FCGBGP}).
\end{rmk}
\indent \textbf{Step 1: An identity satisfied by $\psi$.} We readily obtain that
\begin{equation}
\label{eqM}
M \psi  = 0,
\end{equation}
where 
\begin{align}
\label{M}
\displaystyle M \psi & \displaystyle=-s \lambda^2  |\nabla \eta^0|^2 \xi \psi -2s \lambda \xi \nabla \eta^0 . \nabla \psi + \partial_t \psi\\
\displaystyle&\displaystyle\qquad + s^{2} \lambda^2 |\nabla \eta^0|^2 \xi^2 \psi + \Delta \psi + s \alpha_t \psi-a(t,x) \psi \notag\\
\displaystyle&\displaystyle\qquad - s \lambda \Delta \eta^0 \xi \psi.\notag
\end{align}
\begin{rmk}
The starting point, i.e., the identity \eqref{eqM} is the same as in the classical proof developed by Andrei Fursikov and Oleg Imanuvilov in \cite{FI} (see also \cite[Proof of Lemma 1.3]{FCG} or \cite[Section 7]{LLR}). But, from now, the proof strategy of the $L^1$-Carleman estimate is very different from the usual one of the $L^2$-Carleman estimate. Indeed, we will focus on the fourth right hand side term of \eqref{M}
$$ s^{2} \lambda^2 |\nabla \eta^0|^2 \xi^2 \psi.$$
It is nonnegative because $\psi$ is nonnegative and it is of order two in the parameter $s$ whereas the seventh right hand side term of \eqref{M}
$$ a(t,x) \psi,$$
is of order $0$ in the parameter $s$. This comparison suggests to integrate the identity \eqref{eqM} in order to obtain \eqref{carl} for $\lambda \geq 1$ and $s \geq s_1(\lambda)$ as defined in \eqref{deflambd1s1}.
\end{rmk}
\indent We integrate \eqref{eqM} over $(0,T)\times \Omega$
\begin{equation}
\label{Integrate}
\begin{array}{ll}
&\displaystyle\int_{Q_T} s^{2} \lambda^2 |\nabla \eta^0|^2 \xi^2 \psi - \int_{Q_T} 2s \lambda \xi \nabla \eta^0 . \nabla \psi  +\int_{Q_T}  \partial_t \psi +\int_{Q_T} \Delta \psi \\
&= \displaystyle \int_{Q_T} s \lambda^2  |\nabla \eta^0|^2 \xi \psi -   \int_{Q_T} s \alpha_t \psi+  \int_{Q_T}a(t,x) \psi \\
& \displaystyle \qquad +\int_{Q_T} s \lambda \Delta \eta^0 \xi \psi.
\end{array}
\end{equation}
Note that all the terms in \eqref{Integrate} are well-defined. Indeed, by using $q_T \in C_c^{\infty}(\Omega)$ and the parabolic regularity in $L^2$ to \eqref{eqlinaAdj} (see \cite[Theorem 2.1]{DHP}), we deduce that $q \in X_2 := L^2(0,T;H^2(\Omega))\cap H^1(0,T;L^2(\Omega))$ then $\psi \in X_2$.\\
\indent \textbf{Step 2: Estimates for $\psi$.} As a consequence of the properties of $\eta^{0}$ (see \Cref{Lempoids}), we have 
\begin{equation}
m := \min\left\{ |\nabla \eta^{0}(x)|^{2}\ ;\ x \in \overline{\Omega \setminus \omega_0}\right\}>0,
\label{defminm}
\end{equation}
which yields
\begin{align}
\label{SepInt}
&\int_{Q_T} s^{2} \lambda^2 |\nabla \eta^0|^2 \xi^2 \psi\\
& \geq\int_{(0,T)\times(\Omega\setminus\omega)} s^{2} \lambda^2 |\nabla \eta^0|^2 \xi^2 \psi \geq m \int_{Q_T} s^{2} \lambda^2 \xi^2 \psi - m \int_{(0,T)\times\omega} s^{2} \lambda^2  \xi^2 \psi. \notag
\end{align}
By combining \eqref{Integrate} and \eqref{SepInt}, we have 
\begin{equation}
\label{IntegrateBis}
\begin{array}{ll}
&\displaystyle m\int_{Q_T} s^{2} \lambda^2 \xi^2 \psi - \int_{Q_T} 2s \lambda \xi \nabla \eta^0 . \nabla \psi  +\int_{Q_T}  \partial_t \psi +\int_{Q_T} \Delta \psi \\
&\leq \displaystyle \int_{Q_T} s \lambda^2  |\nabla \eta^0|^2 \xi \psi + \int_{Q_T} s |\alpha_t| \psi+  \int_{Q_T}|a(t,x)| \psi \\
& \displaystyle \qquad +\int_{Q_T} s \lambda |\Delta \eta^0|\xi \psi + m \int_{(0,T)\times\omega} s^{2} \lambda^2 \xi^2 \psi.
\end{array}
\end{equation}
We have the following integration by parts
\begin{align}
\label{IntByParts1}
\begin{array}{ll}
\displaystyle-\int_{Q_T} 2s \lambda \xi \nabla \eta^0 . \nabla \psi =   \int_{Q_T} 2s \lambda\left(\nabla \xi . \nabla \eta^{0} \psi  + \xi \Delta \eta^{0} \psi  \right) -\int_{\Sigma_T} 2 s \lambda \xi \frac{\partial \eta^{0}}{\partial n} \psi d\sigma dt,
\end{array}
\end{align}
\begin{equation}
\int_{Q_T}  \partial_t \psi  =  \int_{\Omega} (\psi(T,.) - \psi(0,.) ) = 0,
\label{IntByParts2}
\end{equation}
\begin{equation}
\int_{Q_T} \Delta \psi = \int_{\Sigma_T}\frac{\partial\psi}{\partial n},
 \label{IntByParts3}
\end{equation}
where $\Sigma_T := (0,T)\times\partial\Omega$.\\
\indent From \eqref{IntegrateBis}, \eqref{IntByParts1}, \eqref{IntByParts2}, \eqref{IntByParts3}, we have
\begin{equation}
\label{IntegrateTri}
\begin{array}{ll}
&\displaystyle m \int_{Q_T} s^{2} \lambda^2 \xi^2 \psi -\int_{\Sigma_T} 2 s \lambda \xi \frac{\partial \eta^{0}}{\partial n} \psi+\int_{\Sigma_T}\frac{\partial\psi}{\partial n}\\
&\leq \displaystyle \int_{Q_T} s \lambda^2  |\nabla \eta^0|^2 \xi \psi + \int_{Q_T} s |\alpha_t| \psi+  \int_{Q_T}|a(t,x)| \psi \\
& \displaystyle \qquad +\int_{Q_T} 3 s \lambda |\Delta \eta^0|\xi \psi + \int_{Q_T} 2s \lambda|\nabla \xi| | \nabla \eta^{0}| \psi + m\int_{(0,T)\times\omega} s^{2} \lambda^2 \xi^2 \psi.
\end{array}
\end{equation}
\indent By using the first two lines of \eqref{UsefulEstimCarl} and $\lambda \geq 1$, we have 
\begin{align}
\begin{array}{ll}
\label{EstimUsefulBis}
&\displaystyle\int_{Q_T} s \lambda^2  |\nabla \eta^0|^2 \xi \psi + \int_{Q_T} s |\alpha_t| \psi+  \int_{Q_T}|a(t,x)| \psi\\
& \displaystyle+\int_{Q_T} 3 s \lambda |\Delta \eta^0|\xi \psi + \int_{Q_T} 2s \lambda|\nabla \xi| | \nabla \eta^{0}| \psi 
\\
& \displaystyle\quad\leq C \left( \int_{Q_T} s \lambda^2  \xi \psi + \int_{Q_T} s e^{2\lambda \norme{\eta^0}_{\infty}} T  \xi^{2} \psi +  \int_{Q_T} |a(t,x)| \psi +\int_{Q_T} s \lambda \xi \psi  \right)\\
& \displaystyle\quad\leq C \left( \int_{Q_T} s \lambda^2  \xi \psi + \int_{Q_T} s e^{2\lambda \norme{\eta^0}_{\infty}} T  \xi^{2} \psi +  \int_{Q_T} |a(t,x)| \psi  \right).
\end{array}
\end{align}
By combining \eqref{IntegrateTri} and \eqref{EstimUsefulBis}, we get
\begin{equation}
\label{IntegrateTriBis}
\begin{array}{ll}
&\displaystyle m \int_{Q_T} s^{2} \lambda^2 \xi^2 \psi -\int_{\Sigma_T} 2 s \lambda \xi \frac{\partial \eta^{0}}{\partial n} \psi+\int_{\Sigma_T}\frac{\partial\psi}{\partial n}\\
&\leq \displaystyle\quad\leq C \left( \int_{Q_T} s \lambda^2  \xi \psi + \int_{Q_T} s e^{2\lambda \norme{\eta^0}_{\infty}} T  \xi^{2} \psi +  \int_{Q_T} |a(t,x)| \psi  \right).
\end{array}
\end{equation}
\indent \textbf{Absorption.} The goal of this intermediate step is to absorb the right hand side of \eqref{IntegrateTriBis} by the first left hand side term of \eqref{IntegrateTriBis} by taking $s$ sufficiently large. In order to do this, it is useful to keep in mind the fact that $\lambda \geq 1$ and the third line of \eqref{UsefulEstimCarl} for the next estimates.\\
\indent  By taking $s \geq (T/2)^{2} (4C /m)$, we have $C s \xi \leq (m/4) (s \xi)^{2}$ and consequently
\begin{equation}
\label{Absorb1}
C \int_{Q_T} s \lambda^2  \xi \psi \leq \frac{m}{4} \int_{Q_T} s^{2} \lambda^2 \xi^2 \psi.
\end{equation}
By taking $s \geq T e^{2\lambda \norme{\eta^0}_{\infty}} (4C /m)$, we have $C s e^{2\lambda \norme{\eta^0}_{\infty}} T  \xi^{2} \leq (m/4)(\lambda s \xi)^{2}$ and consequently
\begin{equation}
\label{Absorb2}
C \int_{Q_T} s e^{2\lambda \norme{\eta^0}_{\infty}} T  \xi^{2} \psi \leq \frac{m}{4} \int_{Q_T} s^{2} \lambda^2 \xi^2 \psi.
\end{equation}
By taking $s \geq (T/2)^{2} \norme{a}_{L^{\infty}(Q_T)}^{1/2} (4C /m)^{1/2}$, we have $C \norme{a}_{L^{\infty}(Q_T)} \leq (m/4)(\lambda s \xi)^{2}$ and consequently
\begin{equation}
\label{Absorb3}
C \int_{Q_T} |a(t,x)| \psi \leq \frac{m}{4} \int_{Q_T} s^{2} \lambda^2 \xi^2 \psi.
\end{equation}
Therefore, by taking $s \geq s_1(\lambda)$ as defined in \eqref{deflambd1s1}, we have from \eqref{Absorb1}, \eqref{Absorb2} and \eqref{Absorb3} that 
\begin{equation}
\label{AbsorbTotal}
C \left( \int_{Q_T} s \lambda^2  \xi \psi + \int_{Q_T} s e^{2\lambda \norme{\eta^0}_{\infty}} T  \xi^{2} \psi +  \int_{Q_T} |a(t,x)| \psi\right) \leq \frac{3m}{4} \int_{Q_T} s^{2} \lambda^2 \xi^2 \psi.
\end{equation}
Then, from \eqref{IntegrateTriBis} and \eqref{AbsorbTotal}, for $s \geq s_1(\lambda)$, we get 
\begin{equation}
\label{IntegrateFour}
\displaystyle\frac{m}{4} \int_{Q_T} s^{2} \lambda^2 \xi^2 \psi -\int_{\Sigma_T} 2 s \lambda \xi \frac{\partial \eta^{0}}{\partial n} \psi +\int_{\Sigma_T}\frac{\partial\psi}{\partial n} \leq m \int_{(0,T)\times\omega} s^{2} \lambda^2 \xi^2 \psi.
\end{equation}
\indent \textbf{Step 3: An identity satisfied by $\widetilde{\psi}$.} We readily obtain that
\begin{equation}
\label{eqM2}
\widetilde{M}\widetilde{\psi} = 0,
\end{equation}
where 
\begin{align}
\label{M_2}
\widetilde{M} \widetilde{\psi} &=-s \lambda^2  |\nabla \eta^0|^2 \widetilde{\xi} \widetilde{\psi} +2s \lambda\widetilde{\xi} \nabla \eta^0 . \nabla \widetilde{\psi} + \partial_t \widetilde{\psi}\\
&\ + s^{2} \lambda^2 |\nabla \eta^0|^2 \widetilde{\xi}^2 \widetilde{\psi} + \Delta \widetilde{\psi}+ s \widetilde{\alpha}_t \widetilde{\psi} - a(t,x) \widetilde{\psi}\notag\\
&\ + s \lambda \Delta \eta^0 \widetilde{\xi} \widetilde{\psi}.\notag
\end{align}
\indent We integrate \eqref{eqM} over $(0,T)\times \Omega$
\begin{equation}
\label{IntegratePsiTilde}
\begin{array}{ll}
&\displaystyle\int_{Q_T} s^{2} \lambda^2 |\nabla \eta^0|^2 \widetilde{\xi}^2 \widetilde{\psi} + \int_{Q_T} 2s \lambda \widetilde{\xi} \nabla \eta^0 . \nabla \widetilde{\psi}  +\int_{Q_T}  \partial_t \widetilde{\psi} +\int_{Q_T} \Delta\widetilde{\psi} \\
&= \displaystyle \int_{Q_T} s \lambda^2  |\nabla \eta^0|^2 \widetilde{\xi} \widetilde{\psi} -   \int_{Q_T} s \widetilde{\alpha}_t \widetilde{\psi}+  \int_{Q_T}a(t,x) \widetilde{\psi} \\
& \displaystyle \qquad -\int_{Q_T} s \lambda \Delta \eta^0 \widetilde{\xi} \widetilde{\psi}.
\end{array}
\end{equation}
\indent \textbf{Step 4: Estimates for $\widetilde{\psi}$.}
By using \eqref{defminm}, we have
\begin{align}
\label{SepIntTilde}
&\int_{Q_T} s^{2} \lambda^2 |\nabla \eta^0|^2 \widetilde{\xi}^2 \widetilde{\psi}\\
& \geq\int_{(0,T)\times(\Omega\setminus\omega)} s^{2} \lambda^2 |\nabla \eta^0|^2 \widetilde{\xi}^2 \widetilde{\psi} \geq m \int_{Q_T} s^{2} \lambda^2 \widetilde{\xi}^2 \widetilde{\psi} - m \int_{(0,T)\times\omega} s^{2} \lambda^2  \widetilde{\xi}^2 \widetilde{\psi}. \notag
\end{align}
By combining \eqref{IntegratePsiTilde} and \eqref{SepIntTilde}, we have 
\begin{equation}
\label{IntegrateBisTilde}
\begin{array}{ll}
&\displaystyle m\int_{Q_T} s^{2} \lambda^2 \widetilde{\xi}^2 \widetilde{\psi} + \int_{Q_T} 2s \lambda \widetilde{\xi} \nabla \eta^0 . \nabla \widetilde{\psi}  +\int_{Q_T}  \partial_t \widetilde{\psi} +\int_{Q_T} \Delta \widetilde{\psi} \\
&\leq \displaystyle \int_{Q_T} s \lambda^2  |\nabla \eta^0|^2 \widetilde{\xi} \widetilde{\psi} + \int_{Q_T} s |\widetilde{\alpha}_t| \widetilde{\psi}+  \int_{Q_T}|a(t,x)| \widetilde{\psi} \\
& \displaystyle \qquad +\int_{Q_T} s \lambda |\Delta \eta^0|\widetilde{\xi} \widetilde{\psi} + m \int_{(0,T)\times\omega} s^{2} \lambda^2 \widetilde{\xi}^2 \widetilde{\psi}.
\end{array}
\end{equation}
We have the following integration by parts
\begin{align}
\label{IntByParts1Tilde}
\begin{array}{ll}
\displaystyle\int_{Q_T} 2s \lambda \widetilde{\xi} \nabla \eta^0 . \nabla \widetilde{\psi} =  - \int_{Q_T} 2s \lambda\left(\nabla \widetilde{\xi} . \nabla \eta^{0} \widetilde{\psi}  + \widetilde{\xi} \Delta \eta^{0} \widetilde{\psi}  \right) +\int_{\Sigma_T} 2 s \lambda \widetilde{\xi} \frac{\partial \eta^{0}}{\partial n} \widetilde{\psi},
\end{array}
\end{align}
\begin{equation}
\int_{Q_T}  \partial_t \widetilde{\psi}  =  \int_{\Omega} (\widetilde{\psi}(T,.) - \widetilde{\psi}(0,.) ) = 0,
\label{IntByParts2Tilde}
\end{equation}
\begin{equation}
\int_{Q_T} \Delta \widetilde{\psi} = \int_{\Sigma_T}\frac{\partial\widetilde{\psi}}{\partial n}.
 \label{IntByParts3Tilde}
\end{equation}
\indent From \eqref{IntegrateBisTilde}, \eqref{IntByParts1Tilde}, \eqref{IntByParts2Tilde}, \eqref{IntByParts3Tilde}, we have
\begin{equation}
\label{IntegrateTriTilde}
\begin{array}{ll}
&\displaystyle m \int_{Q_T} s^{2} \lambda^2 \widetilde{\xi}^2 \widetilde{\psi} +\int_{\Sigma_T} 2 s \lambda \widetilde{\xi} \frac{\partial \eta^{0}}{\partial n} \widetilde{\psi}+\int_{\Sigma_T}\frac{\partial\widetilde{\psi}}{\partial n}\\
&\leq \displaystyle \int_{Q_T} s \lambda^2  |\nabla \eta^0|^2 \widetilde{\xi} \widetilde{\psi} + \int_{Q_T} s |\widetilde{\alpha}_t| \widetilde{\psi}+  \int_{Q_T}|a(t,x)| \widetilde{\psi} \\
& \displaystyle \qquad +\int_{Q_T} 3 s \lambda |\Delta \eta^0|\widetilde{\xi} \widetilde{\psi} + \int_{Q_T} 2 s \lambda|\nabla \widetilde{\xi}| | \nabla \eta^{0}| \widetilde{\psi} + m\int_{(0,T)\times\omega} s^{2} \lambda^2 \widetilde{\xi}^2 \widetilde{\psi}.
\end{array}
\end{equation}
\indent By using the first two lines of \eqref{UsefulEstimCarl} and the fact that $\lambda \geq 1$, we have 
\begin{align}
\begin{array}{ll}
\label{EstimUsefulBisTilde}
&\displaystyle\int_{Q_T} s \lambda^2  |\nabla \eta^0|^2 \widetilde{\xi} \widetilde{\psi} + \int_{Q_T} s |\widetilde{\alpha}_t| \widetilde{\psi}+  \int_{Q_T}|a(t,x)| \widetilde{\psi}\\
& \displaystyle+\int_{Q_T} 3 s \lambda |\Delta \eta^0|\widetilde{\xi} \widetilde{\psi} + \int_{Q_T} 2s \lambda|\nabla \widetilde{\xi}| | \nabla \eta^{0}| \widetilde{\psi} 
\\
& \displaystyle\quad\leq C \left( \int_{Q_T} s \lambda^2  \widetilde{\xi} \widetilde{\psi} + \int_{Q_T} s e^{4\lambda \norme{\eta^0}_{\infty}} T  \widetilde{\xi}^{2} \widetilde{\psi} +  \int_{Q_T} |a(t,x)| \widetilde{\psi} +\int_{Q_T} s \lambda \widetilde{\xi} \widetilde{\psi}  \right)\\
& \displaystyle\quad\leq C \left( \int_{Q_T} s \lambda^2  \widetilde{\xi} \widetilde{\psi} + \int_{Q_T} s e^{4\lambda \norme{\eta^0}_{\infty}} T  \widetilde{\xi}^{2} \widetilde{\psi} +  \int_{Q_T} |a(t,x)| \widetilde{\psi} \right)
\end{array}
\end{align}
By combining \eqref{IntegrateTriTilde} and \eqref{EstimUsefulBisTilde}, we get
\begin{equation}
\label{IntegrateTriBisTilde}
\begin{array}{ll}
&\displaystyle m \int_{Q_T} s^{2} \lambda^2 \widetilde{\xi}^2 \widetilde{\psi} +\int_{\Sigma_T} 2 s \lambda \widetilde{\xi} \frac{\partial \eta^{0}}{\partial n} \widetilde{\psi}+\int_{\Sigma_T}\frac{\partial\widetilde{\psi}}{\partial n}\\
&\displaystyle\quad\leq C \left( \int_{Q_T} s \lambda^2  \widetilde{\xi} \widetilde{\psi} + \int_{Q_T} s e^{4\lambda \norme{\eta^0}_{\infty}} T  \widetilde{\xi}^{2} \widetilde{\psi} +  \int_{Q_T} |a(t,x)| \widetilde{\psi} \right)
\end{array}
\end{equation}
\indent \textbf{Absorption.} Note that we will use the third line of \eqref{UsefulEstimCarl} in the next four estimates. \\
\indent  By taking $s \geq e^{\lambda \norme{\eta^0}_{\infty}}(T/2)^{2} (4C /m)$, we have $C s \widetilde{\xi} \leq (m/4) (s \widetilde{\xi})^{2}$ and consequently
\begin{equation}
\label{Absorb1Tilde}
C \int_{Q_T} s \lambda^2  \widetilde{\xi} \widetilde{\psi} \leq \frac{m}{4} \int_{Q_T} s^{2} \lambda^2 \widetilde{\xi}^2 \widetilde{\psi}.
\end{equation}
By taking $s \geq T e^{4\lambda \norme{\eta^0}_{\infty}} (4C /m)$, we have $C s e^{2\lambda \norme{\eta^0}_{\infty}} T  \widetilde{\xi}^{2} \leq (m/4)(\lambda s \widetilde{\xi})^{2}$ and consequently
\begin{equation}
\label{Absorb2Tilde}
C \int_{Q_T} s e^{2\lambda \norme{\eta^0}_{\infty}} T  \widetilde{\xi}^{2} \widetilde{\psi} \leq \frac{m}{4} \int_{Q_T} s^{2} \lambda^2 \widetilde{\xi}^2 \widetilde{\psi}.
\end{equation}
By taking $s \geq e^{\lambda \norme{\eta^0}_{\infty}} (T/2)^{2} \norme{a}_{L^{\infty}(Q_T)}^{1/2} (4C /m)^{1/2}$, we have \\$C \norme{a}_{L^{\infty}(Q_T)} \leq (m/4)(\lambda s \widetilde{\xi})^{2}$ and consequently
\begin{equation}
\label{Absorb3Tilde}
C \int_{Q_T} |a(t,x)| \widetilde{\psi} \leq \frac{m}{4} \int_{Q_T} s^{2} \lambda^2 \widetilde{\xi}^2 \widetilde{\psi}.
\end{equation}
Therefore, by taking $s \geq s_1(\lambda)$ as defined in \eqref{deflambd1s1}, we have from \eqref{Absorb1}, \eqref{Absorb2} and \eqref{Absorb3Tilde} that 
\begin{equation}
\label{AbsorbTotalTilde}
C \left( \int_{Q_T} s \lambda^2  \widetilde{\xi} \widetilde{\psi} + \int_{Q_T} s e^{4\lambda \norme{\eta^0}_{\infty}} T  \widetilde{\xi}^{2} \widetilde{\psi} +  \int_{Q_T} |a(t,x)| \widetilde{\psi}  \right)\leq \frac{3m}{4} \int_{Q_T} s^{2} \lambda^2 \widetilde{\xi}^2 \widetilde{\psi}.
\end{equation}
Then, from \eqref{IntegrateTriBisTilde} and \eqref{AbsorbTotalTilde}, for $s \geq s_1(\lambda)$, we get 
\begin{equation}
\label{IntegrateFourPsitilde}
\displaystyle\frac{m}{4} \int_{Q_T} s^{2} \lambda^2 \widetilde{\xi}^2 \widetilde{\psi} +\int_{\Sigma_T} 2 s \lambda \widetilde{\xi} \frac{\partial \eta^{0}}{\partial n} \widetilde{\psi} +\int_{\Sigma_T}\frac{\partial\widetilde{\psi}}{\partial n} \leq m \int_{(0,T)\times\omega} s^{2} \lambda^2 \widetilde{\xi}^2 \widetilde{\psi}.
\end{equation}
\indent \textbf{Step 5: Elimination of the boundary terms.} From now, we take $s \geq s_1(\lambda)$. By summing \eqref{IntegrateFour} and \eqref{IntegrateFourPsitilde}, we get 
\begin{align}
\label{carlM1+M2}
\begin{array}{ll}
&\displaystyle\frac{m}{4}\int_{Q_T} s^{2} \lambda^2 \xi^2 \psi -\int_{\Sigma_T} 2 s \lambda \xi \frac{\partial \eta^{0}}{\partial n} \psi +\int_{\Sigma_T}\frac{\partial\psi}{\partial n} \\
&\displaystyle+ \frac{m}{4}\int_{Q_T} s^{2} \lambda^2 \widetilde{\xi}^2 \widetilde{\psi} +\int_{\Sigma_T} 2 s \lambda \widetilde{\xi} \frac{\partial \eta^{0}}{\partial n} \widetilde{\psi} +\int_{\Sigma_T}\frac{\partial\widetilde{\psi}}{\partial n}\\
& \displaystyle \ \leq m \left(\int_{(0,T)\times\omega} s^{2} \lambda^2 \xi^2 \psi+\int_{(0,T)\times\omega} s^{2} \lambda^2 \widetilde{\xi}^2 \widetilde{\psi}\right).
\end{array}
\end{align}
Since $\eta^0 = 0$ on $\partial \Omega$, we have
$$ \xi = \widetilde{\xi},\ \alpha = \widetilde{\alpha}\ \text{and}\ \psi = \widetilde{\psi}\qquad \text{on}\ \Sigma_T,$$
which leads to
\begin{equation}
\label{DeuxTermesNul}
 -\int_{\Sigma_T} 2 s \lambda \xi \frac{\partial \eta^{0}}{\partial n} \psi   +\int_{\Sigma_T} 2 s \lambda \widetilde{\xi}\frac{\partial \eta^{0}}{\partial n} \widetilde{\psi}  = 0.
\end{equation}
Moreover, we have 
$$ \partial_i \psi = e^{-s \alpha}(\partial_i q + s \lambda \partial_i \eta^0 \xi q),\ \partial_i \widetilde{\psi} = e^{-s \widetilde{\alpha}} ( \partial_i q - s \lambda \partial_i \eta^0 \widetilde{\xi} q),$$
whence by using $\frac{\partial q}{\partial n} = 0$ on $\Sigma_T$, we get
$$ \frac{\partial \psi}{\partial n} = s \lambda \frac{\partial \eta^{0}}{\partial n}  \xi e^{-s\alpha} q,\  \frac{\partial \widetilde{\psi}}{\partial n} = - s \lambda \frac{\partial \eta^{0}}{\partial n}  \widetilde{\xi} e^{-s\widetilde{\alpha}} q\qquad \textrm{on}\ \Sigma_T.$$
This leads to 
\begin{equation}
\label{Deuxautrestermesnuls}
\int_{\Sigma_T}\frac{\partial\psi}{\partial n}  + \int_{\Sigma_T}\frac{\partial\widetilde{\psi}}{\partial n} = 0.
\end{equation}
We get from \eqref{carlM1+M2}, \eqref{DeuxTermesNul} and \eqref{Deuxautrestermesnuls} \begin{align}
\label{carlM1+M2Bis}
\begin{array}{ll}
&\displaystyle\frac{m}{4}\left(\int_{Q_T} s^{2} \lambda^2 \xi^2 \psi + \int_{Q_T} s^{2} \lambda^2 \widetilde{\xi}^2 \widetilde{\psi}\right)\\
& \displaystyle \qquad\leq C \left(\int_{(0,T)\times\omega} s^{2} \lambda^2 \xi^2 \psi+\int_{(0,T)\times\omega} s^{2} \lambda^2 \widetilde{\xi}^2 \widetilde{\psi}\right).
\end{array}
\end{align}
By using the fact that $\widetilde{\xi} \leq \xi$, $e^{-s \widetilde{\alpha}} \leq e^{-s \alpha}$ in $Q_T$, we get from \eqref{carlM1+M2Bis} the Carleman estimate \eqref{carl}. This concludes the proof of \Cref{CarlL1}.
\end{proof}

\subsection{Proof of the $L^2$-$L^1$ observability inequality: \Cref{TheoObsL^2L^1}}
\label{SectionL^2L^1}
The goal of this subsection is to prove \Cref{TheoObsL^2L^1}, which is a consequence of \Cref{CarlL1}, $L^p$-$L^q$ estimates and the dissipativity in time of the $L^p$ norm of \eqref{eqlinaAdj}.
\begin{proof} \textbf{Step 1: $L^1$-$L^1$ observability inequality.} We fix $\lambda = 1$ and $s=s_1$ in \Cref{CarlL1} to get
\begin{equation}
\int_{Q_T} t^{-2}(T-t)^{-2} e^{-s\alpha} q dx dt\leq C_1(\Omega,\omega) \int_{(0,T)\times\omega} t^{-2}(T-t)^{-2} e^{-s\alpha}  q  dxdt.
\label{carlito}
\end{equation}
\indent First, we observe that in $(T/4,3T/4)\times\Omega$,
\begin{align}
\label{inT/43T/4}
\begin{array}{ll}
\displaystyle t^{-2}(T-t)^{-2} e^{-s\alpha}   & \displaystyle\geq  \frac{C}{T^4}\exp\left(-\frac{C(\Omega,\omega)\left(T+T^{2}+T^{2}\norme{a}_{L^{\infty}(Q_T)}^{1/2}\right)}{T^{2}}\right)\\
&\displaystyle \geq \frac{C}{T^4} e^{-C(\Omega,\omega)\left(1+\frac{1}{T}+\norme{a}_{L^{\infty}(Q_T)}^{1/2}\right)}.
\end{array}
\end{align}
Secondly, from the fact that $x^{2} e^{-Mx} \leq C/M^{2}$ for every $x, M \geq 0$ used with $x = t^{-1}(T-t)^{-1}$ and $M=C(\Omega,\omega)\left(T+T^{2}+T^{2}\norme{a}_{L^{\infty}(Q_T)}^{1/2}\right)$, we remark that in $(0,T)\times\omega$,
\begin{align}
\label{inomega}
\begin{array}{ll}
& \displaystyle t^{-2}(T-t)^{-2} e^{-s\alpha}  \\
&\displaystyle \leq t^{-2}(T-t)^{-2}\exp\left(-C(\Omega,\omega)\left(T+T^{2}+T^{2}\norme{a}_{L^{\infty}(Q_T)}^{1/2}\right)t^{-1}(T-t)^{-1}\right)\\
&\displaystyle \leq \frac{C}{\left(C(\Omega,\omega)\left(T+T^{2}+T^{2}\norme{a}_{L^{\infty}(Q_T)}^{1/2}\right)\right)^{2}}\\
&\displaystyle \leq  \frac{C(\Omega,\omega)}{T^{4}}.
\end{array}
\end{align}
Then, we get from \eqref{carlito}, \eqref{inT/43T/4} and \eqref{inomega}
\begin{equation}
\label{InobsL2L1Proof}
\int_{(T/4,3T/4)\times\Omega} q dx dt\leq e^{C(\Omega,\omega)\left(1+\frac{1}{T}+\norme{a}_{L^{\infty}(Q_T)}^{1/2}\right)} \int_{(0,T)\times\omega} q  dxdt.
\end{equation}
On the other hand, we obtain by the dissipativity in time of the $L^1$-norm (see \Cref{DissipProp} with $p=1$)
\begin{equation}
\label{Dissip}
\norme{q(T/4,.)}_{L^1(\Omega)} \leq \frac{2C\exp\left(CT \norme{a}_{L^{\infty}(Q_T)}\right)}{T} \int_{T/4}^{3T/4} \norme{q(t,.)}_{L^1(\Omega)} dt.
\end{equation}
By using \eqref{InobsL2L1Proof} and \eqref{Dissip}, we get 
\begin{equation}
\label{Firstestimate}
\norme{q(T/4,.)}_{L^1(\Omega)} \leq C(\Omega,\omega,T,a) \int_{(0,T)\times\omega} q  dxdt,
\end{equation}
where $C(\Omega,\omega,T,a)$ is defined in \eqref{coutcontrolereg}.\\
\indent From now, we denote by $C(\Omega,\omega,T,a)$ various positive constants varying from line to line which are of the form \eqref{coutcontrolereg}.\\
\indent \textbf{Step 2: Global $L^2$-$L^1$ estimate.} The goal of this step is to prove that
\begin{equation}
\label{L^2L^1Esti}
\norme{q(0,.)}_{L^2(\Omega)} \leq C(\Omega,\omega,T,a) \norme{q(T/4,.)}_{L^1(\Omega)}.
\end{equation}
To simplify the notations, we set $\widehat{q}(t) := q(T-t)$ for $t \in [0,T]$. Then, \eqref{L^2L^1Esti} rewrites as follows
\begin{equation}
\label{L^2L^1Estibis}
\norme{\widehat{q}(\widehat{T_2},.)}_{L^2(\Omega)} \leq C(\Omega,\omega,T,a) \norme{\widehat{q}(\widehat{T_1},.)}_{L^1(\Omega)}.
\end{equation}
with $\widehat{T_2} := T > \widehat{T_1} := 3T/4$.\\
\indent We introduce the following sequence
\begin{equation}
r_{0} := 1,\qquad \forall k \geq 0,\   r_{k+1} :=
\left\{
\begin{array}{c l}
\frac{N r_k}{N-r_k} & \mathrm{if}\  r_{k} < N,\\
2 r_k & \mathrm{if}\  r_{k} \geq N.
\end{array}
\right.
\label{defsequencep}
\end{equation}
We readily have from the definition \eqref{defsequencep} that
\begin{equation}
\label{CondIntegrability}
\forall k \geq 0,\ \beta_k:=\frac{N}{2}\left(\frac{1}{r_k}-\frac{1}{r_{k+1}}\right) \leq \frac{1}{2} < 1,
\end{equation}
and
\begin{equation}
\label{IntegerL}
\exists l \geq 1,\ r_l \geq 2.
\end{equation}
We also introduce a sequence of times 
\begin{equation}
\label{seqtimes}
\forall k \in \{0, \dots, l\},\ \tau_k := \widehat{T_1} + \frac{k}{l}(\widehat{T_2} - \widehat{T_1}).
\end{equation}
Let us remark that
\begin{equation}
\label{seqtimesBis}
\forall k \in \{0, \dots, l\},\ \tau_{k+1} - \tau_k = \frac{\widehat{T_2} - \widehat{T_1}}{l} = \frac{T}{2l}.
\end{equation}
By induction, we will show that
\begin{equation}
\label{induction}
\forall k \in \{0, \dots, l\},\ \norme{\widehat{q}(\tau_k,.)}_{L^{r_k}(\Omega)} \leq C(\Omega,\omega,T,a) \norme{\widehat{q}(\tau_0,.)}_{L^1(\Omega)}.
\end{equation}
The case $k=0$ is obvious (take $C_0=1$). Then, by denoting by $S(t) = e^{t\Delta}$ the heat-semigroup with Neumann boundary conditions, we have  for every $k \geq 0$,
\begin{equation}
\label{SemigroupChal}
\widehat{q}(\tau_{k+1}) = S(\tau_{k+1}-\tau_{k}) \widehat{q}(\tau_k) + \int_{\tau_k}^{\tau_{k+1}} S(\tau_{k+1}-s) (-a(s,.) \widehat{q}(s)) ds,
\end{equation}
from the equation satisfied by $\widehat{q}$ (see \eqref{eqlinaAdj}).\\
\indent We assume that \eqref{induction} holds for $k \in \{0, \dots, l\}$. From \eqref{SemigroupChal}, \eqref{CondIntegrability} and the regularizing effect $L^{r_k}$-$L^{r_{k+1}}$ of the heat-semigroup (see \Cref{L^pL^q}), we have
\begin{align}
\label{Unemajchiante}
\begin{array}{ll}
\displaystyle\norme{\widehat{q}(\tau_{k+1})}_{L^{r_{k+1}}(\Omega)}&\displaystyle\leq (\tau_{k+1}-\tau_{k})^{-\beta_k} \norme{\widehat{q}(\tau_{k})}_{L^{r_k}(\Omega)}\\
&\displaystyle\quad + \int_{\tau_{k}}^{\tau_{k+1}} (\tau_{k+1}-s)^{-\beta_k} \norme{a}_{L^{\infty}(Q_T)} \norme{\widehat{q}(s)}_{L^{r_k}(\Omega)} ds\\
& \leq A_{1,k} + A_{2,k},
\end{array}
\end{align}
where 
\begin{equation}
\label{MajA_{1,k}}
A_{1,k} :=(\tau_{k+1}-\tau_{k})^{-\beta_k} \norme{\widehat{q}(\tau_{k})}_{L^{r_k}(\Omega)},
\end{equation}
and 
\begin{equation}
\label{MajA_{2,k}}
A_{2,k} := \int_{\tau_{k}}^{\tau_{k+1}} (\tau_{k+1}-s)^{-\beta_k} \norme{a}_{L^{\infty}(Q_T)} \norme{\widehat{q}(s)}_{L^{r_k}(\Omega)} ds.
\end{equation}
From \eqref{MajA_{1,k}}, \eqref{seqtimesBis}, \eqref{CondIntegrability} and \eqref{induction}, we have
\begin{equation}
\label{MajA_{1,k}Bis}
A_{1,k} \leq C T^{-\beta_k} C(\Omega,\omega,T,a) \norme{\widehat{q}(\tau_0,.)}_{L^1(\Omega)} \leq C(\Omega,\omega,T,a) \norme{\widehat{q}(\tau_0,.)}_{L^1(\Omega)} .
\end{equation}
From \eqref{MajA_{2,k}}, the dissipativity in time of the $L^{r_k}$-norm (see \Cref{DissipProp}), the induction assumption \eqref{induction}, \eqref{CondIntegrability} and \eqref{seqtimesBis}, we have
\begin{equation}
\label{MajA_{2,k}Bis}
\begin{array}{ll}
A_{2,k}& \displaystyle\leq  \norme{a}_{\infty}\int_{\tau_{k}}^{\tau_{k+1}} (\tau_{k+1}-s)^{-\beta_k} C e^{CT \norme{a}_{\infty}} \norme{\widehat{q}(\tau_{k})}_{L^{r_k}(\Omega)} ds\\
& \displaystyle \leq  C \norme{a}_{\infty} e^{CT \norme{a}_{\infty}} C(\Omega,\omega,T,a) \norme{\widehat{q}(\tau_0,.)}_{L^1(\Omega)} (\tau_{k+1}-\tau_k)^{-\beta_k+1}\\
& \displaystyle \leq C(\Omega,\omega,T,a)  \norme{a}_{\infty} T^{-\beta_k+1}  \norme{\widehat{q}(\tau_0,.)}_{L^1(\Omega)} \\
&\displaystyle \leq C(\Omega,\omega,T,a)  \norme{a}_{\infty} (T+1)  \norme{\widehat{q}(\tau_0,.)}_{L^1(\Omega)} \\
&\displaystyle \leq C(\Omega,\omega,T,a) \left(e^{T \norme{a}_{\infty}}+2e^{\norme{a}_{\infty}^{1/2}}\right)\norme{\widehat{q}(\tau_0,.)}_{L^1(\Omega)}\\
& \leq C(\Omega,\omega,T,a) \norme{\widehat{q}(\tau_0,.)}_{L^1(\Omega)} .
\end{array}
\end{equation}
The estimates \eqref{Unemajchiante}, \eqref{MajA_{1,k}Bis} and \eqref{MajA_{2,k}Bis} prove \eqref{induction} for $(k+1)$ and concludes the induction. Thus, \eqref{induction} holds for $k=l$, which combined with \eqref{IntegerL} and \eqref{seqtimes}, yields \eqref{L^2L^1Estibis}.\\
\color{black}
\indent \textbf{Step 3:} By using \eqref{Firstestimate} and \eqref{L^2L^1Esti}, we prove \eqref{obsL^2L^1} and consequently \Cref{TheoObsL^2L^1}.
\end{proof}

\subsection{Proof of the linear global nonnegative-controllability: \Cref{TheoControlInfty}}
\label{HumMethod}
The goal of this section is to prove \Cref{TheoControlInfty}. The following proof is inspired by the so-called \textit{Hilbert Uniqueness method} due to Jacques-Louis Lions (see \cite{LJL} and more precisely \cite[Section 2.1]{Zu-Fi}).
\begin{proof}
The proof is divided into two steps. First, we build a sequence of controls\\ $h_{\varepsilon} \in L^{\infty}((0,T)\times\omega)$ with $\varepsilon > 0$ which provide the approximate nonnegative-controllability of \eqref{eqlin}. Secondly, we pass to the limit when $\varepsilon$ tends to $0$.\\
\indent \textbf{Step 1.} Let us fix $T > 0$, $a \in L^{\infty}(Q_T)$ and $y_0 \in L^2(\Omega)$. For any $\varepsilon \in(0,1)$, we consider the following functional: for every $q_T \in L^2(\Omega;\R^{+})$,
\begin{equation}
\label{Functional}
J_{\varepsilon}(q_T) = \frac{1}{2} \left(\int_{(0,T)\times\omega} q dx dt\right)^2 + \varepsilon \norme{q_T}_{L^2(\Omega)} + \int_{\Omega} q(0,x) y_0(x) dx,
\end{equation}
where $q$ is the solution to \eqref{eqlinaAdj}.\\
\indent The functional $J_{\varepsilon}$ is continuous, convex and coercive on the unbounded closed convex set $L^2(\Omega;\R^{+})$. More precisely, we will show that
\begin{equation}
\label{Coercivity}
\liminf_{\norme{q_T}_{L^2(\Omega)} \rightarrow + \infty} \frac{J_{\varepsilon}(q_T)}{\norme{q_T}_{L^2(\Omega)}} \geq \varepsilon.
\end{equation}
Indeed, given a sequence $(q_{T,k})_{k \geq 0} \in L^2(\Omega)$ with $\norme{q_{T,k}}_{L^2(\Omega)} \rightarrow + \infty$, we normalize it:
\begin{equation*}
\widetilde{q}_{T,k} := \frac{q_{T,k}}{\norme{q_{T,k}}_{L^2(\Omega)}},
\end{equation*}
and we denote by $\widetilde{q}_k$ the solution to \eqref{eqlinaAdj} associated to the initial data $\widetilde{q}_{T,k}$.
We have
\begin{equation}
\frac{J_{\varepsilon}(q_{T,k})}{\norme{q_{T,k}}_{L^2(\Omega)}} = \frac{\norme{q_{T,k}}_{L^2(\Omega)}}{2} \left(\int_{(0,T)\times\omega} \widetilde{q}_k dx dt\right)^2 + \varepsilon + \int_{\Omega} \widetilde{q}_k(0,x) y_0(x) dx.
\label{Jdivide}
\end{equation}
We distinguish the following two cases.\\
\indent \textbf{Case 1:}
\begin{equation}
\label{case1}
\liminf_{k \rightarrow + \infty} \int_{(0,T)\times\omega} \widetilde{q}_kdx dt > 0.
\end{equation}
When \eqref{case1} holds, we clearly have
\begin{equation*}
\liminf_{k \rightarrow + \infty}
\frac{J_{\varepsilon}(q_{T,k})}{\norme{q_{T,k}}_{L^2(\Omega)}} = +\infty \geq \varepsilon
\end{equation*}
\indent \textbf{Case 2:}
\begin{equation}
\liminf_{k \rightarrow + \infty} \int_{(0,T)\times\omega}\widetilde{q}_kdx dt = 0.
\label{linfinfNul}
\end{equation}
In this case, by using the estimate \eqref{estl2faible} of \Cref{wpl2}, the embedding \eqref{injclassique} and \eqref{linfinfNul}, extracting subsequences (that we denote by the index $k$ to simplify the notation), we deduce that there exists $\widetilde{q} \in W_T$ such that
\begin{equation}
\label{convfaible}
\widetilde{q}_k \rightharpoonup \widetilde{q}\ \textrm{in}\ W_T,
\end{equation}
\begin{equation}
\label{convfaiblePont}
\widetilde{q}_k(0,.) \rightharpoonup \widetilde{q}(0,.)\ \textrm{in}\ L^2(\Omega),
\end{equation}
\begin{equation}
\label{convfortesur}
\int_{(0,T)\times\omega} \widetilde{q}_k dx dt \rightarrow 0.
\end{equation}
By using Aubin Lions' lemma (see \cite[Section 8, Corollary 4]{S}) and \eqref{convfaible}, $(\widetilde{q}_k)_{k \in \N}$ is relatively compact in $L^2(Q_T)$, then up to a subsequence we have 
\begin{equation}
\label{convL^2}
\widetilde{q}_k \rightarrow \widetilde{q}\ \textrm{in}\ L^2(Q_T;\R^{+}).
\end{equation}
In view of \eqref{convfortesur} and \eqref{convL^2}, we have
\begin{equation}
\widetilde{q} = 0 \ \textrm{in}\ (0,T)\times \omega.
\label{Nulomega}
\end{equation}
Then, by using \eqref{Nulomega} and the observability inequality \eqref{obsL^2L^1}, we have 
\begin{equation}
\widetilde{q}(0,.)=0.
\label{NulCondFin}
\end{equation}
Consequently, by combining \eqref{convfaiblePont} and \eqref{NulCondFin}, we have 
$$ \int_{\Omega} \widetilde{q}_k(0,x) y_0(x) dx \rightarrow 0,$$
which yields \eqref{Coercivity} thanks to \eqref{Jdivide}.\\
\indent We deduce that $J_{\varepsilon}$ admits a minimum $q_{\varepsilon,T} \in L^2(\Omega;\R^{+})$. We take 
\begin{equation}
\label{PoseControl}
h_{\varepsilon} := \left(\int_{(0,T)\times\omega} q_{\varepsilon}\right) 1_{\omega},
\end{equation}
and we denote by $y_{\varepsilon} \in W_T \cap L^{\infty}(Q_T)$ the solution to 
\begin{equation}
\left\{
\begin{array}{l l}
\partial_t y_{\varepsilon} - \Delta y_{\varepsilon} + a(t,x) y_{\varepsilon} = h_{\varepsilon} 1_{\omega} &\mathrm{in}\ (0,T)\times\Omega,\\
\frac{\partial y_{\varepsilon}}{\partial n}= 0&\mathrm{on}\ (0,T)\times\partial\Omega,\\
y_{\varepsilon}(0,.)=y_0& \mathrm{in}\ \Omega.
\end{array}
\right.
\label{eqlinEps}
\end{equation}
We use the fact that $J_{\varepsilon}(q_{T,\varepsilon}) \leq J_{\varepsilon}(0) = 0$ to get
\begin{equation}
\label{EstimCOntrol}
\frac{1}{2}\left(\int_{(0,T)\times\omega} q_{\varepsilon}\right)^2 +  \varepsilon \norme{q_{\varepsilon,T}}_{L^2(\Omega)} \leq -\int_{\Omega} q_{\varepsilon}(0,x) y_0(x) dx.
\end{equation}
By using the observability inequality \eqref{obsL^2L^1}, \eqref{PoseControl}, \eqref{EstimCOntrol} and Young's inequality, we obtain the following bound on the sequence of controls
\begin{equation}
\label{EstimControl}
\norme{h_{\varepsilon}}_{L^{\infty}(Q_T)}^2 \leq C(\Omega,\omega,T,a) \norme{y_0}_{L^2(\Omega)}^2,
\end{equation}
where $C(\Omega,\omega,T,a)$ is of the form \eqref{coutcontrolereg}.\\
\indent For $\lambda > 0$ and $p_{T} \in L^2(\Omega;\R^{+})$, we have
\begin{equation}
J_{\varepsilon}(q_{\varepsilon,T}) \leq J_{\varepsilon}(q_{\varepsilon,T}+\lambda p_{T}) 
\label{CondMin} .
\end{equation}
Dividing the inequality \eqref{CondMin} by $\lambda$ and letting $\lambda \rightarrow 0^{+}$, we easily obtain from \eqref{PoseControl},
\begin{align}
\label{Ineul}
- (y_0, p(0,.))_{L^2(\Omega)}
& \leq \int_{(0,T)\times\omega} h_{\varepsilon} p  + \varepsilon \liminf_{\lambda \rightarrow 0^{+}} \frac{\norme{q_{\varepsilon,T} + \lambda p_T}_{L^2(\Omega)} - \norme{q_{\varepsilon,T}}_{L^2(\Omega)}}{\lambda}\\
& \leq \int_{(0,T)\times\omega} h_{\varepsilon} p  + \varepsilon \norme{p_T}_{L^2(\Omega)}, \notag
\end{align} 
where $p$ is the solution to \eqref{eqlinaAdj} with initial data $p_T$.
Since systems \eqref{eqlin} and \eqref{eqlinaAdj} are in duality, we have
\begin{equation}
\label{duality}
\int_{(0,T)\times\omega} h_{\varepsilon} p = (y_{\varepsilon}(T,.), p_T)_{L^2(\Omega)} - (y_0, p(0,.))_{L^2(\Omega)},
\end{equation}
which, combined with \eqref{Ineul}, yields
\begin{equation}
\label{inpos}
(y_{\varepsilon}(T,.), p_T)_{L^2(\Omega)} \geq - \varepsilon \norme{p_T}_{L^2(\Omega)},\ \forall p_T \in L^2(\Omega;\R^{+}).
\end{equation}
\indent \textbf{Step 2.} By using \eqref{EstimControl}, \eqref{eqlinEps}, \Cref{wpl2}, \Cref{wplinfty} and the embedding \eqref{injclassique}, up to a subsequence, we get that there exist $h \in L^{\infty}(Q_T)$ and $y \in W_T \cap L^{\infty}(Q_T)$ such that 
\begin{equation}
\label{ConvHeps}
h_{\varepsilon} {\rightharpoonup}^*\  h\ \text{in}\ L^{\infty}(Q_T)\ \text{as}\ \varepsilon \rightarrow 0,
\end{equation}
\begin{equation}
\label{ConvYeps}
y_{\varepsilon} {\rightharpoonup}\ y\ \text{in}\ W_T\ \Rightarrow y_{\varepsilon}(0,.){\rightharpoonup}\ y(0,.),\ y_{\varepsilon}(T,.){\rightharpoonup}\ y(T,.)\ \text{in}\ L^2(\Omega)\qquad \text{as}\ \varepsilon \rightarrow 0.
\end{equation}
Then, by using \eqref{eqlinEps}, \eqref{ConvHeps} and \eqref{ConvYeps}, we obtain that $y$ is the solution of \eqref{eqlin} associated to the control $h$ satisfying \eqref{EstiControlinfty} (by letting $\varepsilon$ goes to $0$ in \eqref{EstimControl}) and
\begin{equation}
\label{inposBis}
(y(T,.), p_T)_{L^2(\Omega)} \geq 0,\ \forall p_T \in L^2(\Omega;\R^{+}).
\end{equation}
Then, we deduce from \eqref{inposBis} that $y$ satisfies \eqref{yTNeqLin}, which concludes the proof of \Cref{TheoControlInfty}.
\end{proof}

\section{A fixed-point argument to prove the small-time nonlinear global nonnegative controllability}
\label{Fixedpointsection}
The goal of this section is to prove \Cref{TheoNeg}. We assume that \eqref{finfty} holds for $\alpha \leq 2$ and $f(s) \geq 0$ for $s \geq 0$.
\subsection{A comparison principle}
First, we begin with this lemma, which is a consequence of the comparison principle for subsolutions and supersolutions of \eqref{heatSL} with control $h=0$ stated in \Cref{SubSuper}.
\begin{lem}
\label{LemComp}
Let $T>0$, $y_0 \in L^{\infty}(\Omega)$. Assume that there exists $T^{*} \in (0,T]$ and a control $h^{*} \in L^{\infty}(Q_{T^{*}})$ such that the solution $y \in L^{\infty}(Q_{T^{*}})$ to \eqref{heatSL} satisfies \eqref{yTNeg} (replacing $T \leftarrow T^{*}$). Then, if we set
$$ h(t,. ) := 
\left\{
\begin{array}{ll}

h^{*}(t,.)& \text{for}\ t \in (0,T^{*}),\\ 
0& \text{for}\ t \in (T^{*},T),

\end{array}
\right.
$$
the solution $y$ of \eqref{heatSL} belongs to $L^{\infty}(Q_T)$ and satisfies \eqref{yTNeg}. Moreover, there exists $C :=C(\Omega)>0$ such that
\begin{equation}
\label{estimWholeinterval}
\norme{y}_{L^{\infty}(Q_T)} \leq C \norme{y}_{L^{\infty}(Q_{T^*})}.
\end{equation}
\end{lem}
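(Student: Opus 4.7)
The plan is to split the interval $[0,T]$ into $[0,T^*]$ (where the prescribed control $h^*$ already yields a solution $y \in L^\infty(Q_{T^*})$ with $y(T^*,\cdot) \geq 0$) and $[T^*,T]$, and on the second piece to control the free evolution by a sandwich argument based on the comparison principle in \Cref{SubSuper}.

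On $[0,T^*]$ there is nothing to do: the solution is provided by hypothesis and satisfies the pointwise bound $\|y\|_{L^\infty(Q_{T^*})} < \infty$ together with $y(T^*,\cdot) \geq 0$. For the second piece, I would solve \eqref{heatSL} on $(T^*,T) \times \Omega$ with $h \equiv 0$ and initial datum $y(T^*,\cdot)$, and glue the two pieces together; uniqueness (which holds since $f \in C^1$, see the comment after \Cref{defpropsolNL}) shows that this concatenation is the unique solution of \eqref{heatSL} on $Q_T$ corresponding to the extended control $h$ defined in the statement.

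The core step is the sandwich on $[T^*,T]$. I set $M := \|y(T^*,\cdot)\|_{L^\infty(\Omega)} \leq \|y\|_{L^\infty(Q_{T^*})}$ and take $\underline{y} \equiv 0$, $\overline{y} \equiv M$ as candidates. Since $f(0)=0$ by \eqref{fnUl}, the constant $0$ is a (sub)solution; since $M \geq 0$ and $f(s) \geq 0$ for $s \geq 0$ by assumption, one has $\partial_t M - \Delta M + f(M) = f(M) \geq 0$, so $M$ is a supersolution. Moreover $0 \leq y(T^*,\cdot) \leq M$ in $\Omega$ and $\underline{y} \leq \overline{y}$. Applying \Cref{SubSuper} on $(T^*,T) \times \Omega$ gives
\begin{equation*}
0 \leq y(t,x) \leq M \qquad \text{for a.e.\ } (t,x) \in (T^*,T) \times \Omega.
\end{equation*}
In particular $y(T,\cdot) \geq 0$, which is \eqref{yTNeg}, and $\|y\|_{L^\infty((T^*,T)\times\Omega)} \leq \|y\|_{L^\infty(Q_{T^*})}$. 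Combining with the bound on $[0,T^*]$ yields \eqref{estimWholeinterval} for some $C = C(\Omega) > 0$ (in fact $C$ can be taken to be $1$, but an absolute constant is all that is needed downstream).

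The step I expect to require the most care is merely the verification that $0$ and $M$ are legitimate sub/supersolutions in the sense of \Cref{SubSuper}: one must check that constants satisfy the variational inequality \eqref{formvarNL1} with $\leq$ or $\geq$, which is immediate because $\partial_t$ and $\nabla$ of a constant vanish and because $f$ has the correct sign on $[0,M]$. No compactness or regularization is needed beyond that already embedded in \Cref{SubSuper}.
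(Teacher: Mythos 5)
Your proof is correct and follows essentially the same strategy as the paper: concatenate the controlled solution on $[0,T^{*}]$ with the free evolution on $(T^{*},T)$, then sandwich the latter via \Cref{SubSuper} using the subsolution $0$ (which relies on $f(0)=0$) and the sign condition $f\geq 0$ on $\R^{+}$. The only difference is the choice of supersolution: the paper takes the free heat evolution $\widetilde{y}$ of $y(T^{*},\cdot)$ and then invokes the $L^{\infty}$ estimate of \Cref{wplinfty}, whereas your constant $M=\norme{y(T^{*},\cdot)}_{L^{\infty}(\Omega)}$ is even a bit simpler and yields \eqref{estimWholeinterval} with $C=1$.
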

\begin{proof}
By using the fact that $f(0)=0$, $f(s) \geq 0$ for $s \geq 0$ and the comparison principle (see \Cref{SubSuper}), we have
\begin{equation}
\label{LemcompEnfin}
\forall t \in [T^{*},T],\ \text{a.e.}\ x \in \Omega,\  0 \leq y(t,x) \leq \widetilde{y}(t,x),
\end{equation}
where $\widetilde{y}$ is the nonnegative solution to 
\begin{equation}
\left\{
\begin{array}{l l}
\partial_t \widetilde{y} - \Delta \widetilde{y} =  0 &\mathrm{in}\ (T^{*},T)\times\Omega,\\
\frac{\partial\widetilde{y}}{\partial n} = 0&\mathrm{on}\ (T^{*},T)\times\partial\Omega,\\
\widetilde{y}(T^{*},.)=y(T^{*},.)& \mathrm{in}\ \Omega.
\end{array}
\right.
\label{Heat}
\end{equation}
Therefore, by using \Cref{wplinfty} for \eqref{Heat}, we get that there exists $C:=C(\Omega)>0$ such that
\begin{equation}
\label{Estimheat}
\norme{\widetilde{y}}_{L^{\infty}((T^{*},T)\times\Omega)} \leq C \norme{y(T^{*},.)}_{L^{\infty}(\Omega)} \leq C \norme{y}_{L^{\infty}(Q_{T^*})}.
\end{equation}
By using \eqref{LemcompEnfin} and \eqref{Estimheat}, we obtain that $y \in L^{\infty}(Q_T)$, \eqref{yTNeg} and \eqref{estimWholeinterval} hold.
\end{proof}
\subsection{The fixed-point: definition of the application}
We begin with some notations.
Let us set
\begin{equation}
\label{defg}
g(s) = \left\{
\begin{array}{ll}
\displaystyle\frac{f(s)}{s} & \displaystyle \textrm{if}\ s \neq 0,\\
\displaystyle f'(0) &\displaystyle \textrm{if}\ s \= 0.
\end{array}
\right.
\end{equation}
The function $g$ is continuous and by using the fact that $f$ satisfies \eqref{finfty} with $\alpha\leq 2$, we deduce that for every $\varepsilon > 0$, there exists $C_{\varepsilon} > 0$ such that 
\begin{equation}
\label{estig}
\forall s \in \R,\ |g(s)|^{1/2} \leq \varepsilon \log(2+|s|) + C_{\varepsilon}.
\end{equation} 
The end of the section is devoted to the proof of \Cref{TheoNeg}.
\begin{proof}
Let $T>0$, $y_0 \in L^{\infty}(\Omega)$. \\
\indent Unless otherwise specified, we denote by $C$ various positive constants varying from line to line which may depend on $\Omega$, $\omega$, $T$.\\ 
\indent We will perform a Kakutani-Leray-Schauder's fixed-point argument in $L^{\infty}(Q_T)$.\\
\indent For each $z \in L^{\infty}(Q_T)$, we consider the linear system
\begin{equation}
\left\{
\begin{array}{l l}
\partial_t y  - \Delta y + g(z) y =  h 1_{\omega} &\mathrm{in}\ (0,T)\times\Omega,\\
\frac{\partial y}{\partial n} = 0&\mathrm{on}\ (0,T)\times\partial\Omega,\\
y(0,.)=y_0& \mathrm{in}\ \Omega.
\end{array}
\right.
\label{eqlinaFixPoint}
\end{equation}
We set
\begin{equation}
\label{defT*}
T_z^{*} := \min\Big(T, \norme{g(z)}_{L^{\infty}(Q_T)}^{-1/2}\Big).
\end{equation}
According to \Cref{TheoControlInfty}, there exists a control $h_z \in L^{\infty}(Q_{T_z^{*}})$ satisfying 
\begin{equation}
\label{estcontFixPointINT}
\begin{array}{ll}
&\displaystyle \norme{h_z}_{L^{\infty}(Q_{T_z^{*}})}\\
& \displaystyle \leq\exp\left(C\left(1+\frac{1}{T_z^{*}}+T_z^{*}\norme{g(z)}_{L^{\infty}(Q_T)} + \norme{g(z)}_{L^{\infty}(Q_T)}^{1/2}\right)\right)\norme{y_0}_{L^2(\Omega)} \\
& \displaystyle \leq \exp\Big(C\left(1 +\norme{g(z)}_{L^{\infty}(Q_T)}^{1/2} \right)\Big)\norme{y_0}_{L^2(\Omega)},
\end{array}
\end{equation}
\normalsize
such that the solution $y$ of \eqref{eqlinaFixPoint} in $(0,T_z^{*})\times\Omega$ with $h=h_z$ satisfies
\begin{equation}
\label{yTnegInt}
y(T_z^*, .)\geq 0.
\end{equation}
By extending by $0$ the control $h_z$ in $(T_z^{*},T)$, we get from \eqref{estcontFixPointINT}
\begin{equation}
\label{estcontFixPoint}
\norme{h_z}_{L^{\infty}(Q_T)} \leq \exp\Big(C\left(1 +\norme{g(z)}_{L^{\infty}(Q_T)}^{1/2} \right)\Big)\norme{y_0}_{L^2(\Omega)}.
\end{equation}
\indent For each $z \in L^{\infty}(Q_T)$, we introduce the set of controls
\begin{equation}
\label{SetHz}
H(z) := \{ h_z \in L^{\infty}(Q_T)\ ;\ h_z\ \text{fulfills}\ \eqref{estcontFixPoint}\ \text{and}\ h_z \equiv 0\ \text{in}\ (T_z^{*},T)\times\Omega\}.
\end{equation}
We have the following facts.
\begin{claim}
\label{Prop1Hz}
For every $z \in L^{\infty}(Q_T)$, $H(z)$ is compact for the weak-star topology of $L^{\infty}(Q_T)$.
\end{claim}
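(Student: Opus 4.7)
The plan is to exhibit $H(z)$ as a weak-star closed subset of a weak-star compact ball in $L^\infty(Q_T)$, and then invoke Banach--Alaoglu.

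Set $R_z := \exp\bigl(C(1+\norme{g(z)}_{L^\infty(Q_T)}^{1/2})\bigr)\norme{y_0}_{L^2(\Omega)}$, so that the defining bound \eqref{estcontFixPoint} reads $\norme{h_z}_{L^\infty(Q_T)} \leq R_z$. First I would note that $H(z)$ is contained in the closed ball $B_{R_z} := \{h \in L^\infty(Q_T) : \norme{h}_{L^\infty(Q_T)} \leq R_z\}$. Since $L^\infty(Q_T)$ is the topological dual of the separable Banach space $L^1(Q_T)$, the Banach--Alaoglu theorem asserts that $B_{R_z}$ is compact for the weak-star topology $\sigma(L^\infty,L^1)$.

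Next I would verify that $H(z)$ is weak-star closed in $B_{R_z}$. Let $(h_n)_{n\in\N} \subset H(z)$ and suppose $h_n \rightharpoonup^* h$ in $L^\infty(Q_T)$. The norm bound $\norme{h}_{L^\infty(Q_T)}\leq R_z$ passes to the limit, either by weak-star lower semicontinuity of the norm, or directly: for every $\phi \in L^1(Q_T)$ one has $\abs{\int_{Q_T} h\phi}=\lim_n\abs{\int_{Q_T} h_n\phi}\leq R_z\norme{\phi}_{L^1(Q_T)}$. For the vanishing condition, fix any $\phi \in L^1(Q_T)$ supported in $(T_z^*,T)\times\Omega$; then $\int_{Q_T} h_n \phi = 0$ for every $n$, hence $\int_{Q_T} h\phi = 0$. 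Since this holds for every such test function, $h \equiv 0$ a.e.\ on $(T_z^*,T)\times\Omega$. Thus $h \in H(z)$, which proves that $H(z)$ is weak-star closed in $B_{R_z}$.

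Finally, a closed subset of a compact space is compact, so $H(z)$ is weak-star compact, which is the claim. There is no genuine obstacle here: the statement is essentially Banach--Alaoglu combined with the observation that the two defining constraints of $H(z)$ are both affine, closed conditions which are respected by the weak-star topology.
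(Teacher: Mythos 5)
Your argument is correct and is exactly the routine justification the paper leaves implicit (it states this Fact without proof): $H(z)$ sits inside a norm ball that is weak-star compact by Banach--Alaoglu, and both defining conditions are weak-star closed. The only point worth making explicit is that your sequential verification of closedness suffices precisely because $L^1(Q_T)$ is separable, so the weak-star topology on the ball $B_{R_z}$ is metrizable --- you already cite separability, so this is a one-line addendum rather than a gap.
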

\begin{claim}
\label{Prop2Hz}
Assume that $z_k \rightarrow z$ in $L^{\infty}(Q_T)$ and $h_k \in H(z_k) {\rightharpoonup}^*\  h\ \text{in}\ L^{\infty}(Q_T)\ \text{as}\ k \rightarrow + \infty$. Then, we have $h \in H(z)$.
\end{claim}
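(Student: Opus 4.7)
The plan is to check that the weak-$\ast$ limit $h$ inherits the two defining properties of $H(z)$: the $L^{\infty}$-bound \eqref{estcontFixPoint} and the vanishing condition $h\equiv 0$ on $(T_z^{*},T)\times\Omega$. The crucial preliminary step is to establish that strong convergence $z_k\to z$ in $L^{\infty}(Q_T)$ forces strong convergence $g(z_k)\to g(z)$ in $L^{\infty}(Q_T)$, and hence, by \eqref{defT*}, $T_{z_k}^{*}\to T_z^{*}$. This uses that $g$ defined in \eqref{defg} is continuous on $\R$ (the only delicate point, at $s=0$, follows from $f\in C^{1}$ with $f(0)=0$), together with the uniform boundedness of $\{z_k\}$ in $L^{\infty}$ inherited from its strong convergence: $g$ is then uniformly continuous on a fixed compact interval enclosing all essential ranges.

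To verify \eqref{estcontFixPoint} for $h$, I would combine the weak-$\ast$ lower semicontinuity
\begin{equation*}
\norme{h}_{L^{\infty}(Q_T)}\;\le\;\liminf_{k\to\infty}\norme{h_k}_{L^{\infty}(Q_T)}
\end{equation*}
with the bound $\norme{h_k}_{L^{\infty}(Q_T)}\le\exp\bigl(C(1+\norme{g(z_k)}_{L^{\infty}(Q_T)}^{1/2})\bigr)\norme{y_0}_{L^{2}(\Omega)}$ coming from $h_k\in H(z_k)$. The continuity of $t\mapsto\exp(C(1+t^{1/2}))$, applied at $t=\norme{g(z_k)}_{L^{\infty}(Q_T)}\to\norme{g(z)}_{L^{\infty}(Q_T)}$, then lets the right-hand side pass to the limit and yields the desired inequality.

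For the vanishing on $(T_z^{*},T)\times\Omega$, fix $\delta>0$. Since $T_{z_k}^{*}\to T_z^{*}$, there exists $K(\delta)$ such that for every $k\ge K(\delta)$ we have $T_{z_k}^{*}\le T_z^{*}+\delta$, and consequently $h_k\equiv 0$ a.e.\ on $(T_z^{*}+\delta,T)\times\Omega$. Testing the weak-$\ast$ convergence $h_k\rightharpoonup^{\ast} h$ against arbitrary $\varphi\in L^{1}(Q_T)$ supported in $(T_z^{*}+\delta,T)\times\Omega$ shows $h\equiv 0$ a.e.\ on this set. Running through a countable sequence $\delta_n\downarrow 0$ and taking the union concludes $h\equiv 0$ a.e.\ on $(T_z^{*},T)\times\Omega$, and therefore $h\in H(z)$. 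The only technical point is to control the $z$-dependence of the two defining constraints through the continuous dependence $z\mapsto\bigl(\norme{g(z)}_{L^{\infty}(Q_T)},T_z^{*}\bigr)$; once that is in hand, everything reduces to weak-$\ast$ semicontinuity and elementary limit arguments.
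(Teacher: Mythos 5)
Your proof is correct. The paper states this Fact without giving any proof, so there is nothing to compare against; your argument — uniform continuity of $g$ on a compact interval containing the essential ranges to get $\norme{g(z_k)}_{L^{\infty}(Q_T)}\to\norme{g(z)}_{L^{\infty}(Q_T)}$ and hence $T_{z_k}^{*}\to T_z^{*}$, weak-$\ast$ lower semicontinuity of the norm combined with the convergence of the right-hand side of \eqref{estcontFixPoint}, and the exhaustion of $(T_z^{*},T)\times\Omega$ by the sets $(T_z^{*}+\delta,T)\times\Omega$ on which the $h_k$ eventually vanish — is exactly the natural way to fill this gap and is sound.
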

\indent We define the set-valued mapping $\Phi: L^{\infty}(Q_T)\rightarrow \mathcal{P}(L^{\infty}(Q_T))$ in the following way. For every $z \in L^{\infty}(Q_T)$, $\Phi(z)$ is the set of $y \in L^{\infty}(Q_T)$ such that for some $h_z \in H(z)$, $y$ is the solution of \eqref{eqlinaFixPoint} and this solution satisfies \eqref{yTnegInt}.\\
\indent We recall the Kakutani-Leray-Schauder's fixed point theorem (see \cite[Theorem 2.2, Theorem 2.4]{Granas}).
\begin{theo}[Kakutani-Leray-Schauder's fixed point theorem]\label{kakLeray}
If
\begin{enumerate}[nosep]
\item $\Phi$ is a Kakutani map, that is to say for every $z \in L^{\infty}(Q_T)$, $\Phi(z)$ is a nonempty convex and closed subset of $L^{\infty}(Q_T)$,
\item $\Phi$ is compact, that is to say for every bounded set $B \subset L^{\infty}(Q_T)$, there exists a compact set $K \subset L^{\infty}(Q_T)$ such that for every $z \in B$, $\Phi(z) \subset K$,
\item $\Phi$ is upper semicontinuous in $L^{\infty}(Q_T)$, that is to say for all closed subset $\mathcal{A} \subset L^{\infty}(Q_T)$, $\Phi^{-1}(\mathcal{A}) = \{z \in L^{\infty}(Q_T)\ ;\ \Phi(z)\cap \mathcal{A} \neq \emptyset \}$ is closed,
\item $\mathcal{F} := \{ y \in L^{\infty}(Q_T)\ ;\ \exists \lambda \in (0,1),\ y \in \lambda \Phi(y)\}$ is bounded in $L^{\infty}(Q_T)$,
\end{enumerate}
hold.\\
\indent Then $\Phi$ has a fixed point, i.e, there exists $y \in L^{\infty}(Q_T)$ such that $y \in \Phi(y)$.
\end{theo}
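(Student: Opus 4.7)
The plan is to deduce this Leray--Schauder-type alternative for multivalued maps from a fixed-point theorem on a closed bounded convex set (Bohnenblust--Karlin, the set-valued analogue of Schauder's theorem for compact multivalued maps with nonempty compact convex values), combined with a truncation argument in which hypothesis~(4) is used precisely to exclude boundary obstructions on a sufficiently large ball. The fact that $\Phi$ is defined on the unbounded space $L^{\infty}(Q_T)$ is compensated by cutting everything off at a radius strictly larger than the \emph{a priori} bound supplied by~(4).

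\textbf{Truncation and fixed point on a ball.} By hypothesis~(4), fix $R>0$ with $\norme{y}_{L^{\infty}(Q_T)} \leq R$ for every $y \in \mathcal{F}$, and choose $M > R$. Set $K := \{y \in L^{\infty}(Q_T) : \norme{y}_{L^{\infty}(Q_T)} \leq M\}$, and let $r : L^{\infty}(Q_T) \to K$ be the continuous radial retraction, $r(y)=y$ for $\norme{y} \leq M$ and $r(y) = My/\norme{y}$ otherwise. Define
\begin{equation*}
\Psi : K \longrightarrow \mathcal{P}(K), \qquad \Psi(y) := \overline{\mathrm{co}}\bigl(r(\Phi(y))\bigr).
\end{equation*}
Combining (1)--(3) with the continuity of $r$ and the fact that the closed convex hull of a compact set of a Banach space is compact, one checks that $\Psi$ has nonempty closed convex values, is upper semicontinuous, and that $\Psi(K)$ is relatively compact in $K$. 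The Bohnenblust--Karlin theorem, applied on the compact convex set $\overline{\mathrm{co}}(\Psi(K)) \subset K$, then produces $y \in K$ with $y \in \Psi(y)$.

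\textbf{Elimination of the retraction and main obstacle.} The delicate point is passing from $y \in \Psi(y)$ to $y \in \Phi(y)$; this is the only truly nontrivial step because both $r$ and the closed convex hull can in principle create spurious fixed points. The argument I would actually carry out goes by degree theory and avoids this pitfall: equip compact upper semicontinuous multivalued maps with convex compact values with the Cellina--Lasota topological degree, and consider the homotopy $H_{\lambda}(y) := y - \lambda \Phi(y)$ on~$K$. For $\lambda=0$ the only zero is $y=0 \in \mathrm{int}(K)$; for $\lambda \in (0,1)$, any zero on~$\partial K$ would place $y$ in $\mathcal{F}$, forcing $\norme{y} \leq R < M$, a contradiction; for $\lambda=1$, either $\Phi$ already has a fixed point on $\partial K$ (and we are done), or $H_1$ has no zero on $\partial K$. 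In the latter case, homotopy invariance of the degree gives
\begin{equation*}
\deg\bigl(I - \Phi,\, \mathrm{int}(K),\, 0\bigr) \;=\; \deg\bigl(I,\, \mathrm{int}(K),\, 0\bigr) \;=\; 1 \ne 0,
\end{equation*}
producing $y \in \mathrm{int}(K)$ with $y \in \Phi(y)$. The main obstacle is thus foundational rather than computational: one needs the existence and homotopy invariance of a degree theory for compact multivalued maps with convex compact values, which is the content of Granas--Dugundji's treatment cited in the paper.
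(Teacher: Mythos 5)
The paper offers no proof of this statement: it is quoted as a known theorem with the proof delegated to \cite[Theorem 2.2, Theorem 2.4]{Granas}, so any argument you supply is by construction a different route. Your final, degree-theoretic argument is logically sound: hypotheses (1)--(3) do give an upper semicontinuous compact map with compact convex values (each $\Phi(z)$ is closed and contained in a compact set), the radius $M>R$ guarantees the admissibility of the homotopy $H_{\lambda}(y)=y-\lambda\Phi(y)$ on $\partial K$ for $\lambda\in(0,1)$ via hypothesis (4), and your three-case check of the boundary condition is the standard Leray--Schauder scheme. You were also right to abandon the first attempt: a fixed point of $y\mapsto\overline{\mathrm{co}}\bigl(r(\Phi(y))\bigr)$ genuinely need not descend to a fixed point of $\Phi$. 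The one caveat is that your proof routes the entire difficulty through the existence and homotopy invariance of a degree for compact convex-valued upper semicontinuous maps, a black box whose construction (via Cellina-type continuous approximate selections plus the single-valued Leray--Schauder degree) is at least as heavy as the theorem itself; this is acceptable given that the paper cites the whole theorem, but it is worth knowing the more economical standard argument, which needs no degree and no convex hull: the composition $r\circ\Phi$ of the Kakutani map $\Phi$ with the continuous radial retraction $r$ onto $K$ stays in an admissible class for which the Bohnenblust--Karlin/Schauder-type fixed point theorem still applies, so there is $y=r(w)$ with $w\in\Phi(y)$; either $\norme{w}_{L^{\infty}(Q_T)}\leq M$ and $y=w\in\Phi(y)$, or $y=\lambda w$ with $\lambda=M/\norme{w}_{L^{\infty}(Q_T)}\in(0,1)$, whence $y\in\mathcal{F}$ and $\norme{y}_{L^{\infty}(Q_T)}\leq R<M=\norme{y}_{L^{\infty}(Q_T)}$, a contradiction. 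That argument is presumably the one in the cited reference and buys the same conclusion from strictly lighter machinery.
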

\subsection{Hypotheses of Kakutani-Leray-Schauder's fixed point theorem}
We will check that the four hypotheses of \Cref{kakLeray} hold.\\
\indent \textbf{The point $(1)$ holds.} Indeed, for every $z \in L^{\infty}(Q_T)$, we have seen that $\Phi(z)$ is nonempty. The convexity of $\Phi(z)$ comes from the fact that the inequality \eqref{yTnegInt} is stable by convex combinations. Let us show that $\Phi(z)$ is closed. Let $(y_k)_{k \in \N}$ be a sequence of elements in $L^{\infty}(Q_T)$, such that for every $ k \in \N$, $y_k \in \Phi(z)$ and $y_k \rightarrow y$ in $L^{\infty}(Q_T)$. Then, for every $k \in \N$, there exists a control $h_k \in H(z)$ such that $y_k$ is the solution to 
\begin{equation}
\left\{
\begin{array}{l l}
\partial_t y_k - \Delta y_k + g(z) y_k=  h_k 1_{\omega} &\mathrm{in}\ (0,T)\times\Omega,\\
\frac{\partial y_k}{\partial n}= 0&\mathrm{on}\ (0,T)\times\partial\Omega,\\
y_k(0,.)=y_0& \mathrm{in}\ \Omega,
\end{array}
\right.
\label{eqlinaFixPointCloses}
\end{equation}
and this solution satisfies 
\begin{equation}
\label{FirstPt3FCloses}
y_k(T_z^*,.)\geq 0.
\end{equation}
By using \Cref{Prop1Hz}, \Cref{wpl2} and the embedding \eqref{injclassique}, we get that there exist a strictly increasing sequence $(k_l)_{l \in \N}$ of integers and $h \in H(z)$ such that 
\begin{equation}
\label{FirstPt31Closes}
h_{k_l} {\rightharpoonup}^*\  h\ \text{in}\ L^{\infty}(Q_T)\ \text{as}\ l \rightarrow + \infty,
\end{equation}
\begin{equation}
\label{FirstPt32Closes}
y_{k_l} \rightharpoonup y\ \text{in}\ W_T\  \Rightarrow y_{k_l}(0,.)\rightharpoonup y_0,\ y_{k_l}(T_z^*,.){\rightharpoonup}\ y(T_z^*,.)\ \text{in}\ L^2(\Omega)\quad \text{as}\ l \rightarrow +\infty.
\end{equation}
By passing to the limit as $l \rightarrow + \infty$ in \eqref{eqlinaFixPointCloses}, \eqref{FirstPt3FCloses} and by using \eqref{FirstPt31Closes} and \eqref{FirstPt32Closes}, we get that $y \in \Phi(z)$. This concludes the proof of the point (1).\\
\indent \textbf{The point $(2)$ holds.} Let $B$ be a bounded set of $L^{\infty}(Q_T)$. By using \eqref{estcontFixPoint} and \Cref{wplinfty} applied to \eqref{eqlinaFixPoint}, we deduce that there exists $R>0$ such that for every $z \in B$, for every $y \in \Phi(z)$ associated to a control $h_z \in H(z)$, we have
\begin{equation}
\label{Estimzyh}
z, y, h_z \in B_R := \{ \zeta \in L^{\infty}(Q_T)\ ;\ \norme{\zeta}_{L^{\infty}(Q_T)} \leq R\}.
\end{equation}
Let $Y\in L^{\infty}(Q_T)$ be the solution to the Cauchy problem
\begin{equation}
\left\{
\begin{array}{l l}
\partial_t {Y} -  \Delta Y = 0 &\mathrm{in}\ (0,T)\times\Omega,\\
\frac{\partial Y}{\partial n} = 0 &\mathrm{on}\ (0,T)\times\partial\Omega,\\
Y(0,.)=y_0 &\mathrm{in}\  \Omega.
\end{array}
\right.
\end{equation}
Let $y^{*} = y-Y$, where $y \in \Phi(z)$, with $z \in B$, associated to a control $h_z \in H(z)$. Then, $y^{*}$ is the solution to
\begin{equation}
\left\{
\begin{array}{l l}
\partial_t {y^{*}} - \Delta y^{*} + g(z)y =h_z  1_{\omega} &\mathrm{in}\ (0,T)\times\Omega,\\
\frac{\partial y^{*}}{\partial n} = 0 &\mathrm{on}\ (0,T)\times\partial\Omega,\\
y^{*}(0,.)=0 &\mathrm{in}\  \Omega.
\end{array}
\right.
\label{zetaetoile}
\end{equation}
From \eqref{Estimzyh}, we have
\begin{equation}
\norme{-g(z)y + h_z 1_{\omega}}_{L^{\infty}(Q_T)} \leq C_R.
\label{estimationsourcezeta*}
\end{equation}
From \eqref{estimationsourcezeta*}, a maximal parabolic regularity theorem in $L^p$ (see \cite[Theorem 2.1]{DHP}), with $p=N+2$, applied to $y^{*}$, solution of \eqref{zetaetoile}, we deduce that
\begin{equation}
\label{EstimateLpystar}
y^{*}\in X_p := W^{1,p}(0,T;L^p(\Omega)) \cap L^p(0,T;W^{2,p}(\Omega))\ \text{and}\ \norme{y^{*}}_{X_p} \leq C_R.
\end{equation}
By the Sobolev embedding theorem $X_p \hookrightarrow C^{\beta/2,\beta}(\overline{Q_T})$ with $\beta>0$ (see \cite[Theorem 1.4.1]{WYW}), we deduce that $y^{*} \in C^{0}(\overline{Q_T})$ and
\begin{equation}
\forall (t,x) \in \overline{Q_T},\ \forall (t',x') \in \overline{Q_T},\ |y^{*}(t,x) - y^{*}(t',x')|\leq C_R(|t-t'|^{\beta/2} + |x-x'|^{\beta}).
\label{holder}
\end{equation}
Let $K^{*}$ be the set of $y^{*}$ such that \eqref{holder} holds. Then, we have $K:=(Y+ K^{*}) \cap B_R$ is a compact convex subset of $L^{\infty}(Q_T)$ by Ascoli's theorem and
\[ \forall z \in B,\ \Phi(z) \subset K.\]
This concludes the proof of the point (2).\\
\indent \textbf{The point (3) holds.} Let $\mathcal{A}$ be a closed subset of $L^{\infty}(Q_T)$. Let $(z_k)_{k \in \N}$ be a sequence of elements in $L^{\infty}(Q_T)$, $(y_k)_{k \in \N}$ be a sequence of elements in $L^{\infty}(Q_T)$, and $z \in L^{\infty}(Q_T)$ be such that
\begin{equation}
\label{FirstPt3}
z_k \rightarrow z \ \mathrm{in}\ L^{\infty}(Q_T)\ \text{as}\ k \rightarrow + \infty,
\end{equation}
\begin{equation}
\label{FirstPt3B}
\forall k \in \N,\ y_k \in \mathcal{A},
\end{equation}
\begin{equation}
\label{FirstPt3T}
\forall k \in \N,\ y_k \in \Phi(z_k).
\end{equation}
By \eqref{FirstPt3T} and \eqref{estcontFixPoint}, for every $k \in \N$, there exists a control $h_k \in H(z_k)$ such that $y_k$ is the solution to 
\begin{equation}
\left\{
\begin{array}{l l}
\partial_t y_k - \Delta y_k + g(z_k) y_k=  h_k 1_{\omega} &\mathrm{in}\ (0,T)\times\Omega,\\
\frac{\partial y_k}{\partial n}= 0&\mathrm{on}\ (0,T)\times\partial\Omega,\\
y_k(0,.)=y_0& \mathrm{in}\ \Omega,
\end{array}
\right.
\label{eqlinaFixPointBis}
\end{equation}
and this solution satisfies 
\begin{equation}
\label{FirstPt3F}
y_k(T_{z_k}^*,.)\geq 0.
\end{equation}
By \eqref{FirstPt3}, \Cref{Prop2Hz} and the point (2) of \Cref{kakLeray}, we get that there exist a strictly increasing sequence $(k_l)_{l \in \N}$ of integers, $h \in H(z)$ and $y \in L^{\infty}(Q_T)$ such that 
\begin{equation}
\label{FirstPt31}
h_{k_l} {\rightharpoonup}^*\  h\ \text{in}\ L^{\infty}(Q_T)\ \text{as}\ l \rightarrow + \infty,
\end{equation}
\begin{equation}
\label{FirstPt32}
y_{k_l} \rightarrow y\ \text{in}\ L^{\infty}(Q_T)\ \text{as}\ l \rightarrow + \infty.
\end{equation}
Since $\mathcal{A}$ is closed, \eqref{FirstPt3B} and \eqref{FirstPt32} imply that $y \in \mathcal{A}$. Hence, it suffices to check that
\begin{equation}
\label{FirstPt333}
y \in \Phi(z).
\end{equation}
Letting $l \rightarrow + \infty$ in \eqref{eqlinaFixPointBis} and \eqref{FirstPt3F} and using \eqref{FirstPt3}, \eqref{FirstPt31} and \eqref{FirstPt32}, we get that $y$ satisfies \eqref{eqlinaFixPoint} and \eqref{yTnegInt}. Hence, \eqref{FirstPt333} holds. This concludes the proof of the point (3).\\
\indent \textbf{The point (4) holds.} Let $y \in \mathcal{F}$. Then, for some $\lambda \in (0,1)$ and $h_y \in H(y)$, we have 
\begin{equation*}
\label{heatSLFixedPoint}
\left\{
\begin{array}{l l}
\partial_t y-  \Delta y + f(y) =  \lambda h_y 1_{\omega} &\mathrm{in}\ (0,T)\times\Omega,\\
\frac{\partial y}{\partial n}= 0&\mathrm{on}\ (0,T)\times\partial\Omega,\\
y(0,.)=\lambda y_0& \mathrm{in}\ \Omega.
\end{array}
\right.
\end{equation*}
and 
\begin{equation*}
y(T_y^{*},.) \geq 0.
\end{equation*}
Therefore, by using \Cref{LemComp} and \Cref{wplinfty}, we have
\begin{equation}
\label{estiSolFixPoint1}
\begin{array}{ll}
\displaystyle \norme{y}_{L^{\infty}(Q_T)}& \leq \displaystyle C \norme{y}_{L^{\infty}(Q_{T_y^{*}})} \\
&\displaystyle \leq C \exp\left(CT_y^* \norme{g(y)}_{L^{\infty}(Q_T)}\right)\left(\norme{y_0}_{L^{\infty}(\Omega)} + \norme{h_y}_{L^{\infty}(Q_T)}\right).
\end{array}
\end{equation} 
Consequently, by taking into account the definition of $T_y^{*}$, i.e., \eqref{defT*} and using \eqref{estcontFixPoint}, \eqref{estiSolFixPoint1}, \eqref{estig}, we deduce that 
\begin{equation}
\label{estiSolFixPoint2}
\begin{array}{ll}
\displaystyle\norme{y}_{L^{\infty}(Q_T)}  &\displaystyle\leq \exp\left(C\left(1 +\norme{g(y)}_{L^{\infty}(Q_T)}^{1/2} \right)\right) \norme{y_0}_{L^{\infty}(\Omega)}\\
&\displaystyle \leq \exp\left(C\left(1+\varepsilon \log\left(2+\norme{y}_{L^{\infty}(Q_T)} \right) + C_{\varepsilon}\right)\right)\norme{y_0}_{L^{\infty}(\Omega)}\\
&\displaystyle \leq  \exp\left(C_\varepsilon\right) \left(2+\norme{y}_{L^{\infty}(Q_T)} \right)^{ \varepsilon C} \norme{y_0}_{L^{\infty}(\Omega)}.
\end{array}
\end{equation}
Therefore, by taking $\varepsilon$ sufficiently small such that $\varepsilon C = 1/2$, we deduce from \eqref{estiSolFixPoint2} that $\mathcal{F}$ is bounded in $L^{\infty}(Q_T)$. This concludes the proof of the point (4).\\
\indent By \Cref{kakLeray}, $\Phi$ has a fixed point $y$. We denote by $h_y$ the associated control. Then, by using \Cref{LemComp}, $y$ is the solution to \eqref{heatSL} with control $h_y$ such that \eqref{yTNeg} holds. This concludes the proof of \Cref{TheoNeg}.
\end{proof}

\section{Application of the global nonnegative-controllability to the large time global null-controllability}
\label{sectionCorGlobal}
In this section, we prove \Cref{CorGlobal}. We assume that \eqref{finfty} holds for $\alpha \in [3/2,2]$, $f(s) > 0$ for $s > 0$ and $1/f \in L^1([1,+\infty))$. 
\begin{proof} Let $y_0 \in L^{\infty}(\Omega)$. The proof is divided into three steps.\\
\indent \textbf{Step 1: Steer the solution to a nonnegative state in time $T_1:=1$.} By using \Cref{TheoNeg}, there exists $h_1 \in L^{\infty}(Q_{T_{1}})$ such that the solution $y$ to \eqref{heatSL} replacing $T \leftarrow T_1$ satisfies
$$ y_{T_1} := y(T_1,.) \geq 0.$$
\indent \textbf{Step 2: Dissipation of $f$ on $\R^{+}$ and comparison to an ordinary differential equation.} We set 
$$ h_2(t,.) := 0,\ \text{for}\ t \in [T_1,T_2],$$
with $T_2$ which will be determined later.\\
\indent Then, by using the comparison principle given in \Cref{SubSuper}, we deduce that the solution $y$ to 
\begin{equation*}
\left\{
\begin{array}{l l}
\partial_t y-  \Delta y  =  - f(y)&\mathrm{in}\ (T_1,T_2)\times\Omega,\\
\frac{\partial y}{\partial n}= 0&\mathrm{on}\ (T_1,T_2)\times\partial\Omega,\\
y(T_1,.)=y_{T_1}& \mathrm{in}\ \Omega,
\end{array}
\right.
\end{equation*}
satisfies
\begin{equation}
\label{CompareEDO}
\forall t \in  [T_1,T_2], \ \text{a.e.}\ x \in \Omega,\ 0 \leq y(t,x) \leq v(t),
\end{equation}
where $v$ is the (global) nonnegative solution to the ordinary differential equation
\begin{equation}
\label{EDO}
\left\{
\begin{array}{l l}
\dot{v}(t)  =  - f(v(t))&\mathrm{in}\ (T_1,+\infty),\\
v(T_1)=\norme{y_{T_1}}_{L^{\infty}(\Omega)}+1&.
\end{array}
\right.
\end{equation}
A straightforward calculation leads to 
\begin{equation}
\label{FormuleV}
\forall t \in [T_1,+\infty),\ v(t) > 0\ \text{and}\  F(v(t))-F(v(T_1)) = t-T_1,
\end{equation}
where $F$ is defined as follows
\begin{equation}
\label{DefiF}
\forall s > 0,\ F(s) = \int_{+\infty}^{s} \frac{-1}{f(\sigma)} d \sigma = \int_{s}^{+\infty} \frac{1}{f(\sigma)} d \sigma.
\end{equation}
Note that $F$ is well-defined because $f(\sigma) > 0$ for every $\sigma > 0$ and $1/f \in L^{1}([1,+\infty))$ by hypothesis. We check that $F$ is a $C^{1}$ strictly decreasing function. Moreover, we have $1/f \notin L^1((0,1])$ because $f \in C^1(\R;\R)$ and $f(0)=0$. Hence, we have by \eqref{DefiF}
\begin{equation}
\label{AsymptoticF}
 \lim_{s \rightarrow 0^{+}} F(s) = +\infty\ \text{and}\ \lim_{s \rightarrow +\infty} F(s) = 0.
 \end{equation}
Therefore, we deduce that $F : (0,+\infty) \rightarrow (0,+\infty) $ is a $C^1$-diffeomorphism. We denote by $F^{-1}:(0,+\infty) \rightarrow (0,+\infty) $ its inverse, which is strictly decreasing. Then, by \eqref{FormuleV}, we have 
\begin{equation}
\label{FormuleVBis}
\forall t \in [T_1,+\infty),\ v(t) = F^{-1}(t-T_1 + F(v(T_1)) \leq F^{-1}(t-T_1).
\end{equation}
The estimate \eqref{FormuleVBis} is the key point because it states that we can upperbound $v$ by a function independent of the size of $v(T_1)$ and we also have 
\begin{equation}
\label{FormuleVTer}
F^{-1}(t-T_1) \rightarrow 0\ \text{as}\ t \rightarrow + \infty,
\end{equation}
by using \eqref{AsymptoticF}.\\
\indent Let $\delta > 0$ be such that the null-controllability of \eqref{heatSL} holds in $B_{L^{\infty}(\Omega)}(0,\delta)$ in time $T=1$. The existence of $\delta$ is given by \Cref{ResLocal}. \\
\indent By \eqref{FormuleVTer}, we deduce that there exists $T_2$ sufficiently large such that
\begin{equation}
\label{FormuleVQ}
F^{-1}(T_2-T_1) \leq \delta.
\end{equation}
Consequently, by using \eqref{CompareEDO}, \eqref{FormuleVBis}, \eqref{FormuleVQ}, we have
\begin{equation}
\label{CompareEDOBis}
\text{a.e.}\ x \in \Omega,\ 0 \leq y(T_2,x) \leq \delta.
\end{equation}
\indent \textbf{Step 3: Local null-controllability.} By using \Cref{ResLocal} with $T=1$, we deduce from \eqref{CompareEDOBis} that there exists a control $h_3 \in L^{\infty}((T_2,T_3)\times\Omega)$ with $T_3 :=T_2+1$ such that the solution $y$ of \eqref{heatSL} replacing $(0,T) \leftarrow (T_2,T_3)$ satisfies $y(T_3,.)=0$.\\
\indent To sum up, the control
$$ h(t,. ) := 
\left\{
\begin{array}{ll}

h_1(t,.)& \text{for}\ t \in (0,T_1),\\ 
h_2(t,.)& \text{for}\ t \in (T_1,T_2),\\
h_3(t,.)& \text{for}\ t \in (T_2,T_3),

\end{array}
\right.
$$
steers the initial data $y_0 \in L^{\infty}(\Omega)$ to $0$. It is worth mentioning that the final time of control $T_3$ does not depend on $y_0$. This concludes the proof of \Cref{CorGlobal}.
\end{proof}

\section{Dirichlet boundary conditions}
\label{SubDir}
\Cref{TheoNeg} and \Cref{CorGlobal} remain valid for Dirichlet boundary conditions, as to say for 
\begin{equation}
\label{heatSLDir}
\left\{
\begin{array}{l l}
\partial_t y-  \Delta y + f(y) =  h 1_{\omega} &\mathrm{in}\ (0,T)\times\Omega,\\
y= 0&\mathrm{on}\ (0,T)\times\partial\Omega,\\
y(0,.)=y_0& \mathrm{in}\ \Omega.
\end{array}
\right.
\end{equation}
The main point is to establish a $L^1$-Carleman estimate similar to \Cref{CarlL1} for
\begin{equation}
\left\{
\begin{array}{l l}
-\partial_t q - \Delta q + a(t,x) q = 0  &\mathrm{in}\ (0,T)\times\Omega,\\
q= 0 &\mathrm{on}\ (0,T)\times\partial\Omega,\\
q(T,.)=q_T& \mathrm{in}\ \Omega.
\end{array}
\right.
\label{eqlinaAdjDir}
\end{equation}
We keep the notations of \Cref{SectionCarlL1}.
\begin{theo}
\label{CarlL1Dir}
There exists two constants $C=C(\Omega,\omega)>0$ and $C_1 := C_1(\Omega,\omega)>0$, such that, \begin{equation}
\forall \lambda \geq 1,\qquad \forall s \geq s_1(\lambda):=C(\Omega,\omega)\left(e^{2\lambda \norme{\eta^0}_{\infty}}T+T^{2}+T^2 \norme{a}_{L^{\infty}(Q_T)}^{1/2}\right),
\label{deflambd1s1Dir}
\end{equation}
for every $q_T \in L^2(\Omega;\R^{+})$, the nonnegative solution $q$ of \eqref{eqlinaAdjDir} satisfies
\begin{align}
\label{carlDir}
\lambda \int_{Q_T} e^{-s\alpha}  s \xi^{2}  \eta^{0} q + \int_{Q_T}  e^{-s\alpha} \xi q \leq C_1 \lambda \int_{(0,T)\times\omega} e^{-s\alpha} s \xi^{2}  q  dxdt.
\end{align}
\end{theo}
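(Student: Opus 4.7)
My plan is to adapt the strategy of \Cref{CarlL1}, exploiting the fact that under Dirichlet boundary conditions $\psi := e^{-s\alpha} q$ vanishes identically on $\Sigma_T$. This kills outright the boundary term $\int_{\Sigma_T} 2s\lambda\xi \frac{\partial\eta^0}{\partial n}\psi$ that forced the introduction of the auxiliary weight $\widetilde{\alpha}$ in the Neumann proof, so the five-step symmetrization of \Cref{CarlL1} is no longer needed. On the other hand, the contribution $\int_{\Sigma_T} \frac{\partial\psi}{\partial n} = \int_{\Sigma_T} e^{-s\alpha}\frac{\partial q}{\partial n}$ arising from $\int_{Q_T}\Delta\psi$ is a priori nonzero; as in the classical Fursikov--Imanuvilov argument, the cleanest way to eliminate it is to multiply the pointwise identity $M\psi = 0$ before integrating by the weight $\eta^0$ itself, since $\eta^0 = 0$ on $\partial\Omega$ makes all remaining boundary traces vanish.

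Concretely, I would integrate $\eta^0 M\psi = 0$ over $Q_T$. Two integrations by parts give $\int_{Q_T}\eta^0\Delta\psi = \int_{Q_T}\Delta\eta^0\,\psi$ and
\[
-2s\lambda\int_{Q_T}\eta^0\xi\,\nabla\eta^0\cdot\nabla\psi = 2s\lambda\int_{Q_T}\bigl(\xi|\nabla\eta^0|^2 + \eta^0\nabla\xi\cdot\nabla\eta^0 + \eta^0\xi\Delta\eta^0\bigr)\psi,
\]
where all boundary traces vanish thanks to $\eta^0|_{\partial\Omega} = 0$, $\psi|_{\Sigma_T} = 0$, and $\psi(0,\cdot) = \psi(T,\cdot) = 0$ (from the singular weight $\alpha$). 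Rearranging, the $\eta^0$-weighted identity becomes
\[
s^2\lambda^2\int_{Q_T}\eta^0|\nabla\eta^0|^2\xi^2\psi + 2s\lambda\int_{Q_T}\xi|\nabla\eta^0|^2\psi = \mathcal{R},
\]
where $\mathcal{R}$ collects the lower-order terms $s\lambda^2\eta^0\xi\psi$, $s|\alpha_t|\eta^0\psi$, $|a|\eta^0\psi$, $\Delta\eta^0\psi$, and $s\lambda\eta^0|\Delta\eta^0|\xi\psi$. The crucial observation is that the gradient integration by parts yields, on top of the expected dominant quantity $s^2\lambda^2\eta^0|\nabla\eta^0|^2\xi^2\psi$, a genuinely \emph{positive} lower-order term $2s\lambda\xi|\nabla\eta^0|^2\psi$; this is exactly the source of the second left-hand side $\int_{Q_T} e^{-s\alpha}\xi q$ in \eqref{carlDir}.

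Following the blueprint of \Cref{CarlL1}, I would next use $|\nabla\eta^0|^2\geq m > 0$ on $\overline{\Omega\setminus\omega_0}$ to lower bound each of the two positive left-hand side quantities by a full integral on $Q_T$ minus a contribution over $(0,T)\times\omega_0$, and then estimate the pieces of $\mathcal{R}$ by means of \eqref{UsefulEstimCarl} and Young-type bounds. The four terms $s\lambda^2\eta^0\xi\psi$, $s|\alpha_t|\eta^0\psi$, $|a|\eta^0\psi$, $s\lambda\eta^0|\Delta\eta^0|\xi\psi$ are each absorbed by a small fraction of $s^2\lambda^2\int_{Q_T}\eta^0\xi^2\psi$ provided $s\geq s_1(\lambda)$; the three resulting constraints produce exactly the contributions $e^{2\lambda\norme{\eta^0}_\infty}T$, $T^2$, and $T^2\norme{a}_{L^{\infty}(Q_T)}^{1/2}$ in \eqref{deflambd1s1Dir}. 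The remaining term $\int_{Q_T}\Delta\eta^0\,\psi$, which carries no power of $\xi$ nor of $s$, is absorbed instead by the second positive left-hand side $s\lambda\int_{Q_T}\xi\psi$ via the estimate $1\leq (T/2)^2\xi$ from \eqref{UsefulEstimCarl}. Dividing by $s\lambda$, using $\omega_0\subset\subset\omega$ together with $\xi\geq (T/2)^{-2}$ to turn $\int_{\omega_0}\xi\psi$ into a multiple of $\int_\omega\xi^2\psi$, and unfolding $\psi = e^{-s\alpha}q$ yields \eqref{carlDir}.

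The main subtle point is the absorption of $\int_{Q_T}\Delta\eta^0\,\psi$: it carries neither a power of $s$ nor a power of $\xi$, so without the bonus quantity $2s\lambda\int_{Q_T}\xi|\nabla\eta^0|^2\psi$ (absent from the Neumann analysis) there would be nothing left to absorb it. The appearance of this bonus term as an automatic consequence of the $\eta^0$-weighted integration by parts is precisely what forces, and justifies, the inclusion of the second summand $\int_{Q_T}e^{-s\alpha}\xi q$ in the left-hand side of \Cref{CarlL1Dir}; it also explains why no extra sign information on $\partial_n q$ (via Hopf's lemma) has to be invoked. The remaining work is bookkeeping that mirrors the absorption arithmetic already carried out for \Cref{CarlL1}.
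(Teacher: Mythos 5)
Your proposal is correct and follows essentially the same route as the paper: multiplying the identity $M\psi=0$ by $\eta^0$ before integrating, extracting the extra positive term $2s\lambda\int_{Q_T}\xi|\nabla\eta^0|^2\psi$ from the gradient integration by parts (which yields the second left-hand side of \eqref{carlDir} and absorbs $\int_{Q_T}\Delta\eta^0\,\psi$), and then running the same absorption arithmetic as in \Cref{CarlL1}. The identification of that bonus term as the reason no Hopf-type sign information on $\partial_n q$ is needed matches the paper's argument exactly.
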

\begin{proof}
\indent The proof follows the one of \Cref{CarlL1}. This is why we omit some details. We multiply the identity \eqref{eqM} by $\eta^{0}$ and we integrate over $(0,T)\times \Omega$
\small
\begin{equation}
\label{Integrateeta}
\begin{array}{ll}
&\displaystyle\int_{Q_T} s^{2} \lambda^2 |\nabla \eta^0|^2 \xi^2 \psi \eta^{0}- \int_{Q_T} 2s \lambda \xi (\nabla \eta^0 . \nabla \psi)  \eta^{0}+\int_{Q_T}  (\partial_t \psi) \eta^{0}+\int_{Q_T} (\Delta \psi) \eta^{0}\\
&= \displaystyle \int_{Q_T} s \lambda^2  |\nabla \eta^0|^2 \xi \psi \eta^{0}-   \int_{Q_T} s \alpha_t \psi\eta^{0}+  \int_{Q_T}a(t,x) \psi \eta^{0}\\
& \displaystyle \qquad +\int_{Q_T} s \lambda \Delta \eta^0 \xi \psi \eta^{0}.
\end{array}
\end{equation}
\normalsize
By the properties of $\eta^{0}$, we have
\begin{equation}
\label{SepInteta}
\int_{Q_T} s^{2} \lambda^2 |\nabla \eta^0|^2 \xi^2 \psi \eta^{0} \geq m \int_{Q_T} s^{2} \lambda^2 \xi^2 \psi \eta^{0} - m \int_{(0,T)\times\omega} s^{2} \lambda^2  \xi^2 \psi\eta^{0},
\end{equation}
where $m$ is defined in \eqref{defminm}.\\
\indent By combining \eqref{Integrateeta} and \eqref{SepInteta}, we have 
\small
\begin{equation}
\label{IntegrateBiseta}
\begin{array}{ll}
&\displaystyle m \int_{Q_T} s^{2} \lambda^2  \xi^2 \psi \eta^{0}- \int_{Q_T} 2s \lambda \xi (\nabla \eta^0 . \nabla \psi)  \eta^{0}+\int_{Q_T}  (\partial_t \psi) \eta^{0}+\int_{Q_T} (\Delta \psi) \eta^{0}\\
&\leq \displaystyle \int_{Q_T} s \lambda^2  |\nabla \eta^0|^2 \xi \psi \eta^{0} + \int_{Q_T} s |\alpha_t| \psi \eta^{0}+  \int_{Q_T}|a(t,x)| \psi \eta^{0}\\
& \displaystyle \qquad +\int_{Q_T} s \lambda |\Delta \eta^0|\xi \psi \eta^{0}+ m\int_{(0,T)\times\omega} s^{2} \lambda^2 \xi^2 \psi\eta^{0}.
\end{array}
\end{equation}
\normalsize
We have the following integration by parts
\small
\begin{align}
\label{IntByParts1eta}
\begin{array}{ll}
\displaystyle-\int_{Q_T} 2s \lambda \xi (\nabla \eta^0 . \nabla \psi)\eta^{0} =   \int_{Q_T} 2s \lambda \left((\nabla \xi . \nabla \eta^{0}) \eta^{0} \psi  + \xi( \Delta \eta^{0} )\eta^{0}\psi   +  \underbrace{\xi |\nabla \eta^0|^{2} \psi}_{\geq 0}\right).
\end{array}
\end{align}
\begin{equation}
\int_{Q_T}  (\partial_t \psi)\eta^{0}  =  \int_{\Omega} \eta^{0} (.)(\psi(T,.) - \psi(0,.) ) = 0,
\label{IntByParts2eta}
\end{equation}
\begin{equation}
\int_{Q_T} (\Delta \psi)\eta^{0} = \int_{Q_T} \psi \Delta\eta^{0}.
 \label{IntByParts3eta}
\end{equation}
\normalsize
From \eqref{IntegrateBiseta}, \eqref{IntByParts1eta}, \eqref{IntByParts2eta}, \eqref{IntByParts3eta} and the properties of $\eta^{0}$, we have
\small
\begin{equation}
\label{IntegrateTrieta}
\begin{array}{ll}
&\displaystyle m \int_{Q_T} s^{2} \lambda^2 \xi^2 \psi \eta^{0}  + 2m\int_{Q_T} s \lambda \xi \psi\\
&\leq \displaystyle \int_{Q_T} s \lambda^2  |\nabla \eta^0|^2 \xi \psi \eta^{0} + \int_{Q_T} s |\alpha_t| \psi \eta^{0}+  \int_{Q_T}|a(t,x)| \psi  \eta^{0}\\
& \displaystyle \qquad + 3\int_{Q_T} s \lambda |\Delta \eta^0|\xi \psi \eta^{0}+ 2\int_{Q_T} s \lambda|\nabla \xi| | \nabla \eta^{0}| \psi \eta^{0}+ \int_{Q_T} \psi |\Delta \eta^{0}|\\
&\displaystyle\qquad+m \int_{(0,T)\times\omega} s^{2} \lambda^2 \xi^2 \psi \eta^{0}+2m \int_{(0,T)\times\omega} s \lambda \xi \psi.
\end{array}
\end{equation}
\normalsize
The first five right hand side terms of \eqref{IntegrateTrieta} can be absorbed by the first left hand side term provided $s \geq s_1 (\lambda)$ as defined in \eqref{deflambd1s1Dir} (see ‘Step 2, Absorption’ of the proof of \Cref{CarlL1} for details: it is exactly the same mechanism as in the proof for the Neumann case). The sixth right hand side term of \eqref{IntegrateTrieta} can be absorbed by the second left hand side term provided $s \geq C(\Omega,\omega) T^{2}$. The two last right hand side terms of \eqref{IntegrateTrieta} are smaller than $\int_{(0,T)\times\omega} s^{2} \lambda^2 \xi^2 \psi$ provided $s \geq C(\Omega,\omega) T^{2}$. This leads to
$$ \int_{Q_T} s^{2} \lambda^2 \xi^2 \psi \eta^{0}  + \int_{Q_T} s \lambda \xi \psi \leq C \int_{(0,T)\times\omega} s^{2} \lambda^2 \xi^2 \psi,$$
which yields \eqref{carlDir} by dividing by $s \lambda$.
\end{proof}
From \Cref{CarlL1Dir}, we deduce a precise $L^2$-$L^1$ observability inequality as in \Cref{TheoObsL^2L^1} by using the second left hand side term of \eqref{carlDir}. It is an easy adaptation of \Cref{SectionL^2L^1}. \\
\indent The proof of the linear global nonnegative-controllability result as \Cref{TheoControlInfty} and the fixed-point argument (see \Cref{Fixedpointsection})  remain unchanged. This leads to the small-time global nonnegative controllability for \eqref{heatSLDir}.\\
\indent The proof of the large time global null-controllability result for \eqref{heatSLDir} follows the same lines as \Cref{sectionCorGlobal}. In particular, the comparison principle between the free solution and the solution to the ordinary differential equation, i.e., \eqref{CompareEDO} stays valid because $v(t) > 0$ on $(T_1,T_2)\times\partial\Omega$.

\section{Comments}

\subsection{Nonlinearities depending on the gradient of the state}

We do not treat semilinearties $F(y, \nabla y)$ as considered in \cite{FCGBGP2} (see also \cite{DoFCGBZ}) because the left hand side of the $L^1$-Carleman estimate \eqref{carl} established in \Cref{CarlL1} does not provide estimates on the gradient of the state. 

\subsection{Nonlinear reaction-diffusion systems}
We may wonder to what extent our main results, i.e., \Cref{TheoNeg} and \Cref{CorGlobal} for \eqref{heatSL}, can be adapted to the  $m \times m$ semilinear reaction-diffusion system
\begin{equation}
\label{Semilinear}
\forall 1 \leq i \leq m,\ \left\{
\begin{array}{l l}
\partial_t u_i - d_i \Delta u_{i} =f_i(u_1, \dots, u_m) + h_i 1_{\omega}  &\mathrm{in}\ (0,T)\times\Omega,\\
\frac{\partial u_i}{\partial n} = 0 &\mathrm{on}\ (0,T)\times\partial\Omega,\\
u_i(0,.)=u_{i,0} &\mathrm{in}\  \Omega,
\end{array}
\right.
\end{equation}
with $(d_1, \dots, d_m) \in (0,+\infty)^m$ and $(f_1, \dots, f_m) \in C^{1}(\R^{m};\R)^{m}$ satisfying
\begin{equation}
\label{f1f2Nul}
\forall i \in \{1, \dots,m\},\ f_i(0, \dots, 0) =0.
\end{equation}
\indent We assume that the nonlinearity is \textit{strongly quasi-positive}, i.e.,
\begin{equation}
\label{QuasiPos}
\forall u \in \R^{m},\ \forall i \neq j \in \{1, \dots,m\},\ \frac{\partial f_i}{\partial u_j}(u_1, \dots,u_m) \geq 0.
\end{equation}
and satisfies a ‘\textit{mass-control structure}’
\begin{equation}
\label{MassStructure}
\forall u \in [0,+\infty)^{m},\ \sum\limits_{i=1}^{m} f_i(u) \leq C \left(1+ \sum_{i=1}^{m} u_i\right).
\end{equation}
Lots of systems come naturally with the two properties \eqref{QuasiPos} and \eqref{MassStructure} in applications (see \cite[Section 2]{P}).\\
\indent We have the following global-nonnegative controllability result in small time.
\begin{theo}
\label{TheoNegSyst}
For each $f_i$, we assume that \eqref{finfty} holds for $\alpha \leq 2$. For every $T>0$, the system \eqref{Semilinear} is \textit{globally nonnegative-controllable} in time $T$.
\end{theo}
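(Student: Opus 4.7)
The plan is to adapt the scalar strategy of \Cref{TheoNeg} to the $m$-component system, exploiting the cooperative structure coming from quasi-positivity. Since $f_i(0,\dots,0)=0$, a fundamental-theorem-of-calculus argument along the segment $[0,u]$ gives
\[
f_i(u) = \sum_{j=1}^m g_{ij}(u)\, u_j,\qquad g_{ij}(u) := \int_0^1 \frac{\partial f_i}{\partial u_j}(\sigma u)\, d\sigma,
\]
and I would linearize \eqref{Semilinear} around any $z\in L^{\infty}(Q_T)^m$. Hypothesis \eqref{QuasiPos} then implies $g_{ij}(z)\geq 0$ for $i\neq j$, so that the linearized matrix $A(z) := -\bigl(g_{ij}(z)\bigr)_{ij}$ has nonpositive off-diagonal entries, i.e., the linearized system is cooperative. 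The transpose $A(z)^T$ shares this sign pattern, so the adjoint system is also cooperative, and componentwise maximum principles ensure that nonnegative final data for the adjoint remain componentwise nonnegative backward in time. This is the crucial analogue of the scalar fact $q_T\geq 0\Rightarrow q\geq 0$ used throughout \Cref{GlobalPosSection}.

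The first new ingredient is a vectorial version of the $L^1$ Carleman estimate of \Cref{CarlL1}. Writing the $i$-th adjoint equation as
\[
-\partial_t q_i - d_i \Delta q_i + a_{ii}(t,x)\,q_i = -\sum_{j\neq i} a_{ji}(t,x)\, q_j =: F_i(t,x),
\]
each $q_i \geq 0$ is a scalar parabolic solution with nonnegative source $F_i$ (since $-a_{ji}\geq 0$ and $q_j\geq 0$). I would repeat the five-step proof of \Cref{CarlL1} for every $q_i$ with the common weights $\alpha,\xi$, carrying the diffusion $d_i$ as a harmless multiplicative constant; the only new contribution is $\int_{Q_T} e^{-s\alpha}\psi_i F_i$ on the right-hand side of the analogue of \eqref{IntegrateTriBis}. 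Summing over $i$ and bounding $|a_{ji}|\leq C\norme{A(z)}_{L^{\infty}(Q_T)}$, the total coupling contribution is at most $C\norme{A(z)}_{L^{\infty}(Q_T)}\int_{Q_T} e^{-s\alpha}\sum_j q_j$, which is absorbed into the cumulative gain $\sum_i \int_{Q_T} s^2\lambda^2\xi^2 e^{-s\alpha} q_i$ exactly as in \eqref{Absorb3}, provided $s\geq s_1(\lambda)$ is taken with $\norme{a}_{L^{\infty}}$ replaced by $\norme{A(z)}_{L^{\infty}}$. The boundary cancellation of Step~5 is unchanged since the weights agree on $\partial\Omega$ and $\partial q_i/\partial n=0$ componentwise. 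The outcome is the system estimate
\[
\sum_i \int_{Q_T} e^{-s\alpha}\xi^2 q_i \,dx\,dt \leq C_1 \sum_i \int_{(0,T)\times\omega} e^{-s\alpha}\xi^2 q_i \,dx\,dt.
\]

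From this Carleman bound, an $L^2$-$L^1$ observability inequality for nonnegative adjoint data follows by the dissipativity and $L^p$-$L^q$ bootstrap of \Cref{SectionL^2L^1} applied componentwise. The Hilbert Uniqueness construction of \Cref{HumMethod}, carried out on the nonnegative cone of $L^2(\Omega;\R^m)$ with the penalized functional summed over components, then produces a control $h_z\in L^{\infty}(Q_T)^m$ steering $u_0$ to a componentwise nonnegative state with cost $\exp\!\bigl(C(1+\norme{A(z)}_{L^{\infty}}^{1/2})\bigr)\norme{u_0}_{L^2}$. The Kakutani-Leray-Schauder fixed point of \Cref{Fixedpointsection} transposes verbatim: the short-time scaling $T_z^\ast:=\min\bigl(T,\norme{A(z)}_{L^{\infty}}^{-1/2}\bigr)$ works because \eqref{finfty} applied to each $f_i$ yields $|g_{ij}(z)|^{1/2}\leq \varepsilon\log(2+|z|)+C_\varepsilon$, giving the same bootstrap for the $L^{\infty}$ a priori bound of the fixed-point set. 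The mass-control assumption \eqref{MassStructure} combined with quasi-positivity replaces \Cref{LemComp}: once the trajectory is componentwise nonnegative at time $T_z^\ast$, switching off the control keeps it so, and $U(t):=\sum_i u_i(t,\cdot)$ is dominated (via comparison) by the solution of the scalar ODE $\dot V = C(1+V)$, hence uniformly bounded on $[T_z^\ast,T]$.

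The main obstacle is the vectorial Carleman estimate, and more precisely the absorption of the coupling sources $F_i$ without losing the sharp $\norme{A(z)}_{L^{\infty}}^{1/2}$ exponent. Since off-diagonal couplings enter at the same order as the diagonal potential, treating each scalar Carleman inequality independently would waste a factor; one must sum over components so that the collective nonnegativity of the $q_j$ is played against the quadratic $s^2\lambda^2\xi^2$ gain in all $m$ components simultaneously. Once this summed Carleman estimate is established, the remaining observability, HUM and fixed-point steps are essentially line-by-line adaptations of the scalar argument.
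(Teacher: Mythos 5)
Your construction of the vectorial $L^1$ Carleman estimate is exactly the paper's route: the paper also treats each component as a scalar equation with the quasi-positive coupling moved to the source, uses the cooperative maximum principle to keep $q\geq 0$ componentwise, sums the $m$ scalar inequalities, and absorbs the coupling term $\norme{A}_{L^{\infty}}\int e^{-s\alpha}|\zeta|$ into the summed quadratic gain for $s\gtrsim T^2\norme{A}_{L^{\infty}}^{1/2}$. The observability, HUM and Kakutani steps are likewise line-by-line adaptations, as you say. The problem is your replacement for \Cref{LemComp}.

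You claim that after switching off the control, $U(t):=\sum_i u_i(t,\cdot)$ is dominated by the solution of $\dot V=C(1+V)$ ``via comparison,'' using \eqref{MassStructure}. This fails when the diffusion coefficients $d_i$ are not all equal, which \eqref{Semilinear} allows. Summing the equations gives
\begin{equation*}
\partial_t U-\sum_{i=1}^m d_i\Delta u_i=\sum_{i=1}^m f_i(u)\leq C(1+U),
\end{equation*}
and $\sum_i d_i\Delta u_i$ is not the Laplacian of any function of $U$, so no scalar maximum principle applies to $U$. Integrating in space (the diffusion terms vanish by the Neumann condition) only yields an $L^1(\Omega)$ bound on the total mass; upgrading mass control plus quasi-positivity to an $L^{\infty}$ bound is precisely the hard point of the theory of reaction--diffusion systems, and it is false in general (Pierre--Schmitt type counterexamples exhibit $L^{\infty}$ blow-up under exactly these two structural hypotheses). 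The paper avoids this by invoking the global-existence theorem of Fellner--Tang for quasi-positive, mass-controlled systems with at most (super-)quadratic nonlinearities --- applicable here because \eqref{finfty} with $\alpha\leq 2$ forces growth far below quadratic --- to conclude that the free nonnegative solution on $[T_z^{*},T]$ stays bounded. You need either to import such a result, or to restrict to equal diffusions $d_1=\dots=d_m$, for your comparison argument to close; note also that the fixed-point step (4) needs the resulting bound $\norme{u}_{L^{\infty}((T_z^{*},T)\times\Omega)}\leq C\norme{u}_{L^{\infty}(Q_{T_z^{*}})}$ with controlled dependence on the data, so the form of the global bound matters, not just its existence.
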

\begin{app}
Let $\alpha \in (0,2)$. The system
\begin{equation}
\left\{
\begin{array}{l l}
\partial_t u -  \Delta u =-u\log^{\alpha}(2+|u|)+ h_1 1_{\omega}  &\mathrm{in}\ (0,T)\times\Omega,\\
\partial_t v -  \Delta v =u\log^{\alpha}(2+|u|)+ h_2 1_{\omega}  &\mathrm{in}\ (0,T)\times\Omega,\\
\frac{\partial u}{\partial n} = \frac{\partial v}{\partial n} = 0 &\mathrm{on}\ (0,T)\times\partial\Omega,\\
(u,v)(0,.)=(u_0,v_0) &\mathrm{in}\  \Omega,
\end{array}
\right.
\label{Syst2x2}
\end{equation}
is globally nonnegative-controllable for every time $T>0$.
\end{app}
\begin{proof}
As the proof is very similar to that of \Cref{TheoNeg}, we limit ourselves to pointing out only the differences.\\
\indent \textbf{Difference 1: A $L^1$-Carleman estimate for a linear parabolic system.} Let $A \in L^{\infty}(Q_T; \R^{m\times m})$ be such that 
\begin{equation}
\label{QuasiPosMatrix}
\forall i \neq j \in \{1, \dots, m\},\ \text{a.e.}\ (t,x) \in Q_T,\ A_{i,j}(t,x) \geq 0.
\end{equation}
\begin{rmk}
The condition \eqref{QuasiPosMatrix} is satisfied by the linearized system of \eqref{Semilinear} around $(0,0)$ thanks to \eqref{QuasiPos}.
\end{rmk}
We consider the adjoint system
\begin{equation}
\left\{
\begin{array}{l l}
-\partial_t  \zeta - \Delta \zeta =A(t,x)\zeta  &\mathrm{in}\ (0,T)\times\Omega,\\
\frac{\partial \zeta}{\partial n}= 0 &\mathrm{on}\ (0,T)\times\partial\Omega,\\
\zeta(T,.)=\zeta_T& \mathrm{in}\ \Omega.
\end{array}
\right.
\label{eqlinaAdjSyst}
\end{equation}
Our goal is to establish this $L^1$-Carleman inequality: for every $\zeta_T \in L^2(\Omega;\R^{+})^m$, the nonnegative solution $\zeta$ of \eqref{eqlinaAdjSyst} satisfies
\begin{align}
\label{carlSyst}
\sum\limits_{i=1}^{m} \int_{Q_T} e^{-s\alpha}\xi^2 \zeta_i dx dt\leq C(\Omega,\omega)\left(\sum\limits_{i=1}^{m}  \int_{(0,T)\times\omega} e^{-s\alpha} \xi^2 \zeta_i  dxdt\right),
\end{align}
for any $\lambda \geq 1$, $s \geq s_1(\lambda):=C(\Omega,\omega)e^{4\lambda \norme{\eta^0}_{\infty}}\left(T+T^{2}+T^2 \norme{A}_{L^{\infty}(Q_T; \R^{m\times m})}^{1/2}\right)$.\\
\indent In order to prove \eqref{carlSyst}, we first remark that the nonnegativity of $\zeta$ comes from \eqref{QuasiPosMatrix} (see \cite[Chapter 3, Theorem 13]{ProtterWein}). Then, by applying the same proof strategy to each line of \eqref{eqlinaAdjSyst} as performed in \Cref{CarlL1} and by forgetting for the moment the terms involving $A_{i,j}(t,x)\zeta_j$, we get
\begin{align}
\label{carlSystInt}
\sum\limits_{i=1}^{m} \int_{Q_T} e^{-s\alpha}\lambda^{2} (s \xi)^2 \zeta_i dx dt&\leq C(\Omega,\omega)\Bigg(\norme{A}_{L^{\infty}(Q_T)}  \int_{Q_T} e^{-s\alpha} |\zeta| dx dt\\
&\qquad+\sum\limits_{i=1}^{m}  \int_{(0,T)\times\omega} e^{-s\alpha} \lambda^{2}(s \xi)^2\zeta_i  dxdt\Bigg),\notag
\end{align}
for $\lambda \geq 1$, $s \geq C(\Omega,\omega)e^{4\lambda \norme{\eta^0}_{\infty}}\left(T+T^{2}\right)$. We conclude the proof of \eqref{carlSyst} by absorbing the first right hand side term of \eqref{carlSystInt} provided $s \geq C(\Omega,\omega) T^{2} \norme{A}_{L^{\infty}(Q_T)}^{1/2}$.\\
\indent \textbf{Difference 2: Without control, the free solution associated to a nonnegative initial data of \eqref{Semilinear} stays nonnegative and remains bounded.} An adaptation of \Cref{LemComp} to the system \eqref{Semilinear} holds true. But, the reason is different. It comes from \cite[Theorem 1.1]{FeTa} which ensures global existence of classical solutions associated to nonnegative initial data for nonlinear reaction-diffusion systems with semilinearities satisfying \eqref{QuasiPos}, \eqref{MassStructure} and a (super)-quadratic growth (see also \cite{SoP} under an additional structure assumption, the so-called dissipation of entropy).
\begin{rmk}
It is worth mentioning that if the nonlinearities of \eqref{Semilinear} are bounded in $L^1(Q_T)$ for all $T>0$ (which is the case of \eqref{Syst2x2} for instance), then the solutions exist globally because the growth of the semilinearity $(f_i)_{1 \leq i \leq m}$ is less than $|u|^{\frac{N+2}{N}}$ (see \cite[Section 1]{P}).
\end{rmk}
This concludes the proof of \Cref{TheoNegSyst}.
\end{proof}
\indent In the following result, we give a sufficient condition to ensure the global null-controllability of \eqref{Semilinear}.
\begin{theo}
\label{CorGlobalSyst}
Let $\alpha \in (1,2)$. For each $f_i$, we assume that \eqref{finfty} holds with $\alpha$ and 
\begin{equation}
 \exists C >0,\ \forall r \in [0,+\infty)^{m},\ \sum\limits_{i=1}^{m} f_i(r) \leq -C \left(\sum\limits_{i=1}^{m} r_i\right) \log^{\alpha}\left(2+\left(\sum\limits_{i=1}^{m} r_i\right)\right).
\label{AssCorGlobalSyst}
\end{equation}
Then, there exists $T$ sufficiently large such that \eqref{Semilinear} is globally null-controllable in time $T$.
\end{theo}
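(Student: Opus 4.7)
The plan is to mirror the three-step proof of \Cref{CorGlobal}, adapting each step to the system setting.

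\textbf{Step 1.} I would invoke \Cref{TheoNegSyst} to produce a control $h = (h_1,\dots,h_m) \in L^{\infty}(Q_{T_1})^m$ on $T_1 := 1$ such that the solution $u$ of \eqref{Semilinear} satisfies $u_i(T_1,\cdot) \geq 0$ for every $i$.

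\textbf{Step 2.} On $(T_1, T_2)$ I would set $h \equiv 0$. The quasi-positivity \eqref{QuasiPos} together with \eqref{f1f2Nul} and a component-wise comparison argument keep $u(t,\cdot) \geq 0$. Integrating the equations over $\Omega$ (the Neumann boundary condition kills the Laplacian terms) and using \eqref{AssCorGlobalSyst} yields
\[
\frac{d}{dt}\int_{\Omega} S(t,\cdot)\,dx \leq -C \int_{\Omega} S \log^{\alpha}(2+S)\,dx, \qquad S(t,\cdot) := \sum_{i=1}^{m} u_i(t,\cdot).
\]
Since $r \mapsto r\log^{\alpha}(2+r)$ is convex on $[0,+\infty)$, Jensen's inequality implies that the spatial mean $\bar S(t) := |\Omega|^{-1}\int_{\Omega} S(t,\cdot)\,dx$ is a subsolution of the scalar ODE $\dot v = -C v \log^{\alpha}(2+v)$. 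Since $\alpha > 1$, the function $\sigma \mapsto 1/(\sigma\log^{\alpha}(2+\sigma))$ is integrable at $+\infty$, so the same diffeomorphism argument as in the proof of \Cref{CorGlobal} yields $\bar S(t) \leq G(t-T_1)$ for some decreasing function $G$ independent of $\bar S(T_1)$ with $G(t) \to 0$ as $t \to +\infty$. To promote this $L^{1}$-type decay to $L^{\infty}$ decay, I would invoke the uniform-in-time $L^{\infty}$ bounds from \cite{FeTa} (which apply because \eqref{finfty} places us in the sub-quadratic regime, and the mass-dissipation \eqref{AssCorGlobalSyst} strengthens \eqref{MassStructure}), and then use parabolic smoothing via Duhamel's formula to convert the decay of $\int_{\Omega} S$ into the decay of $\|u(t,\cdot)\|_{L^{\infty}(\Omega)^{m}}$. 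Choosing $T_2$ large enough, this ensures $\|u(T_2,\cdot)\|_{L^{\infty}} \leq \delta$, where $\delta$ is the local null-controllability threshold.

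\textbf{Step 3.} Having reduced to small data, I would apply the local null-controllability of \eqref{Semilinear} in time $1$ on $(T_2, T_2+1)$: this holds by the Fursikov--Imanuvilov null-controllability of the linearized system around $0$ (which is trivial since we have one control per component) combined with the $L^{\infty}$ perturbation method of \cite{AT}. Concatenating the three controls produces the desired global null-control, with total time $T := T_2 + 1$ that does not depend on $u_0$.

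The \emph{main obstacle} is Step 2. In the scalar case of \Cref{CorGlobal} one compares the free solution pointwise with the solution of the underlying ODE via the parabolic maximum principle. For systems with differing diffusivities $d_i$ the sum $S$ does not satisfy a single parabolic equation, so the pointwise comparison breaks down. One must instead work at the level of the spatial average and then upgrade the mean decay to an $L^{\infty}$ decay, which crucially requires the global existence/regularity theory for reaction-diffusion systems with quasi-positive and dissipative mass structure in the sub-quadratic regime.
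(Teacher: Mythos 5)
Your Steps 1 and 3 coincide with the paper's. The divergence, and the problem, is in Step 2. You assert that ``for systems with differing diffusivities $d_i$ the sum $S$ does not satisfy a single parabolic equation, so the pointwise comparison breaks down,'' and you therefore retreat to the spatial average. This misdiagnoses the situation: the paper never compares $S=\sum_i u_i$ with an ODE. It compares the \emph{vector} $(u_1,\dots,u_m)$ componentwise with the solution $v=(v_1,\dots,v_m)$ of the ordinary differential \emph{system} \eqref{EDOSyst}, using the comparison principle for quasi-monotone (cooperative) parabolic systems, which is exactly what the strong quasi-positivity \eqref{QuasiPos} licenses (the paper cites \cite[Theorem 12.2.1]{WYW} and \cite[Chapter 8, Theorem 3.1]{Pao}). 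Since each $v_i$ is constant in space, $d_i\Delta v_i=0$ for every $i$, so the differing diffusivities are harmless. One then sums the ODE system, applies \eqref{AssCorGlobalSyst} to $W:=\sum_i v_i$, and the scalar argument of \Cref{CorGlobal} gives $0\le u_i(t,x)\le v_i(t)\le W(t)\le F^{-1}(t-T_1)$, a pointwise $L^\infty$ bound that is independent of $u(T_1,\cdot)$ and tends to $0$.

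Your substitute route has a genuine gap precisely where this data-independence is needed. The mean-value inequality $\dot{\bar S}\le -C\,\bar S\log^{\alpha}(2+\bar S)$ obtained via Jensen is fine and does give a data-independent $L^1$ decay. But the upgrade to $L^\infty$ via Duhamel is not innocuous: for $N\ge 2$ the $L^1$--$L^\infty$ kernel bound $(t-\tau)^{-N/2}$ is not integrable, so one must iterate through a scale of $L^{r_k}$ spaces, and at each step the nonlinear term is estimated by $\|f_i(u(\tau))\|_{L^{r_k}}\lesssim\log^{\alpha}\bigl(2+\|u(\tau)\|_{L^\infty}\bigr)\,\|u(\tau)\|_{L^{r_k}}$. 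The factor $\log^{\alpha}\bigl(2+\|u\|_{L^\infty}\bigr)$ is controlled only by the a priori bounds of \cite{FeTa}, which depend on $\|u(T_1,\cdot)\|_{L^\infty}$, i.e., on the initial datum. The resulting $L^\infty$ bound at time $T_2$ therefore degrades with the size of the data, and you cannot choose $T_2$ (hence $T$) uniformly in $u_0$ --- which is the whole point of the large-time argument. Unless you first establish a data-independent $L^\infty$ bound at some fixed time $t>T_1$ (which is exactly what the vector ODE comparison delivers for free), this step does not close. I would recommend reinstating the componentwise comparison \eqref{CompareEDOSyst}.
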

\begin{app}
Let $\alpha \in (1,2)$. There exists $T>0$ such that the system
$$
\left\{
\begin{array}{l l}
\partial_t u -  \Delta u =-u\log^{\alpha}(2+|u|+|v|)+ h_1 1_{\omega}  &\mathrm{in}\ (0,T)\times\Omega,\\
\partial_t v -  \Delta v =-v\log^{\alpha}(2+|u|+|v|)+ h_2 1_{\omega}  &\mathrm{in}\ (0,T)\times\Omega,\\
\frac{\partial u}{\partial n} = \frac{\partial v}{\partial n} = 0 &\mathrm{on}\ (0,T)\times\partial\Omega,\\
(u,v)(0,.)=(u_0,v_0) &\mathrm{in}\  \Omega,
\end{array}
\right.
$$
is globally null-controllable in time $T>0$.
\end{app}
\begin{proof}
As the proof is very similar to that of \Cref{TheoNeg}, we omit the details.\\
\indent The first step consists in steering the initial data to a nonnegative state in time $T_1:=1$. This is possible thanks to \Cref{TheoNegSyst}. After that, we use the following comparison principle between $u$, the solution to 
\begin{equation*}
\forall 1 \leq i \leq m,\ \left\{
\begin{array}{l l}
\partial_t u_i - d_i \Delta u_{i} =f_i(u_1, \dots, u_m)  &\mathrm{in}\ (T_1,T_2)\times\Omega,\\
\frac{\partial u_i}{\partial n} = 0 &\mathrm{on}\ (T_1,T_2)\times\partial\Omega,\\
u_i(T_1,.)=u_{i,T_1} &\mathrm{in}\  \Omega,
\end{array}
\right.
\end{equation*}
and $v$, the nonnegative (global) solution to the ordinary differential system
\begin{equation}
\label{EDOSyst}
\forall 1 \leq i \leq m,\ \left\{
\begin{array}{l l}
\dot{v_i}(t)  =  - f_i(v(t))&\mathrm{in}\ (T_1,+\infty),\\
v_i(T_1)=\norme{u_{i,T_1}}_{L^{\infty}(\Omega)}+1&,
\end{array}
\right.
\end{equation}
that is to say
\begin{equation}
\label{CompareEDOSyst}
\forall i \in \{1, \dots, m\},\ \forall t \in  [T_1,T_2], \ \text{a.e.}\ x \in \Omega,\ 0 \leq u_i(t,x) \leq v_i(t).
\end{equation}
This comes from the quasi-monotone nondecreasing of $(f_i)_{1 \leq i \leq m}$ which is a consequence of \eqref{QuasiPos} (see \cite[Theorem 12.2.1]{WYW} or also \cite[Chapter 8, Theorem 3.1]{Pao}).\\
\indent Then, by using \eqref{AssCorGlobalSyst}, \eqref{EDOSyst}, \eqref{CompareEDOSyst} and the arguments of the step $2$ of the proof of \Cref{CorGlobal}, we readily get 
\begin{equation*}
\label{CompareEDOBisSyst}
\forall i \in \{1, \dots, m\},\ \text{a.e.}\ x \in \Omega,\ 0 \leq u_i(T_2,x) \leq \delta,
\end{equation*}
where $T_2$ is chosen sufficiently large and $\delta> 0$ is the radius of the ball of $L^{\infty}(\Omega)^m$ centered at $0$ where the local null-controllability of \eqref{Semilinear} holds in time $T=1$ (see for instance \cite[Theorem 1.1]{FCGBT} and the small $L^{\infty}$ perturbations method).\\
\indent Then, one can steer $u(T_2,.)$ to $0$ with an appropriate choice of the control.
\end{proof}
Another interesting problem could be to determine if \Cref{TheoNegSyst} and \Cref{CorGlobalSyst} can be generalized with fewer controls than equations in \eqref{Semilinear}. The usual strategy of Luz de Teresa to ‘eliminate controls’ in a linear parabolic system (see \cite{dTInsen} or \cite[Theorem 4.1]{AKBGBT}) seems to be difficult to implement because the Carleman inequality in $L^1$ (see \Cref{CarlL1}) only provide estimates on the function (and not on its partial derivatives in time and space).\\ 

\textbf{Acknowledgments.}
I would like to very much thank Karine Beauchard and Michel Pierre (Ecole Normale Supérieure de Rennes) for many fruitful, stimulating discussions, helpful advice. I am also grateful to Frédéric Marbach (Ecole Normale Supérieure de Rennes) for suggesting me to look at \cite[Lemma 9]{Coron-Op} and to Sylvain Ervedoza (Institut de Mathématiques de Toulouse) for reading a preliminary draft of this paper and for pointing me the two references \cite{Lale} and \cite{LiYau}.

\bibliographystyle{plain}
\small{\bibliography{bibliordnonlin}}

\end{document}